\numberwithin{equation}{section}
\theoremstyle{plain}
\newtheorem{theorem}{Theorem}[section]
\newtheorem{proposition}[theorem]{Proposition}
\newtheorem*{proposition*}{Proposition}
\newtheorem*{theorem*}{Theorem}
\newtheorem{lemma}[theorem]{Lemma}
\newtheorem*{lemma*}{Lemma}
\theoremstyle{remark}
\newtheorem{definition}[theorem]{Definition}
\newtheorem{remark}[theorem]{Remark}
\newtheorem{question}{Question}
\newcommand*{\dif}{\ensuremath{\mathop{}\!\mathrm{d}}}
\def\E{\mathbb{E}}
\def\P{\mathbb{P}}
\begin{document}

\begin{frontmatter}


\title{Double jump  in the maximum of  two-type reducible branching Brownian motion}
\runtitle{Double jump  in the maximum of  two-type reducible BBM}

\begin{aug}
\author[A]{\inits{M.}\fnms{Heng}~\snm{Ma}\ead[label=e1]{hengmamath@gmail.com} \orcid{0000-0002-1401-7677}}
\author[B]{\inits{R.}\fnms{Yan-Xia}~\snm{Ren}\ead[label=e2]{yxren@math.pku.edu.cn}}
\address[A]{School of Mathematical Sciences, Peking
University, Beijing, P.R. China.\printead[presep={,\ }]{e1}}
\address[B]{LMAM School of Mathematical Sciences \& Center for
Statistical Science, Peking
University, Beijing, P.R. China.\printead[presep={,\ }]{e2}}
\end{aug}

\begin{abstract}
    Consider a two-type reducible branching Brownian motion in which particles' diffusion coefficients and  branching rates  are influenced by their types.
    Here reducible means that type $1$  particles can produce particles of  type $1$ and type $2$, but
    type $2$ particles can only produce particles of type $2$.
    The maximum of this process is determined by two parameters: the ratio of the diffusion coefficients and the ratio of the branching rates for particles of different types.
    Belloum and Mallein [Electron. J. Probab. \textbf{26} (2021), no. 61]
    identified three phases of the maximum and the extremal process, corresponding to three regions in the parameter space.

    We investigate how the extremal process behaves
    asymptotically when the parameters lie on the boundaries between these regions.
    An interesting phenomenon is that  a double jump occurs in the maximum when the parameters cross the boundary of  the so called  anomalous spreading region,
    while only single jump occurs when the parameters cross the boundary between the remaining two regions.
\end{abstract}

\begin{abstract}[language=french]
 Considérons un mouvement brownien branchant à deux types, réductible, dans lequel les coefficients de diffusion et les taux de branchement des particules dépendent de leur type. Ici, “réductible” signifie que les particules de type $1$ peuvent produire des particules de type $1$ et de type $2$, alors que les particules de type $2$ ne peuvent produire que des particules de type $2$. Le maximum de ce processus est déterminé par deux paramètres : le rapport des coefficients de diffusion et le rapport des taux de branchement pour les particules de différents types. Belloum et Mallein [Electron. J. Probab. \textbf{26} (2021), no. 61] ont identifié trois phases du maximum et du processus extrémal, correspondant à trois régions dans l’espace des paramètres.

 Nous étudions le comportement asymptotique du processus extrémal lorsque les paramètres se situent sur les frontières entre ces régions.  Un phénomène intéressant est qu’un double saut se produit dans le maximum lorsque les paramètres franchissent la frontière de la région dite de diffusion anormale, tandis qu’un simple saut se produit lorsque les paramètres franchissent la frontière entre les deux autres régions.
\end{abstract}

\begin{keyword}[class=MSC]
\kwd[Primary ]{60J80}
\kwd{60G55}
\kwd[; secondary ]{60G70; 92D25}
\end{keyword}

\begin{keyword}
\kwd{branching Brownian motion}
\kwd{double jump}
\kwd{extremal process}
\kwd{reducible branching
process}
\kwd{Brownian motion}
\end{keyword}

\end{frontmatter}


\section{Introduction}
Over the last few years, many people have studied the extreme values of the so-called \textit{log-correlated}  fields, which form a large universality class for the distributions of extreme values of correlated stochastic processes.
One of the simple models in this  class is  the
branching Brownian motion (BBM), which can be described as follows.
Initially we have a particle moving as a standard Brownian motion.
At rate $1$ it splits into two particles.
These particles behave independently of each other, continue to move and split,  subject to the same rule.

Due to the presence of a tree structure and Brownian trajectories, many precise results for the extreme values of BBM were obtained.
Bramson \cite{Bramson78,Bramson83} gave
the correct order of the maximum for BBM and the convergence in law of the centered maximum.  Lalley and Sellke \cite{LS87}
obtained a probabilistic representation of the limit distribution.
A remarkable contribution for the extreme value statistics is the construction of the limiting \textit{extremal process} for BBM, obtained in Arguin, Bovier and Kistler  \cite{ABK13}, as well as in   A\"{i}d\'{e}kon, Berestycki, Brunet and Shi \cite{ABBS13}.
With motivation from disorder system \cite{BI04a,BI04b},  serval works studied the extreme value for variable speed BBMs
see, for examples,  \cite{BH14,BH15,FZ12,MZ16}.
Many results on BBM were extended to branching random walks  \cite{Aidekon13,Madaule17},  and other log-correlated fields, such as $2$-dimensional discrete Gaussian free fields
 \cite{BL16,BL18,BDZ16,BZ12},
 log-correlated Gaussian fields on
$d$-dimensional  boxes \cite{Acosta14,DRZ17,Madaule15},
and high-values of the Riemann zeta-function \cite{ABBRS19,ABH17,ADH21}. For recent reviews  see, e.g. \cite{Arguin16,BK22}.

This article  concerns the extreme values of a \textit{multi-type} branching Brownian motion,
in which particles of different types have
different branching mechanisms and diffusion coefficients.
Just like in the case of Markov chains,
we say a multi-type branching Brownian motion is  \textit{reducible}
if particles of some type $i$ can not have a descendant
of some other type $j$; and otherwise call it \textit{irreducible}.
For irreducible multi-type BBMs (with a common diffusion coefficient for all types),
the spreading speed was given by Ren and Yang \cite{RYS14}, and recently Hou, Ren and Song \cite{HRS23}
obtained the precise order of the maximum and  the limiting extremal process.
For reducible multi-type branching random walks (BRWs),
Biggins \cite{Biggins10,Biggins12} studied the leading coefficient of the maximum and  found that in some cases
the processes exhibit the so-called \textit{anomalous spreading} phenomenon.
More precisely, the leading coefficient of the maximum for a multi-type BRW is larger than that of a BRW consisting only of particles of a single type.
Holzer \cite{Holzer14,Holzer16} extended  results of Biggins to the  BBM setting, by  studying the associated system of F-KPP equations.

Belloum and Mallein  \cite{BM21}  studied
the  extremal process of  a two-type reducible BBM and obtained in particular the precise order of the maximum.
One can construct this process
by first running a BBM with branching rate $\beta$ and diffusion coefficient $\sigma^2$ (type $1$ BBM), and then adding standard BBMs (type $2$ BBMs) along each $(\beta,\sigma^2)$ BBM path according to a Poisson process.
There are three distinct regimes: type $1$/type $2$ domination, and anomalous spreading, corresponding to the parameters  $(\beta, \sigma^2)$ belonging to $\mathscr{C}_{I}, \mathscr{C}_{II},$ and $\mathscr{C}_{III}$, respectively (see Figure \ref{fig-phase}).
However when the parameters $(\beta,\sigma^2)$ are on the
boundaries between these three sets, the precise order of the maximum and the behavior of  extremal process are not clear, except  the common intersection of these boundaries which
was studied by Belloum \cite{Belloum22}.

In this article, we study the asymptotic behavior of extremal particles in the  two-type reducible BBM above when the
parameters $(\beta, \sigma^2)$ lie  on the boundaries between   $\mathscr{C}_{I}, \mathscr{C}_{II},\mathscr{C}_{III}$.
We show that the extremal process converges in law towards a decorated Poisson point process and  give the precise order of the maximum.
Combined with the main results in
 \cite{Belloum22, BM21},
the phase diagram of the two-type reducible BBM  is now complete and clear.
As an interesting by-product,  a \textit{double jump} occurs  in the maximum of the two-type reducible BBM when
the parameters $(\beta,\sigma^2)$ cross the boundary of the anomalous spreading region $\mathscr{C}_{III}$,
and only single jump occurs when the parameters cross the boundary between $\mathscr{C}_{I}, \mathscr{C}_{II}$.

\subsection{Standard branching Brownian motion}
Let  $\{ ( \mathsf{X}_{u}(t), u \in \mathsf{N}_{t})_{t \geq 0}, \mathsf{P} \}$  be a standard BBM,
where $\mathsf{N}_{t}$ represents the set of all particles alive at time $t$,  $\mathsf{X}_u(t)$ denotes the position of individual $u \in \mathsf{N}_{t}$, and $\mathsf{P}$ is the corresponding law of the BBM.
Let $\mathsf{M}_t=\max_{u \in \mathsf{N}_{t}}\mathsf{X}_{u}(t)$ be  the maximal displacement among all the particles alive at time $t$.
Bramson \cite{Bramson78,Bramson83}
obtained an explicit asymptotic formula of $\mathsf{M}_{t}$:
If  let $m_{t}:= \sqrt{2} t - \frac{3}{2\sqrt{2}}\log t $, then $(\mathsf{M}_{t}-m_{t}:t>0)$ converges weakly, and the cumulative distribution function of the limit distribution is the unique (up to transition) travelling wave solution of a certain F-KPP equation.
Lalley and Sellke \cite{LS87} improved this result and they proved that $\mathsf{M}_{t}-m_{t}$ converges weakly to  a random shift of the Gumbel distribution. Specifically,
they   showed that for some constant $C_{\star}>0$,
\begin{equation}\label{eq-Gumbel}
  \lim_{t \to \infty}
  \mathsf{P}( \mathsf{M}_{t}-m_{t} \leq x )
  =  \mathsf{E}[ \exp\{-  C_{\star} \mathsf{Z}_{\infty} e^{-\sqrt{2}x}\} ],
\end{equation}
where $\mathsf{Z}_\infty>0 $ is the a.s. limit of the  \textit{derivative martingale} $(\mathsf{Z}_{t})_{t > 0}$ defined by  $\mathsf{Z}_{t}=\sum\limits_{u \in \mathsf{N}_t}\left(\sqrt{2} t-\mathsf{X}_u(t)\right) e^{\sqrt{2} \mathsf{X}_u(t)-2 t} $.
The name derivative martingale comes from the fact
that $\mathsf{Z}_{t}= -\frac{\partial}{\partial \lambda}|_{\lambda = \sqrt{2}}\mathsf{W}_t(\lambda) $,
where $\mathsf{W}_t(\lambda):=\sum\limits_{u \in \mathsf{N}_t} e^{\lambda X_u(t)-\left(1+\frac{\lambda^2}{2}\right)t}$ is called the \textit{additive martingale} for BBM.
A\"{i}d\'{e}kon \cite{Aidekon13} proved the convergence in law of the centered maximum of branching random walks by probabilistic method.

After the maximum of the process was known,
many researches
focused on the full extreme value statistics for BBM, which can be encoded by the following point process, called \textit{extremal process}
\begin{equation*}
  \mathcal{E}_t:=\sum_{u \in \mathsf{N}_t} \delta_{\mathsf{X}_{u}(t)-m_{t}}.
\end{equation*}
It was proved independently by Arguin, Bovier, Kistler  \cite{ABK13}, and A\"{i}d\'{e}kon, Berestycki, Brunet, Shi \cite{ABBS13}   that   $\mathcal{E}_{t}$ converges in law to a random shifted \textit{decorated Poisson point process} (DPPP for short)   defined in the following paragraph. Bovier and Hartung \cite{BH17}
extended  the convergence of the extremal process by adding an extra dimension that encodes the localization of the particle in the underlying Galton-Watson tree. We refer to  \cite{CHL19} for  further results on the structure of the extremal process; and to \cite{BD11} for some conjectures, many of them still have not been rigorously proven  yet.

A DPPP $\mathcal{E}$ is determined by an intensity measure $\mu$ and a decoration process $\mathfrak{D}$, where $\mu$ is a (random) measure on $\mathbb{R}$ and $\mathfrak{D}$ is the law of a random point process on $\mathbb{R}$. Conditioned on $\mu$, sampling a Poisson point process $\sum_{i} \delta_{x_{i}} $ with intensity $\mu$, and  an independent family of i.i.d. point processes $\left( \sum_{j} \delta_{d^{i}_{j}} :i \geq 0\right)$ with law $\mathfrak{D}$, then the point measure $\mathcal{E} \sim \mathrm{DPPP}(\mu,\mathfrak{D})$ can be constructed as $
\mathcal{E}=\sum_{i,j}   \delta_{x_i+d_j^i}$.

Using this notation, the main result in
 \cite{ ABBS13} and  \cite{ABK13} can be stated as:
\begin{equation}
  \label{eq-converge-to-DPPP}
   \lim_{t \to \infty} \sum_{u \in \mathsf{N}_t} \delta_{\mathsf{X}_{u}(t)-m_{t}}    =\operatorname{DPPP}\left(\sqrt{2} C_{\star} \mathsf{Z}_{\infty} \mathrm{e}^{-\sqrt{2} x} \mathrm{~d} x, \mathfrak{D}^{\sqrt{2}}\right) \text{ in law in the vague topology}.
   \footnote{
   That is to say,
   for every continuous test function $\phi$ with compact support, $ \int \phi \dif \mathcal{E}_{t} $ converges weakly to $\int \phi \dif \mathcal{E}$. In this paper  when dealing with the weak convergence of point process, we always assume that we are using this setting.}
  \end{equation}
The decoration law $\mathfrak{D}^{\sqrt{2}}$ belongs to the family $\left(\mathfrak{D}^{\varrho}, \varrho \geq \sqrt{2}\right)$, defined as the limits  of the ``gap processes" conditioned on $\mathsf{M}_t \geq   \varrho t $:
\begin{equation}\label{eq-decoration-process}
\mathfrak{D}^{\varrho}(\cdot):=\lim _{t \rightarrow \infty}
\mathsf{ P}
 \left(\sum_{u \in \mathsf{N}_t} \delta_{\mathsf{X}_{u}(t)-\mathsf{M}_t } \in \cdot \mid \mathsf{M}_t \geq   \varrho t\right).
\end{equation}
The existence of this limit is established in \cite[Theorem 3.4]{ABK13} for $\rho=\sqrt{2}$ and in \cite[Proposition 7.5]{BH14}  for $\rho> \sqrt{2}$. See also  \cite{BBCM22} for an  alternative proof using the spine decomposition techniques. In \cite{BBCM22},   these point processes $(\mathfrak{D}^{\rho}:\rho \geq \sqrt{2})$ were used as decorations in the extremal processes of $2$-speed BBMs.

\subsection{Two-type reducible branching Brownian motion}

Now we give the definition of a two-type reducible branching Brownian motion, which is the model we are going to study in this paper.
The difference between our two-type reducible BBM and the standard BBM is that in our two-type BBM,
each particle now has a type and the branching and movement depend on the type.
Specifically, type $1$ particles move according to a Brownian motion with diffusion coefficient $\sigma^2$. They branch at rate $\beta$ into two children of type $1$ and give birth to particles of type $2$ at rate $\alpha$.
Type $2$ particles move according to a standard Brownian motion and branch at rate $1$ into $2$ children of type $2$, but can not give birth to offspring of type $1$.
We use $N_t$ to represent  all particles alive at time $t$, as well as  $N_t^1$ and $N_t^2$
for particles of type $1$ and type $2$ alive at time $t$ respectively.
For $u \in N_{t}$ and $s \leq t$, let $X_u(s)$ be the position of the ancestor at time $s$ of particle $u$.
So we write $\{(X_u(t), u \in N_{t})_{t\geq 0}, \mathbb{P}\}$  for a two-type reducible BBM and    $M_{t}:= \max\limits_{u \in N_{t}} X_{u}(t)$ for its maximum.

As mentioned at the beginning of the paper,  Biggins  \cite{Biggins10,Biggins12}  found that
anomalous spreading may occur for multi-type reducible BRWs.
Belloum and Mallein \cite{BM21} studied  more details on the precise order of maximum  and extremal process for this two-type BBM. Especially in the case when anomalous spreading occurs,
they showed that the extremal
process, formed by type $2$ particles at time $t$, converges towards a DPPP weakly.

To describe the limiting extremal process  in the form of \eqref{eq-converge-to-DPPP},
we  introduce the  additive and   derivative martingales of type $1$ particles.
Note that particles $\{ X_u(t): u \in N^{1}_{t} \}$ of type $1$  alone
have the same law as  the BBM   with branching rate $\beta$ and diffusion coefficient $\sigma^2$, which is denoted
by $(\mathsf{X}_{u}^{\beta,\sigma^2}(t):u \in \mathsf{N}_{t})_{t\geq 0}$ (here $\mathsf{N}_{t}$ is a slight abuse of notation as it depends also on $\beta$). We write $\mathsf{X}$ for the standard BBM  $\mathsf{X}^{1,1}$.
  Using the scaling property of Brownian motion, we have
\begin{equation}\label{eq-BBMscaling}
  \left( \mathsf{X}_{u}^{\beta,\sigma^2}(t) :u \in \mathsf{N}_{t} \right)\overset{law}{=} \left( \frac{\sigma}{\sqrt{\beta}} \mathsf{X}_{u}(\beta t) : u \in \mathsf{N}_{\beta t} \right).
\end{equation}
So, by the results for standard BBM,
 the corresponding derivative martingale of type $1$ individuals  and its a.s. limit are given by
\begin{equation}\label{eq-def-martingale-type1}
   Z^{(1)}_{t}  :=   \sum_{u \in N_t^1} \left(\sqrt{2 \beta \sigma^{2}} t-X_u(t)\right) e^{  \sqrt{2 \beta/\sigma^{2}} X_u(t)-2 \beta t}     \text{ and }  Z^{(1)}_{\infty} = \lim _{t \rightarrow \infty}
     Z^{(1)}_{t}>0 \text{ a.s.}
 \end{equation}
The corresponding additive martingales of type $1$  individuals and their a.s. limits are given by
\begin{equation}\label{eq-def-martingale-type1'}
   W^{(1)}_{t}(\lambda)  :=    \sum_{u \in N_t^1}  e^{ \lambda X_u(t)- (\frac{\lambda^2 \sigma^2}{2}+ \beta) t}  \   \text{and}  \ W^{(1)}_{\infty}(\lambda) = \lim _{t \rightarrow \infty}   W^{(1)}_{t}(\lambda)  \ \text{ a.s. for } \lambda \in \mathbb{R}.
 \end{equation}

Divide the parameter space
$\left(\beta, \sigma^2\right) \in \mathbb{R}_{+}^2$   into three regions (see Figure \ref{fig-phase}):
\begin{equation*}
\begin{aligned}
\mathscr{C}_I &
=\left\{\left(\beta, \sigma^2\right): \sigma^2>\frac{1}{\beta}1_{\{\beta \leq 1\}}+\frac{\beta}{2 \beta-1}1_{\{\beta>1\}} \right\} \,,\\
\mathscr{C}_{I I}
& =\left\{\left(\beta, \sigma^2\right): \sigma^2<\frac{1}{\beta}1_{\{\beta \leq 1\}}+(2-\beta)1_{\{\beta>1\}}\right\} \,,\\
\mathscr{C}_{I I I}
& =\left\{\left(\beta, \sigma^2\right): \sigma^2+\beta>2 \text { and } \sigma^2<\frac{\beta}{2 \beta-1}\right\} .
\end{aligned}
\end{equation*}
The main results in \cite{BM21} are as follows. Recall the constant $C_{\star}$ in \eqref{eq-Gumbel}, \eqref{eq-converge-to-DPPP},
and the decorations $(\mathfrak{D}^{\varrho})_{\rho \geq \sqrt{2}}$ in \eqref{eq-decoration-process}.  Let $v:=\sqrt{2 \beta \sigma^2}$ and $\theta:=\sqrt{ {2 \beta}/{\sigma^2}}$.

\begin{itemize}
  \item If $\left(\beta, \sigma^2\right) \in \mathscr{C}_I$, then  $M_{t}=v t - \frac{3}{2 \theta} \log t +O_{\mathbb{P}}(1)$.
  For some constant $C$ and
  some decoration law $\mathfrak{D}_{(I)}$ obtained  implicitly in \cite{BM21},
  we have
   \begin{equation*}
    \lim_{t \to \infty}\sum_{u \in N^{2}_{t}}\delta_{X_{u}(t)- v t + \frac{3}{2 \theta} \log t} = \mathrm{DPPP}\left(\theta C Z^{(1)}_{\infty} e^{-\theta x} \dif x, \mathfrak{D}_{(I)}\right) \text{ in law.}
   \end{equation*}
   \item  If $\left(\beta, \sigma^2\right) \in \mathscr{C}_{I I}$, then   $M_{t}=\sqrt{2}t- \frac{3}{2\sqrt{2}}\log t + O_{\mathbb{P}}(1)$. There is some random variable  $\bar{Z}_{\infty} \in (0,\infty)$ (see Lemma \ref{lem-bar-Z-infinity}) such that
   \begin{equation*}
    \lim_{t \to \infty} \sum_{u \in N^{2}_{t}}\delta_{X_{u}(t)- \sqrt{2 } t + \frac{3}{2\sqrt{2}} \log t}  =    \mathrm{DPPP}\left( \sqrt{2} C_{\star}\bar{Z}_{\infty} e^{-\sqrt{2} x} \dif x, \mathfrak{D}^{\sqrt{2}}\right)  \text{ in law,}
   \end{equation*}
   \item If $\left(\beta, \sigma^2\right) \in \mathscr{C}_{I I I}$, then $M_{t}=v^* t + O_{\mathbb{P}}(1)$, where $v^*=\frac{\beta-\sigma^2}{\sqrt{2(1-\sigma^2)(\beta-1)}} > \max (v,\sqrt{2})$. For $\theta^{*}= \sqrt{2 \frac{\beta-1}{1-\sigma^2}}  $ and  $C=\frac{\alpha C(\theta^{*}) }{2(\beta-1)}$ (where $C(\theta^{*})$ is defined in  Lemma \ref{thm-Laplace-BBM-order}, (ii)), we have
   \begin{equation*}
   \lim_{t \to \infty}  \sum_{u \in N^{2}_{t}}\delta_{X_{u}(t)- v^{*}t} =   \mathrm{DPPP}\left( \theta^{*} C W^{(1)}_{\infty} (\theta^{*} ) e^{-\theta^{*} x} \dif x, \mathfrak{D}^{\theta^{*}}\right)   \text{ in law. }
   \end{equation*}
  \end{itemize}
  Moreover, Belloum  \cite{Belloum22} showed that
  \begin{itemize}
    \item If $\left(\beta, \sigma^2\right) =(1,1)$, then  $M_{t}=\sqrt{2}t - \frac{1}{2\sqrt{2}}\log t + O_{\mathbb{P}}(1)$. The extremal process
    \begin{equation*}
      \lim_{t \to \infty}  \sum_{u \in N_{t}^{2}}\delta_{X_{u}(t)- \sqrt{2}t + \frac{1}{2\sqrt{2}}\log t } =  \mathrm{DPPP}\left( \sqrt{2} C_{\star} Z^{(1)}_{\infty} e^{-\sqrt{2} x} \dif x, \mathfrak{D}^{\sqrt{2}}\right)    \text{ in law. }
     \end{equation*}
  \end{itemize}

The above results were explained by Belloum and  Mallein \cite{BM21} as follows:
 If $\left(\beta, \sigma^2\right) \in \mathcal{C}_{I I I}$, the leading coefficient $v^{*}$ is larger than $\max \left(\sqrt{2},v \right)$, and the extremal process is given by a mixture of the long-time behavior of the processes of particles of type $1$ and $2$. In other words, an extremal particle has an ancestral line that has been of type $1$ for a positive proportion of the time, and also of type $2$ for a positive proportion of the time.
If $\left(\beta, \sigma^2\right) \in \mathcal{C}_{i}$ for $i=I,II$, the order of   $M_{t}$ is the same as a single BBM of particles of type $i=1,2$, and  the extremal process is dominated by the long-time behavior of particles of type $i=1,2$, respectively.

The aim of this article is to  obtain the asymptotic behavior of the maximum and the extremal process of the two-type branching process when parameters $(\beta,\sigma^2)$ are on the boundaries between $\mathscr{C}_{I}, \mathscr{C}_{II},\mathscr{C}_{III}$, except the point $(1,1)$.  In this cases   there were some conjectures in  \cite[Section 2,4]{BM21}.
Our main results confirm that the conjectures are true with some coefficients being corrected and
we also give the result for the case for which there was no conjecture.

\subsection{Main results}
In the statements of our theorems,
we continue to use the notation introduced earlier:
 the constant $C_{\star}$ in \eqref{eq-converge-to-DPPP}   and decorations $(\mathfrak{D}^{\varrho}: \rho \geq \sqrt{2})$ in \eqref{eq-decoration-process},   the derivative martingale $Z^{(1)}_{\infty}$ and additive martingales $W^{(1)}_{\infty} (\lambda)$ in \eqref{eq-def-martingale-type1} and \eqref{eq-def-martingale-type1'}.
We note  that, since $\sqrt{2}< \theta$, we  have $W^{(1)}_{\infty}(\sqrt{2})>0$ almost surely. Denote by
\begin{align*}
 & \mathscr{B}_{I,II}=\partial \mathscr{C}_{I} \cap \partial \mathscr{C}_{II} \backslash \{(1,1)\}  = \left\{ (\beta,\sigma^2) :   \beta \sigma^2=1,  \beta<1  \right\}   ;\\
  & \mathscr{B}_{II,III} =\partial \mathscr{C}_{II} \cap \partial \mathscr{C}_{III} \backslash \{(1,1)\} = \left\{ (\beta,\sigma^2) :   \beta + \sigma^2=2,  \beta>1  \right\} ;\\
 & \mathscr{B}_{I,III} =\partial \mathscr{C}_{I} \cap \partial \mathscr{C}_{III} \backslash \{(1,1)\} = \left\{ (\beta,\sigma^2) :   \frac{1}{\beta} + \frac{1}{\sigma^2}= 2,  \beta<1  \right\}.
\end{align*}

\begin{figure}[htbp]
  \centering
  \includegraphics[scale=0.9]{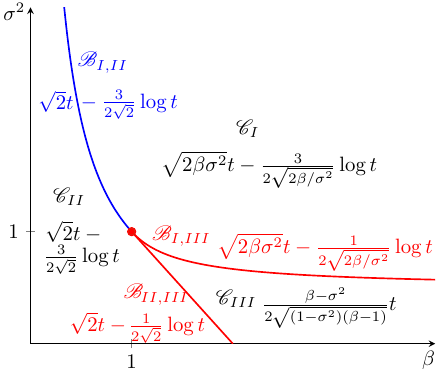}
  \caption{Phase diagram for the maximum of two type reducible BBM} \label{fig-phase}
\end{figure}

\begin{theorem}[Boundary between $\mathscr{C}_{II}$, $\mathscr{C}_{III}$]\label{thm-extremal-process-boundary-C2C3}
Assume that $(\beta,\sigma^2)\in \mathscr{B}_{II,III}$. Let
\begin{equation*}
m^{2,3}_{t}:= \sqrt{2} t - \frac{1}{2 \sqrt{2}} \log t .
\end{equation*}
 Then for the constant $C=\frac{\alpha C_{\star}} {\sqrt{2}(1-\sigma^2)}>0$, we have
 \begin{equation*}
  \lim_{t \to \infty}
 \sum_{u \in N_t} \delta_{X_u(t)-m^{2,3}_{t}} =  \lim_{t \to \infty}\sum_{u \in N^{2}_t} \delta_{X_u(t)-m^{2,3}_{t}}= \operatorname{DPPP}\left(\sqrt{2} C W^{(1)}_{\infty}(\sqrt{2}) e^{-\sqrt{2} x}\dif  x, \mathfrak{D}^{\sqrt{2}}\right) \text{ in law}.
 \end{equation*}

 \end{theorem}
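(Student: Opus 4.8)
\emph{Proof strategy.} The plan is to reduce to type-$2$ particles, condition on the type-$1$ tree so that the type-$2$ population becomes a Poissonian superposition of independent standard branching Brownian motions, and then identify the limiting Laplace functional. Fix $\varphi\colon\R\to[0,\infty)$ continuous with compact support and write $\langle\nu,\varphi\rangle=\int\varphi\,\dif\nu$. On $\mathscr{B}_{II,III}$ one has $\beta+\sigma^2=2$ and $\beta>1$, so $\beta-\tfrac1{\sigma^2}=-\tfrac{(1-\sigma^2)^2}{\sigma^2}<0$; by the many-to-one lemma and a Gaussian tail bound $\E[\#\{u\in N^1_t:X_u(t)\ge m^{2,3}_t-A\}]\le e^{(\beta-1/\sigma^2+o(1))t}\to0$ for every $A$, whence $\sum_{u\in N^1_t}\delta_{X_u(t)-m^{2,3}_t}\to0$ and it suffices to treat $\mathcal E^2_t:=\sum_{u\in N^2_t}\delta_{X_u(t)-m^{2,3}_t}$. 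Let $\mathcal F^1$ be the $\sigma$-field generated by the whole type-$1$ BBM. Given $\mathcal F^1$, the times and positions at which type-$2$ particles are created form a Poisson point process on $[0,t]\times\R$ with intensity $\alpha\,\dif s\sum_{v\in N^1_s}\delta_{X_v(s)}$, and conditionally on these events the type-$2$ subtrees are independent standard BBMs; hence, with $\Phi_\tau(y):=\mathsf{E}\big[\exp(-\sum_{w\in\mathsf N_\tau}\varphi(\mathsf X_w(\tau)+y-m^{2,3}_t))\big]$ denoting the Laplace functional of a standard BBM of length $\tau$ started at $y$, the exponential formula for Poisson processes gives
\[
\E\big[e^{-\langle\mathcal E^2_t,\varphi\rangle}\big]=\E\Big[\exp\Big(-\alpha\int_0^t\sum_{v\in N^1_s}\big(1-\Phi_{t-s}(X_v(s))\big)\,\dif s\Big)\Big].
\]

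The heart of the matter is to show that, in probability,
\[
\alpha\int_0^t\sum_{v\in N^1_s}\big(1-\Phi_{t-s}(X_v(s))\big)\,\dif s\ \xrightarrow[t\to\infty]{}\ \sqrt2\,C\,W^{(I)}_\infty(\sqrt2)\,\Psi(\varphi),
\]
where $\Psi(\varphi):=\int_\R\big(1-\E_{\mathfrak{D}^{\sqrt{2}}}[e^{-\sum_j\varphi(z+d_j)}]\big)e^{-\sqrt2 z}\,\dif z$ and $C=\tfrac{\alpha C_\star}{\sqrt2(1-\sigma^2)}$. Granting this, the previous display tends by bounded convergence to $\E[\exp(-\sqrt2 C W^{(I)}_\infty(\sqrt2)\Psi(\varphi))]$, which is exactly the Laplace functional of $\mathrm{DPPP}(\sqrt2 C W^{(I)}_\infty(\sqrt2)e^{-\sqrt2 x}\dif x,\mathfrak{D}^{\sqrt{2}})$; letting $\varphi$ range over all nonnegative continuous compactly supported functions gives the convergence in law of $\mathcal E^2_t$, and the first paragraph upgrades it to $\sum_{u\in N_t}$.

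To establish the displayed limit I would proceed in three steps. \emph{(i) Bramson estimate.} By a version of Lemma~\ref{thm-Laplace-BBM-order}, with $Y=Y(y):=m^{2,3}_t-y-m_{t-s}$ one has $1-\Phi_{t-s}(y)\sim\sqrt2\,C_\star\,\Psi(\varphi)\,Y\,e^{-\sqrt2 Y}\,e^{-Y^2/(2(t-s))}$ uniformly in the moderate-deviation window $Y\asymp\sqrt{t-s}$ (with matching upper bounds, the quantity being negligible if $Y\gtrsim t-s$ or if $t-s$ is not large); the constant $\sqrt2 C_\star$ is the one forced by consistency with \eqref{eq-converge-to-DPPP}. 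This is exactly the regime that occurs: as step \emph{(iii)} confirms, the emigration times that contribute are $s\asymp\sqrt t$ and the contributing type-$1$ positions are $y\approx\sqrt2\sigma^2 s$ (the point minimising the joint cost of the type-$1$ displacement and the subsequent climb), so $Y\approx\sqrt2(1-\sigma^2)s\asymp\sqrt t\asymp\sqrt{t-s}$ and the Gaussian factor must be kept. \emph{(ii) Reduction to the additive martingale.} On $\mathscr{B}_{II,III}$ one has $\tfrac{(\sqrt2)^2\sigma^2}{2}+\beta=\sigma^2+\beta=2$, so $W^{(I)}_s(\sqrt2)=\sum_{v\in N^1_s}e^{\sqrt2 X_v(s)-2s}$; also $m^{2,3}_t-m_{t-s}=\sqrt2 s+\tfrac1{\sqrt2}\log t+o(1)$ for $s=o(t)$, which gives the exact identity $e^{-\sqrt2 Y(y)}=t^{-1}e^{\sqrt2 y-2s}e^{o(1)}$. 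Setting $g_s(y):=Y(y)\,e^{-Y(y)^2/(2(t-s))}$ and combining \emph{(i)}, \emph{(ii)},
\[
\sum_{v\in N^1_s}\big(1-\Phi_{t-s}(X_v(s))\big)\ \approx\ \frac{\sqrt2\,C_\star\,\Psi(\varphi)}{t}\sum_{v\in N^1_s}e^{\sqrt2 X_v(s)-2s}\,g_s(X_v(s)).
\]
Since, under the spine change of measure associated with $W^{(I)}(\sqrt2)$, the mass of $W^{(I)}_s(\sqrt2)$ is carried by particles with $X_v(s)=\sqrt2\sigma^2 s+O(\sqrt s)$, on which $g_s(y)=\sqrt2(1-\sigma^2)s\,e^{-(1-\sigma^2)^2 s^2/t}(1+o(1))$ when $s\asymp\sqrt t$, the last sum equals $\sqrt2(1-\sigma^2)s\,e^{-(1-\sigma^2)^2 s^2/t}\,W^{(I)}_s(\sqrt2)(1+o(1))$. \emph{(iii) Integration in $s$.} Using that $W^{(I)}(\sqrt2)$ is uniformly integrable (because $\sqrt2<\theta$, i.e. $\beta>1$), so $W^{(I)}_s(\sqrt2)\to W^{(I)}_\infty(\sqrt2)$ a.s., that the integral is carried by $s\asymp\sqrt t$ (the ranges $s\ll\sqrt t$ and $s\gg\sqrt t$ contributing negligibly), that $\int_0^\infty r e^{-(1-\sigma^2)^2 r^2}\dif r=\tfrac1{2(1-\sigma^2)^2}$, and dominated convergence with dominating function $\sup_s W^{(I)}_s(\sqrt2)<\infty$, the substitution $s=r\sqrt t$ yields the limit with the stated constant $C$.

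The main obstacle is step \emph{(ii)}, namely turning $\sum_{v\in N^1_s}(1-\Phi_{t-s}(X_v(s)))$ into the additive martingale times a deterministic profile. This needs a genuinely \emph{uniform} Bramson/Laplace estimate in the regime $Y\asymp\sqrt{t-s}$, where the Gaussian correction is of unit order and cannot be dropped, together with enough upper bounds to discard the large-deviation range $Y\gtrsim t-s$, the range of small $t-s$, and the atypical type-$1$ positions; and it needs a change-of-measure argument for $W^{(I)}(\sqrt2)$ showing that $g_s$ may be frozen at its value near $\sqrt2\sigma^2 s$ with an error that vanishes after integration in $s$ and after taking the conditional expectation. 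The remaining estimates are first- and second-moment computations of the same type as in \cite{BM21}.
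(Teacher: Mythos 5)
Your proposal is correct and follows essentially the same route as the paper: the Poissonian/many-to-one representation of the Laplace functional conditioned on the type-$1$ process, the uniform Bramson-type asymptotics of Lemma \ref{thm-Laplace-BBM-order}(i) in the window $T_u\asymp\sqrt t$, a Gibbs-measure law of large numbers (the paper's Proposition \ref{lem-functional-convergence-additive-martingale}, proved by the same spine change of measure you invoke) to replace the weighted sum by $\sqrt2(1-\sigma^2)\lambda e^{-(1-\sigma^2)^2\lambda^2}W^{(I)}_\infty(\sqrt2)$, and the Gaussian integral in $\lambda$ producing $C$. The ranges you discard informally ($s\ll\sqrt t$, $s\gg\sqrt t$, atypical type-$1$ positions) are exactly what the paper's Lemma \ref{lem-born-after-t1/2} handles by first-moment estimates with the barrier $X_u(s)\le vs+K$, as you anticipate.
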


 Theorem \ref{thm-extremal-process-boundary-C2C3} confirms Conjecture 2.2 of Belloum and Mallein  \cite{BM21}
 with the constant before $\log t$ being corrected as $ \frac{1}{2 \sqrt{2}}$, and the random variable $\widetilde{Z}$ there taken to be $W^{(1)}_{\infty} (\sqrt{2})$.

Recall that throughout this paper, we set
 \begin{equation*}
   v:=\sqrt{2 \beta \sigma^2}\ \text{ and } \ \theta:=\sqrt{ {2 \beta}/{\sigma^2}}.
 \end{equation*}

\begin{theorem}[Boundary between $\mathscr{C}_{I}$, $\mathscr{C}_{III}$]\label{thm-extremal-process-boundary-C1C3}
  Assume that $(\beta,\sigma^2)\in \mathscr{B}_{I,III}$. Let
 \begin{equation*}
  m^{1,3}_{t}:= v t - \frac{1}{2 \theta} \log t.
 \end{equation*}
Then we have
\begin{equation*}
  \lim_{t \to \infty}
  \sum_{u \in N_t} \delta_{X_u(t)-m^{1,3}_{t}}=\lim_{t \to \infty}
\sum_{u \in N_t^2} \delta_{X_u(t)-m^{1,3}_{t}}=\mathrm{DPPP}\left(\theta C Z^{(1)}_{\infty} e^{- \theta x}\dif  x, \mathfrak{D}^{\theta}\right) \text{ in law}
\end{equation*}
with $C=\frac{ 2\alpha  \sigma C(\theta)}{\sqrt{2 \pi}\beta (1-\sigma^2)} $  (where $C(\theta)$ is defined in  Lemma \ref{thm-Laplace-BBM-order}, (ii)).
\end{theorem}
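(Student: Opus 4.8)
The plan is to follow the same overall strategy that underlies the $\mathscr{C}_{III}$ result of Belloum and Mallein \cite{BM21}, but to carry out the analysis exactly on the critical curve $\frac1\beta+\frac1{\sigma^2}=1$ (with $\beta<1$), where the type-$1$ velocity $v=\sqrt{2\beta\sigma^2}$ coincides with the anomalous velocity $v^\ast$ and with $\theta=\theta^\ast$. First I would set up the decomposition of the extremal process of type-$2$ particles conditionally on the type-$1$ BBM: each type-$1$ particle, along its trajectory, throws off type-$2$ BBMs at rate $\alpha$, and each such type-$2$ subtree, started at space-time point $(X_u(s),s)$, contributes to the extremal window around $m^{1,3}_t=vt-\frac{1}{2\theta}\log t$ only if its own maximum reaches roughly that level. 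Since a standard BBM run for time $t-s$ has maximum $\approx\sqrt2(t-s)$, and we need displacement $\approx vt - X_u(s)$, the relevant type-$1$ ancestors are those travelling at the atypical speed $v=\theta\sigma^2/\sqrt{?}$... more precisely at the speed that makes the type-$1$ additive martingale $W^{(I)}_t(\theta)$ the correct bookkeeping object (this is why $\theta$, not $v/\sigma^2$-type quantities, appears). The key structural input is a ``many-to-one''/Laplace-functional identity expressing $\mathbb E[\exp\{-\langle \sum_{u\in N^2_t}\delta_{X_u(t)-m^{1,3}_t},f\rangle\}\mid \mathcal F^1_\infty]$ as an exponential of an integral, over the type-$1$ genealogy, of the probability that a single type-$2$ BBM exceeds the threshold — and here I would invoke Lemma \ref{thm-Laplace-BBM-order}(ii), which supplies the constant $C(\theta)$ governing the tail $\mathsf P(\mathsf M_t\ge \theta t+y)\sim C(\theta)\,(\text{poly})\,e^{-\theta y}$ of a standard BBM at the speed $\theta>\sqrt2$.

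The second step is the asymptotic evaluation of that integral. Writing the contribution of a type-$1$ particle $u$ at time $s$ with position $x$, the ``success probability'' of its attached type-$2$ BBMs reaching $m^{1,3}_t - x + (\text{decoration shift})$ is, by the BBM tail estimate, of order $e^{-\theta(vt-x)}\times(\text{power of }(t-s))$; integrating $\alpha\,ds$ against the type-$1$ occupation measure $\sum_{u\in N^1_s}\delta_{X_u(s)}$ and using that $\sum_{u\in N_s^1}e^{\theta X_u(s)-(\frac{\theta^2\sigma^2}{2}+\beta)s}\to W^{(I)}_\infty(\theta)$ almost surely, one expects the leading term to be
\[
\theta\, C\, W^{(I)}_\infty(\theta)\,e^{-\theta x}\,\bigl(1-\widehat f(x)\bigr),
\]
with $C$ built from $\alpha$, $C(\theta)$, the Gaussian density normalisation $1/\sqrt{2\pi}$, and the factors $\sigma/\beta$ and $1/(1-\sigma^2)$ coming respectively from the Brownian scaling between $\mathsf X^{\beta,\sigma^2}$ and $\mathsf X^{1,1}$, and from the $\int_0^t e^{-(\text{rate})s}ds$-type integral whose exponential rate vanishes precisely on the boundary — producing instead a \emph{linear}-in-$t$ growth that is then killed by the $-\frac{1}{2\theta}\log t$ correction (this is the mechanism forcing the coefficient $\frac12$ rather than $\frac32$ in front of $\log t$). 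This identifies the limiting Laplace functional with that of $\mathrm{DPPP}(\theta C Z^{(I)}_\infty e^{-\theta x}dx,\mathfrak D^\theta)$; the appearance of the \emph{derivative} martingale $Z^{(I)}_\infty$ rather than the additive one $W^{(I)}_\infty(\theta)$ in the final answer is exactly the usual critical-case phenomenon (as in the standard BBM result \eqref{eq-converge-to-DPPP}) and would be extracted by a second-moment/truncation argument replacing $W^{(I)}_t(\theta)$ by its derivative counterpart at the critical parameter — note $\theta$ is critical for the type-$1$ additive martingale precisely because $(\beta,\sigma^2)\in\partial\mathscr C_I$.

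The third step is to show that the contribution of type-$1$ particles themselves, and of type-$2$ particles born from ``bulk'' (non-extremal) type-$1$ ancestors, is negligible, so that $\sum_{u\in N_t}\delta_{X_u(t)-m^{1,3}_t}$ and $\sum_{u\in N^2_t}\delta_{X_u(t)-m^{1,3}_t}$ have the same limit; this is a first-moment estimate since on $\mathscr B_{I,III}$ the type-$1$ maximum $vt-\frac{3}{2\theta}\log t$ sits $\frac{1}{\theta}\log t\to\infty$ \emph{below} $m^{1,3}_t$, so type-$1$ particles disappear from the window. The main obstacle, as usual in these problems, is the uniform control of the tail estimate for the type-$2$ BBM over the full range of ``birth times'' $s\in[0,t]$ and ``starting heights'' — one needs Bramson-type uniform bounds (a ballot-problem/barrier estimate) valid simultaneously in $s$ and in the spatial argument, so that the dominated-convergence passage to the limit in the conditional Laplace functional is justified; this, together with the second-moment argument that upgrades $W^{(I)}$ to $Z^{(I)}$ at criticality, is where the real work lies. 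The remaining steps — verifying that the limit point process is a genuine DPPP with decoration $\mathfrak D^\theta$ via the characterization of \cite{BBCM22,ABK13}, and identifying the constant $C=\frac{\alpha\sigma C(\theta)}{\sqrt{2\pi}\beta(1-\sigma^2)}$ by tracking the scaling constants — are bookkeeping once the above estimates are in place.
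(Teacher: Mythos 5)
Your overall skeleton does match the paper's route: condition on the type-$1$ process, use the many-to-one identity (Lemma \ref{many-to-one-1}(iii)) to write the Laplace functional of $\sum_{u\in N^2_t}\delta_{X_u(t)-m^{1,3}_t}$ as an exponential integral over the type-$1$ genealogy, plug in the uniform tail/Laplace estimate of Lemma \ref{thm-Laplace-BBM-order}(ii) at speed $\theta>\sqrt2$, and dispose of the type-$1$ particles by a first-moment argument (their maximum $vt-\frac{3}{2\theta}\log t$ is indeed $\frac{1}{\theta}\log t$ below $m^{1,3}_t$). But there is a genuine gap at exactly the point you defer to ``a second-moment/truncation argument replacing $W^{(I)}_t(\theta)$ by its derivative counterpart.'' First, you never pin down the space-time localization of the type-$2$ birth events, which is what determines the correct functional of the type-$1$ population: on $\mathscr{B}_{I,III}$ (note the relation is $\frac1\beta+\frac1{\sigma^2}=2$ with $\beta>1$, $\sigma^2<1$, not $=1$ with $\beta<1$) the contributing particles have $t-T_u=\Theta(\sqrt t)$ and $X_u(T_u)\approx vT_u-(\theta-v)(t-T_u)$, i.e.\ they sit $\Theta(\sqrt t)$ below the critical line in a window of width $\Theta((t-T_u)^{1/2})=\Theta(t^{1/4})$ (this is the content of Lemma \ref{lem-born-before-t-t1/2}, needed also to justify the uniform application of Lemma \ref{thm-Laplace-BBM-order}(ii), whose uniformity only holds for $x\in[-R\sqrt t,-\frac1R\sqrt t]$). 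Consequently the object whose convergence you must establish is not the critical additive martingale itself but a window-restricted Gibbs sum of the form $\sum_{u\in N^1_s}e^{-\theta(vs-X_u(s))}G\bigl((vs-X_u(s)-(\theta-v)(t-s))/\sqrt{t-s}\bigr)$, and one must show that, after normalization by $\Theta(t^{-3/4})$, it converges to an explicit constant times $Z^{(I)}_\infty$. This is the paper's Proposition \ref{lem-functional-convergence-derivative-martingale}, a refinement of Madaule's CLT for the critical Gibbs measure that interpolates between the CLT and a local limit theorem; it is new, and it is what produces both $Z^{(I)}_\infty$ and the constant $\frac{\alpha\sigma C(\theta)}{\sqrt{2\pi}\beta(1-\sigma^2)}$, neither of which can be read off from the Seneta--Heyde scaling or from \eqref{eq-CLT-Gibbs-sqrt2} alone.

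Second, the specific device you propose for this step would fail: because the window width $h_t=\Theta(t^{-1/4})$ shrinks, the second moment of the normalized window sum is not bounded, so the classical truncated-second-moment argument (which works for the plain derivative martingale or for macroscopic windows) does not close. The paper has to run the Aïdékon--Shi spine/change-of-measure scheme with the truncated derivative martingale, reduce to a Bessel-$3$ computation, and control a $p$-th moment with a carefully chosen $p\in(1,2)$ on a tailored good event, closing the estimate with a bootstrap; this is the real technical core of the proof of Theorem \ref{thm-extremal-process-boundary-C1C3} and is absent from your proposal. Until you either prove such a window convergence result or find a substitute for it, the passage from the many-to-one formula to the stated intensity $\theta C Z^{(I)}_\infty e^{-\theta x}\dif x$ remains unproved.
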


Theorem \ref{thm-extremal-process-boundary-C1C3} confirms
Conjecture 2.1 of Belloum and Mallein  \cite{BM21} with the  coefficient
before $\log t$ being corrected as $ \frac{1}{2 \theta}$ and the decoration law $\widetilde{\mathfrak{D}}$ there taken to be $\mathfrak{D}^{\theta}$.

\begin{theorem}[Boundary between $\mathscr{C}_{I}$, $\mathscr{C}_{II}$]\label{thm-boundary-i-ii} Assume that $(\beta,\sigma^2)\in \mathscr{B}_{I,II}$. Let
  \begin{equation*}
    m^{1,2}_{t}:=m_{t} =\sqrt{2} t-\frac{3}{2 \sqrt{2}} \log t.
  \end{equation*}
For some random variable $\bar{Z}_{\infty}$ (defined in  Lemma  \ref{lem-bar-Z-infinity}), we have
  \begin{equation*}
    \lim _{t \rightarrow \infty} \sum_{u \in N_t} \delta_{X_u(t)-m^{1,2}_{t}}=\lim _{t \rightarrow \infty} \sum_{u \in N_t^2} \delta_{X_u(t)-m^{1,2}_{t}}= \operatorname{DPPP}\left(\sqrt{2} C_{\star} \bar{Z}_{\infty} e^{-\sqrt{2} x} \dif x, \mathfrak{D}^{\sqrt{2}}\right) \text{ in law.}
  \end{equation*}
  \end{theorem}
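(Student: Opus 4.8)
\emph{Proof idea.} On $\mathscr B_{I,II}$ we have $v=\sqrt{2\beta\sigma^2}=\sqrt2$ and $\theta=\sqrt{2\beta/\sigma^2}=\sqrt2\,\beta$; since $\beta<1$ this is the critical case in which the type~$1$ subpopulation, viewed on its own as a BBM of branching rate $\beta$ and diffusion $\sigma^2$, moves at exactly the speed $\sqrt2$ of a standard BBM, while its front $\sqrt2 t-\frac{3}{2\sqrt2\beta}\log t+O_{\mathbb P}(1)$ lags $m^{1,2}_t$ by $\frac{3}{2\sqrt2}(\tfrac1\beta-1)\log t\to\infty$. Hence no type~$1$ particle comes within $O(1)$ of $m^{1,2}_t$, so $\lim_{t\to\infty}\sum_{u\in N_t}\delta_{X_u(t)-m^{1,2}_t}=\lim_{t\to\infty}\sum_{u\in N_t^2}\delta_{X_u(t)-m^{1,2}_t}$ whenever the latter exists, and it suffices to analyse the type~$2$ particles.

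The plan is to follow the immigration decomposition used for region $\mathscr C_{II}$ in \cite{BM21}. Every $u\in N_t^2$ has a unique ancestral line that is of type~$1$ up to a time $s_u$ and of type~$2$ thereafter; write $\{(s_i,x_i)\}$ for the point process of these immigration events (time and position of the type~$1$ parent). Conditionally on the type~$1$ process and on $\{(s_i,x_i)\}$, the families issued from the $(s_i,x_i)$ are independent standard BBMs, so $\sum_{u\in N_t^2}\delta_{X_u(t)-m^{1,2}_t}$ is a superposition of independent copies of the standard extremal process, the $i$-th shifted by $x_i+m^{1,2}_{t-s_i}-m^{1,2}_t=x_i-\sqrt2 s_i+o(1)$ for $s_i=o(t)$. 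Fix a large $T$ and split according to $s_u\le T$ or $s_u>T$. For the first part there are, conditionally on the type~$1$ process up to time $T$, only finitely many families, each of age $\ge t-T$; by \eqref{eq-converge-to-DPPP}, and since a finite superposition of DPPPs with common decoration $\mathfrak D^{\sqrt2}$ and intensities $c_i e^{-\sqrt2 x}\dif x$ is a $\mathrm{DPPP}((\sum_i c_i)e^{-\sqrt2 x}\dif x,\mathfrak D^{\sqrt2})$, this part converges in law to
\begin{equation*}
\mathrm{DPPP}\Bigl(\sqrt2 C_\star\Bigl(\textstyle\sum_{i:\,s_i\le T}e^{\sqrt2 x_i-2s_i}\,\mathsf Z^{(i)}_\infty\Bigr)e^{-\sqrt2 x}\dif x,\ \mathfrak D^{\sqrt2}\Bigr),
\end{equation*}
where $\mathsf Z^{(i)}_\infty$ are i.i.d.\ copies of the standard derivative martingale limit (the weight $e^{\sqrt2 x_i-2s_i}=e^{\sqrt2(x_i-\sqrt2 s_i)}$ being the exponential of the deterministic shift). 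By Lemma~\ref{lem-bar-Z-infinity}, $\sum_{i:\,s_i\le T}e^{\sqrt2 x_i-2s_i}\mathsf Z^{(i)}_\infty$ increases a.s.\ as $T\uparrow\infty$ to the positive, a.s.\ finite limit $\bar Z_\infty$ (the same object occurring in the $\mathscr C_{II}$ regime).

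The heart of the matter is to show that, with probability tending to $1$ as $t\to\infty$ and then $T\to\infty$, the part with $s_u>T$ puts no mass on any fixed $[-K,\infty)$. Conditionally on the type~$1$ process and $\{(s_i,x_i)\}$, the expected number of points it contributes there is — using the uniform tail bound $\mathbb P(\mathsf M_a\ge m_a+y)\le C y e^{-\sqrt2 y}$ of Lemma~\ref{thm-Laplace-BBM-order} and $m^{1,2}_t-m^{1,2}_{t-s}=\sqrt2 s+o(1)$ — at most a constant times
\begin{equation*}
\int_T^{t}\ \sum_{u\in N_s^1}\bigl(\sqrt2 s-X_u(s)+K\bigr)^{+}e^{\sqrt2 X_u(s)-2s}\,\dif s .
\end{equation*}
The key point — essentially the content of Lemma~\ref{lem-bar-Z-infinity} — is that, on a high-probability event on which the type~$1$ backbone stays below $r\mapsto\sqrt2 r+O(1)$, this $s$-integral over $[0,\infty)$ converges a.s.\ (so its tail from $T$ vanishes), the integrand typically decaying polynomially in $s$ (like $s^{-3/(2\beta)}$ up to logarithms); this convergence holds precisely because $\sqrt2$ is supercritical for the type~$1$ BBM, i.e.\ $\theta=\sqrt2\beta<\sqrt2$, and it would fail at the excluded point $(1,1)$. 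One cannot argue by a crude first moment: many-to-one gives $\mathbb E[\sum_{u\in N_s^1}(\sqrt2 s-X_u(s))e^{\sqrt2 X_u(s)-2s}]=\sqrt2\,s(1-\sigma^2)e^{(\sigma^2+\beta-2)s}$, which diverges with the wrong sign (since $\sigma^2\beta=1$, $\sigma^2\ne\beta$ force $\sigma^2+\beta>2$ and $\sigma^2>1$) — the relevant derivative martingale of the two-type process is not in $L^1$ — so the estimate must be run conditionally on the type~$1$ population, using the a.s.\ convergence of the type~$1$ derivative martingale $Z^{(I)}$ of \eqref{eq-def-martingale-type1}.

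Combining the two parts and letting $t\to\infty$ and then $T\to\infty$, the Laplace functional of $\sum_{u\in N_t^2}\delta_{X_u(t)-m^{1,2}_t}$ converges to that of $\mathrm{DPPP}(\sqrt2 C_\star\bar Z_\infty e^{-\sqrt2 x}\dif x,\mathfrak D^{\sqrt2})$; the same estimates give tightness of $M_t-m^{1,2}_t$, and the identification of the limit is then completed via the DPPP characterization used in \cite{BM21}, after checking superposability and that the decoration is $\mathfrak D^{\sqrt2}$. I expect the main obstacle to be the $s>T$ step: because $v=\sqrt2$ there is no exponential gap suppressing late immigrations (unlike in $\mathscr C_{II}$, where the type~$1$ front lags by $\Theta(t)$), only a polynomial weight, summable solely for $\theta<\sqrt2$, i.e.\ $\beta<1$ — the borderline $\theta=\sqrt2$ being attained exactly at $(1,1)$, where the normalization becomes $\sqrt2 t-\frac1{2\sqrt2}\log t$ \cite{Belloum22}.
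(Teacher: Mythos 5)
Your overall architecture is the same as the paper's: decompose the type~$2$ population according to the immigration (type-transformation) time $T_u$, show that the families with $T_u\le R$ produce, as $t\to\infty$, a superposition of independent shifted copies of the standard extremal process, i.e.\ a $\mathrm{DPPP}$ with intensity proportional to $\bar Z_R=\sum_{u\in\mathcal B}1_{\{T_u\le R\}}Z^{(u)}_\infty$ (your weights $e^{\sqrt2 x_i-2s_i}\mathsf Z^{(i)}_\infty$ are exactly the $Z^{(u)}_\infty$ of the paper), and then let $R\to\infty$ using the a.s.\ finiteness of $\bar Z_\infty$; your observation that the unconditional first moment diverges and that one must work conditionally under the envelope, using $\sigma>\sqrt\beta$, is precisely how Lemma~\ref{lem-bar-Z-infinity} is proved. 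Your preliminary remark that type~$1$ particles lag $m^{1,2}_t$ by $\frac{3}{2\sqrt2}(\frac1\beta-1)\log t$ is also correct.

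However, the step you yourself identify as the heart of the matter — that immigrants with $T_u>T$ contribute nothing near $m^{1,2}_t$ — has a genuine gap as sketched. Your displayed bound uses $m^{1,2}_t-m^{1,2}_{t-s}=\sqrt2 s+o(1)$, which is only valid for $s=o(t)$; for $s$ comparable to $t$ one has $m^{1,2}_t-m^{1,2}_{t-s}=\sqrt2 s-\frac{3}{2\sqrt2}\log\frac{t}{t-s}$, and the conditional expected count therefore carries an extra factor of order $\bigl(\frac{t}{t-s}\bigr)^{3/2}\bigl(1+\log\frac{t}{t-s}\bigr)$, which blows up as $s\uparrow t$ (moreover the tail bound of Lemma~\ref{lem-Max-BBM-tail} requires the excess to exceed $1$ and $t-s\ge t_0$, so immigrations with $t-s=O(1)$ need separate treatment; also the bound you invoke is Lemma~\ref{lem-Max-BBM-tail}, not Lemma~\ref{thm-Laplace-BBM-order}). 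Consequently the a.s.\ convergence of $\int_0^\infty\sum_{u\in N^1_s}(1+(\sqrt2 s-X_u(s))_+)e^{\sqrt2 X_u(s)-2s}\,\dif s$ — the content of Lemma~\ref{lem-bar-Z-infinity} — does \emph{not} by itself make the tail $\int_T^t$ of your bound small: one must beat the $(\frac{t}{t-s})^{3/2}$ factor. This is exactly where the paper invests the real work (Lemma~\ref{lem-Tu-less-than-R}): the window $T_u\in[t-(\log t)^3,t]$ is handled with the curved envelope $\sqrt2 r-a_t+a_{t-r}+K$ of \eqref{eq-upper-envelope} and a change of measure to a Bessel-$3$ process, producing the decay $\bigl(\frac{t-s+1}{t+1}\bigr)^{3(\sqrt2-\theta)/(2\theta)}s^{-3/2}$ which uses $\theta<\sqrt2$ quantitatively, and the window $T_u\in[R,t-(\log t)^3]$ is handled by an explicit integral estimate of $\int(\frac{t}{t-s})^{3/2}\frac{1+\log\frac{t}{t-s}}{s^{3/2}}\dif s$ split at $t/2$. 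Your closing heuristic ("only a polynomial weight, summable solely for $\theta<\sqrt2$") names the right mechanism, but the proposal as written does not supply the estimate that implements it, and the bound you do write down is false in the regime that matters most.
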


  \begin{remark}
Combining Theorems \ref{thm-extremal-process-boundary-C2C3}, \ref{thm-extremal-process-boundary-C1C3}, \ref{thm-boundary-i-ii} and results in \cite{BM21,Belloum22}, we can observe a double jump in the maximum $\max_{u \in N_{t} }X_{u}(t)$ when
the parameters $(\beta,\sigma^2)$ cross
the boundary of anomalous spreading region $\mathscr{C}_{III}$.
The leading order varies continuously but the logarithmic correction changes from $-\frac{3}{2\sqrt{2}}\log t$ to $-\frac{1}{2\sqrt{2}} \log t$  then to $0$; or  from $-\frac{3}{2\theta}\log t$ to $-\frac{1}{2\theta}\log t $ then to $0$.
See Figure \ref{fig2}.
Such a phase transition reminds us of the study of time inhomogeneous BRW \cite{FZ12b}, in which a constant  multiplying the logarithmic correction changes from $-\frac{1}{2}$ to $-\frac{3}{2 }  $ then to $-\frac{6}{2} $.
Also in a more general setting of \cite{FZ12b}, phase transitions  becomes a little bit more complex and  a double jump can occur as well (see \cite{Mallein15b}).
 A more interesting problem is to make the logarithmic correction smoothly interpolates from $1$ to $6$,  which is done for variable speed BBM in \cite{BH20}
 (see also \cite{KS15}). For  two-type reducible BBMs, we  ask a similar  question as follows.
\begin{question} Can we  let
the parameters $(\beta_{t},\sigma^2_{t})$
depend  on the time horizon $t$, in order that the logarithmic correction for the order of the maximum at time $t$  smoothly interpolates
$-\frac{3}{2\sqrt{2}} \log t$ to $-\frac{1}{2\sqrt{2}} \log t$ then to $0$
or  from $-\frac{3}{2\theta} \log t$ to $-\frac{1}{2\theta} \log t$ then to $0$?
\end{question}
 \end{remark}

\begin{remark}
  Question 1 above has now been addressed in our recent preprint \cite{MR24}.
\end{remark}

\begin{figure}
  \begin{minipage}[t]{0.48\textwidth}
  \centering
  \includegraphics[scale=0.3]{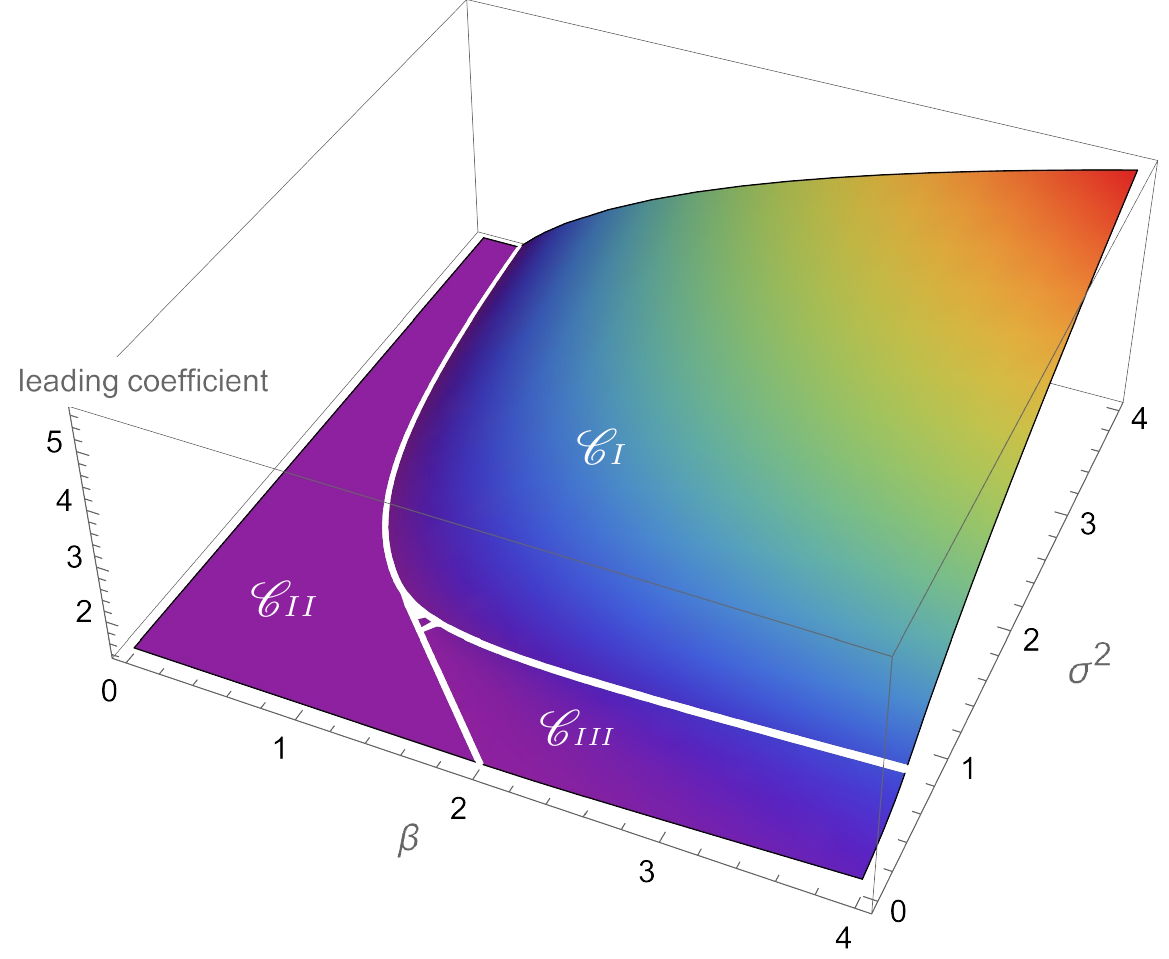}
  \end{minipage}
  \begin{minipage}[t]{0.5\textwidth}
  \centering
  \includegraphics[scale=0.3]{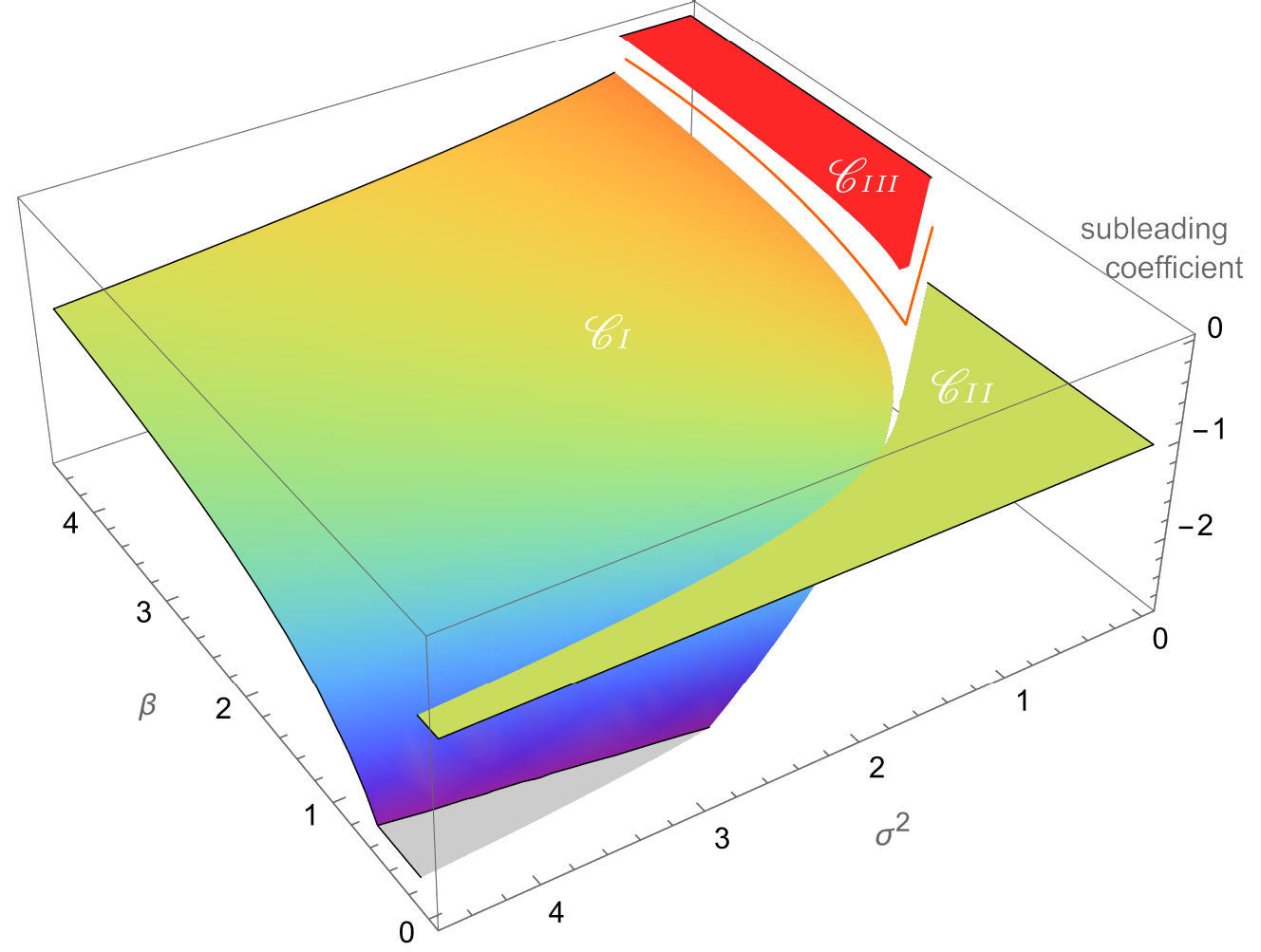}
  \end{minipage}
  \caption{Coefficients for term $t$ and $\log t$ as function of $(\beta,\sigma^2)$}
  \label{fig2}
  \end{figure}

  \begin{remark}
When $(\beta,\sigma^2) \in \mathscr{B}_{I,II}$,
 the localization of paths of extremal particles
is the same as in the case  $(\beta,\sigma^2) \in \mathscr{C}_{II}$ (see Lemma \ref{lem-Tu-less-than-R}).
So when
 $(\beta,\sigma^2)$ crosses the
 boundary between $\mathscr{C}_{I}$, $\mathscr{C}_{II}$, the maximum of the process only experience one jump:  the subleading coefficient
 changes from $-\frac{3}{2\sqrt{2}}$ to $-\frac{3}{2\theta}$.
\end{remark}

\begin{remark}
In the proof of Theorems \ref{thm-extremal-process-boundary-C2C3}, \ref{thm-extremal-process-boundary-C1C3}, \ref{thm-boundary-i-ii}, we only have to prove the convergence of the extremal process  consisting of particles of type $2$. Because by \eqref{eq-Gumbel} and \eqref{eq-BBMscaling}, the height of the highest type $1$ particles is around $v t- \frac{3}{2 \theta} \log t+O_{\P}(1)$, which is
 much below  type $2$ particles.
Moreover, our results Theorems \ref{thm-extremal-process-boundary-C2C3}, \ref{thm-extremal-process-boundary-C1C3}, \ref{thm-boundary-i-ii}  can be strengthened as the
joint convergence of the extremal processes and its maximum  $(\mathcal{E}_{t}, \max \mathcal{E}_{t})$ to
$ (\mathcal{E}_{\infty}, \max \mathcal{E}_{\infty})$, where $\mathcal{E}_{\infty}$ is a DPPP and $\max \mathcal{E}_{\infty}$ is its maximum. The reason is that,  by  \cite[Lemma 4.4]{BBCM22}, to prove
the  convergence of $ (\mathcal{E}_t, \max \mathcal{E}_t)$,
 it is sufficient to show the convergence of Laplace functional $\mathbb{E}[e^{-\langle \phi, \mathcal{E}_{t} \rangle}]$ with certain test functions $\varphi \in \mathcal{T}$ introduced below.
\end{remark}

\paragraph{Notation convention.}
Let  $\mathcal{T} $  be the set of  functions $\varphi \in C_{b}^{+}(\mathbb{R})$ such that $\inf \mathrm{supp}( \varphi)>-\infty$  and for some $a \in \mathbb{R}$,  $ \varphi(x) \equiv$ some positive constant for all $x > a$.
$\mathcal{T}$ will  serve as  test functions in the Laplace functional
(see \cite[Lemma 4.4]{BBCM22}).
For two quantities $f$ and $g$, we write $f \sim g$ if $\lim f/g= 1$.  We write $f  \lesssim g$ if there exists a constant $C>0$ such that $f \leq C g $. We write $f  \lesssim_\lambda g$ to stress that the constant $C$ depends on parameter $\lambda$. We use the standard notation $\Theta(f)$  to denote a non-negative quantity such that there exists constant  $c_{1}, c_{2}>0$ such  that $c_{1} f \leq \Theta(f)  \leq c_{2}f $.

\subsection{Heuristics}
We restate the optimization problem introduced in \cite[Section 2.1]{BM21} (see also Biggins \cite{Biggins10}).
 First, we introduce the following definition:

\begin{definition}
  If $u\in N^{2}_{t}$, we denote by  $T_{u}$   the time at which the oldest ancestor of type $2$ of $u$ was born. We say $T_{u}$ is the \textit{type transformation time} of $u$.
\end{definition}

 For $p\in[0,1]$, let $\mathcal{N}_{p,a,b}(t)$ be the expected number of type $2$ particles at time $t$ who has speed $a$ before time $T_{u}\approx pt$ and speed $b$ after time $pt$.
Note that these particles are at level $[pa+(1-p)b]t$. First-moment computations yield that  there are around $e^{\left(\beta- \frac{a^2}{2 \sigma^2} \right)p t+o(t)}$ type $1$ particles at time $p t$ at level $a pt$, and a type $2$ particle  has probability $e^{\left(1-\frac{b^2}{2}\right)(1-p) t +o(t)}$ of having a descendant  at level $b(1-p)  t$ at time $(1-p)t$.
 Hence we get  $ \mathcal{N}_{p,a,b}(t)=\exp \left\{[(\beta -\frac{a^2}{2 \sigma^2})p+(1-\frac{b^2}{2})(1-p) ] t + o(t) \right\}$. In order to get the maximum,  we just  maximize $pa+(1-p)b$ among all the parameter $p,a,b$ such that $\left(\beta- \frac{a^2}{2 \sigma^2} \right)p  \geq 0$ and $\mathcal{N}_{p,a,b}(t) \geq 1$. This turns to the following optimization problem:
\begin{equation*}
v^{*}=\max  \left\{p a+(1-p) b: p \in [0,1], \left(\beta-\frac{a^2}{2 \sigma^2}\right)p \geq 0,    \left(\beta-\frac{a^2}{2 \sigma^2} \right)p+ (1-\frac{b^2}{2})(1-p) \geq 0\right\} .
  \end{equation*}
Write $\left(p^*, a^*, b^*\right)$ for the maximizer of the problem above. If $\left(\beta, \sigma^2\right) \in \mathscr{C}_I$, then $p^*=1,a^{*}=v$, and $v^*=v$; if $\left(\beta, \sigma^2\right) \in \mathscr{C}_{II}$,  then  $p^*=0, b^{*}=\sqrt{2}$ and   $v^*=\sqrt{2}$; if   $\left(\beta, \sigma^2\right) \in \mathscr{C}_{III}$, then
 $p^{*}=\frac{\sigma^2+\beta-2}{2\left(1-\sigma^2\right)(\beta-1)}$, $b^{*}=\sqrt{2 \frac{\beta-1}{1-\sigma^2}}$, $a^{*}=\sigma^2 b^{*}$, and  $v^{*}= \frac{\beta-\sigma^2}{\sqrt{2\left(1-\sigma^2\right)(\beta-1)}} $. Also if $(\beta,\sigma^2)=(1,1)$, then $p^{*}$ can be arbitrary in $[0,1]$,  $a^{*}=b^{*}=\sqrt{2}$ and  $v^{*}=\sqrt{2}$.

When $(\beta,\sigma^2) \in \mathscr{B}_{II,III}$, the maximizer is $p^{*}=0$, $v^{*}=b^{*}=\sqrt{2}$, and $a^{*}$ can be arbitrary.
Hence  each individual $u \in N^{2}_{t}$ near the maximal position should satisfy $p^{*}=T_{u}/t \approx 0$.
But now the order of $T_{u}$ really matters, and  if $T_{u} \gg 1$, $a^{*}$ should be $\sqrt{2} \sigma^2$ predicted by the formula for  the case $\mathscr{C}_{III}$.
When $(\beta,\sigma^2) \in \mathscr{B}_{I,III}$,
the maximizer   $p^{*}=1$, $v^{*}=a^{*}=v$, and $b^{*}$ can be arbitrary.
We can deduce that  each individual $u \in N^{2}_{t}$ near the maximal position satisfies $p^{*}=T_{u}/t \approx 1$.
The order of $t-T_{u}$ is  also important, and
  if $t-T_{u} \gg 1$, $b^{*}$ should be $\sqrt{2 \beta/\sigma^2}$ predicted by the formula for  the case $\mathscr{C}_{III}$.
 Similar problems  occur when we consider the boundary $\mathscr{B}_{I,II}$.
 The following computations, based on a  finer analysis, provide more insights for  localization of paths of extremal particles.

 \paragraph*{The case $(\beta,\sigma^2) \in \mathscr{B}_{II,III}$.} As in the computation above,
 the expected  number  of particles of type $1$ at time $s=o(t)$  at level $\lambda s$ is roughly
 $e^{(\beta-\frac{\lambda^2}{2 \sigma^2})s+O(\log t)} $.
A typical particle of type $2$ has probability $ e^{-(2-\sqrt{2}\lambda)s- \frac{(\sqrt{2}-\lambda)^2}{2} \frac{s^2}{t-s}+O(\log t)} $ of
having a descendant at level
  $\sqrt{2}t-\lambda s$ at time $t-s$.  Hence there are around
  \begin{equation}\label{eq-number-1}
    \exp \left\{  \left[\beta-\frac{\lambda^2}{2 \sigma^2}-(2-\sqrt{2}\lambda)\right]s - \frac{(\sqrt{2}-\lambda)^2}{2} \frac{s^2}{t-s}+O(\log t) \right\}
  \end{equation}
  particles of type $2$ at level $\sqrt{2}t $ at time $t$ such that $T_{u} \approx s$ and $X_{u}(T_{u}) \approx \lambda s$.
In order that the quantity in  \eqref{eq-number-1} is not zero as $  t\to\infty$,
  first we have to ensure that  $ \beta-\frac{\lambda^2}{2 \sigma^2} -(2 - \sqrt{2}\lambda) = -\frac{1}{2 \sigma^2}(\lambda-\sqrt{2}\sigma^2)^2  \geq 0 $
(here we used $\beta+\sigma^2=2$),
which forces $\lambda=\sqrt{2} \sigma^2$.
Secondly we have to ensure that
$\frac{s^2}{t-s}$ is bounded, i.e., $s=O(\sqrt{t})$.
In other words, the extremal particle $u \in N^2_{t}$ should satisfy $T_{u}=O(\sqrt{t})$
and $X_{u}(T_{u}) \approx  \sqrt{2} \sigma^2 T_{u}$.

\paragraph{The case $(\beta,\sigma^2) \in    \mathscr{B}_{I,III}$.}
The expected  number
 of type $1$ particles at time $s=t-o(t)$ at level $vs-a(t)$ (where  $a(t)$ will be determined later) is roughly $ e^{ \theta a(t) - \frac{a(t)^2}{2\sigma^{2} s} +O(\log t)}$.
A typical particle of type $2$ has  probability
$ e^{-\left[ (\frac{v^2}{2}-1)(t-s)+  v a(t)+  \frac{a(t)^2}{2(t-s)}\right]+O(\log t)} $ of having a descendant at level $v(t-s)+a(t)$ at time $t-s$.  Hence there are around
  \begin{equation}\label{eq-number-2}
    \exp\left\{  -\left[ \left(\frac{v^2}{2}-1\right)(t-s) + \frac{a(t)^2}{2 \sigma^{2} s}+\frac{a(t)^2}{2(t-s)}   \right] + (\theta-v)a(t) +O(\log t) \right\}
  \end{equation}
  particles of type $2$ at level $vt $ at time $t$ such that $T_{u} \approx s$ and $X_{u}(T_{u}) \approx v s-a(t)$.
  In order that the quantity in  \eqref{eq-number-2} is not zero as $  t\to\infty$,  using the prior knowledge $s \sim t$,  first we have to ensure that
  $a(t)$ has the same order as  $t-s$   or $\frac{a(t)^2}{t-s}$, and we get $a(t) =\Theta(t-s)$.
  We also need to ensure that $\frac{a(t)^2}{2s} = O(1)$, thus implies $a(t) = \Theta( t-s) = O( \sqrt{t})$.
   So,  letting $a(t)= a \sqrt{t}$ and  $t-s=\lambda \sqrt{t}$,
   we can rewrite \eqref{eq-number-2} as
  \begin{equation*}
    \exp\left\{ \left(\theta a  -\frac{(a+\lambda v)^2}{2\lambda}+\lambda\right)\sqrt{t}+   O(\log t)  \right\}.
  \end{equation*}
  We  now have to ensure
  $ \theta a  -\frac{(a+\lambda v)^2}{2\lambda}+\lambda  = -\frac{1}{2 \lambda}[a-(\theta-v)\lambda]^2  \geq 0 $
(here we used $\frac{1}{\beta} + \frac{1}{\sigma^2} =2$).
This  forces $a= (\theta-v)\lambda $ and hence $a(t)=a \sqrt{t}=(\theta- v)(t-s)$.
In other words, the extremal particle $u \in N^2_{t}$ should satisfy
$T_{u}=t-\Theta(\sqrt{t})$
and $X_{u}(T_{u}) \approx v T_{u}- (\theta-v)(t-T_{u})$.

\paragraph{The case $(\beta,\sigma^2) \in\mathscr{B}_{I,II}$}
We do the same computation as in the case $(\beta,\sigma^2)\in \mathscr{B}_{I,III}$, and still get that
the expected number of type $2$ particles  at level $vt $ at time $t$ satisfying $T_{u} \approx s$ and $X_{u}(T_{u}) \approx v s-a(t)$ is given by  \eqref{eq-number-2}.
However, now we are in the case $\mathscr{B}_{I,II}$ with $\beta \sigma^2=1$ and $\beta < 1$, and hence  $v=\sqrt{2}$ and $\theta=\sqrt{2} \beta $. Therefore
  \eqref{eq-number-2} becomes
\begin{equation}\label{eq-number-3}
  \exp\left\{  -\left[ \sqrt{2}(1-\beta)a(t)+ \frac{a(t)^2}{2\sigma^{2} s}+\frac{a(t)^2}{2(t-s)}   \right]    +O(\log t) \right\}.
\end{equation}
In order that \eqref{eq-number-3} tends to a nonzero limit, we need $a(t)=O(\log t)$. This is very different from the case $(\beta,\sigma^2)=(1,1)$ in \cite{Belloum22},
in which case we can get $a(t)=O(\sqrt{t})$
 (and $s$ is of order $t$).
Now the simple first moment computations can not tell us more. However, we can still make a \textit{guess}.
Notice that the extremal particle $u \in N^{2}_{t}$ are also extremal (up to $O(\log t)$) at time $T_{u}$.
This fact reminds us of the decreasing variances case in  \cite{FZ12b}: The maximum at time $t$ is the highest value among the descendants of the maximal particle at time $t/2$.
So   we guess that $\max_{u \in N_{t}}X_{u}(t) \approx \max_{T_u \in [0,t]}\{\sqrt{2}T_{u} - \frac{3}{2 \theta} \log T_{u} + \sqrt{2}(t-T_u)- \frac{3}{2 \sqrt{2}} \log (t-T_{u}) \}$.
As $\theta<\sqrt{2}$,
we should choose $T_{u}$ small.   So we expect $T_{u}=O(1)$ and $ \max_{u \in N_{t}}X_{u}(t)$ should be $\sqrt{2}t-\frac{3}{2\sqrt{2}}\log t$.

\section{Preliminary results}

\subsection{Brownian motions estimates}

We always use $\left\{(B_t)_{t\geq 0}; \mathbf{P}\right\}$ to denote  a standard Brownian motion (BM) starting from the origin.  Here is a useful upper bound for the probability that a Brownian  bridge is below a line,
see \cite[Lemma 2]{Bramson78} for a proof.

\begin{lemma}\label{lem-bridge-estimate-0}
Consider a line segment with endpoints $(0,x_{1}), (t,x_{2})$
with  $x_{1},x_{2} \geq 0$. We have
  \begin{equation*}
 \mathbf{P}
    \left( B_{s} \leq \frac{s}{t} x_{2}+ \frac{t-s}{t}x_{1}, \,\forall s \in [0,t] \, \bigg| B_{t}=0 \right) = 1 - e^{-\frac{2 x_{1}x_{2}}{t}} \leq \frac{2x_{1}x_{2}}{t}.
  \end{equation*}
\end{lemma}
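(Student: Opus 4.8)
The plan is to prove this by the reflection principle for Brownian bridges, after a deterministic change of coordinates that straightens the barrier.

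First I would write $\ell(s):=\frac{s}{t}x_{2}+\frac{t-s}{t}x_{1}$ for the linear interpolant of $(0,x_{1})$ and $(t,x_{2})$ and pass to the shifted process $\widetilde B_{s}:=B_{s}-\ell(s)$. Since $\ell$ is deterministic, the conditional law of $(\widetilde B_{s})_{0\le s\le t}$ given $B_{t}=0$ equals that of $(W_{s}-\ell(s))_{0\le s\le t}$, where $W$ is a standard Brownian bridge from $0$ to $0$ on $[0,t]$; in particular it is a Brownian bridge running from $-x_{1}\le 0$ at time $0$ to $-x_{2}\le 0$ at time $t$. Under this substitution the event $\{B_{s}\le\ell(s)\ \forall s\in[0,t]\}$ becomes $\{\widetilde B_{s}\le 0\ \forall s\in[0,t]\}$, i.e.\ the bridge from $-x_{1}$ to $-x_{2}$ never reaches level $0$. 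Writing $\tau_{0}$ for the first hitting time of $0$, it therefore suffices to compute $\mathbf{P}(\tau_{0}\le t\mid B_{t}=0)$ for the shifted process and subtract it from $1$.

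The core step is the reflection computation. Write $\mathbf{P}_{a}$ for the law of Brownian motion started at $a$, and set $a:=-x_{1}\le 0$, $b:=-x_{2}\le 0$. Reflecting the path after $\tau_{0}$ is a measure-preserving involution on path space that swaps $B_{t}$ with $-B_{t}$, and because every path from $a\le 0$ to $-b=x_{2}\ge 0$ must cross $0$, it identifies $\{\tau_{0}\le t,\ B_{t}\in\dif b\}$ with $\{B_{t}\in\dif(-b)\}$; hence
\[
  \mathbf{P}_{a}\!\left(\tau_{0}\le t,\ B_{t}\in\dif b\right)=p_{t}(a,-b)\,\dif b,\qquad p_{t}(y,z):=\frac{1}{\sqrt{2\pi t}}\exp\!\Big(-\frac{(z-y)^{2}}{2t}\Big).
\]
Dividing by the density $p_{t}(a,b)$ of $\{B_{t}=b\}$ and simplifying the exponent via $(b+a)^{2}-(b-a)^{2}=4ab$ gives
\[
  \mathbf{P}_{a}\!\left(\tau_{0}\le t\mid B_{t}=b\right)=\exp\!\Big(-\frac{2ab}{t}\Big)=\exp\!\Big(-\frac{2x_{1}x_{2}}{t}\Big),
\]
since $ab=x_{1}x_{2}\ge 0$. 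Consequently the probability in the lemma equals $1-e^{-2x_{1}x_{2}/t}$, and the asserted bound follows from the elementary inequality $1-e^{-y}\le y$ for $y\ge 0$.

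The step that needs the most care is that $\{B_{t}=0\}$ has probability zero, so the reflection argument should be run at the level of transition densities (equivalently, conditioned on $\{|B_{t}|<\varepsilon\}$ and then sent $\varepsilon\downarrow 0$) rather than naively. One should also note that $\{\sup_{s\in[0,t]}\widetilde B_{s}\le 0\}$ and $\{\sup_{s\in[0,t]}\widetilde B_{s}<0\}=\{\tau_{0}>t\}$ differ by a null set, since the supremum of the bridge has a continuous distribution, so the distinction between the non-strict barrier and the strict one is immaterial. The degenerate cases $x_{1}=0$ or $x_{2}=0$ are consistent with the formula ($1-e^{0}=0$) and can be checked directly, since a bridge started at $0$ almost surely exceeds any barrier that is $0$ at the origin in every right neighbourhood of $0$. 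Everything else is routine.
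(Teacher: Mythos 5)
Your proof is correct: the reduction to a bridge from $-x_{1}$ to $-x_{2}$ staying nonpositive, the reflection-at-the-density-level computation giving $e^{-2x_{1}x_{2}/t}$ for the hitting probability, and the elementary bound $1-e^{-y}\le y$ together yield exactly the stated identity and inequality, and your handling of the null-set and degenerate-endpoint issues is sound. The paper does not prove this lemma itself but cites Bramson's Lemma 2, and your argument is precisely the classical reflection-principle derivation behind that citation, so there is nothing to add.
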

As a corollary, we also have an estimate for the probability that
a BM stays below a line and ends up in a finite interval.
For all $K \geq 1$ and $y \geq 0$ we have
\begin{equation}\label{eq-brownian-estimate}
\mathbf{P}\left(B_s \leq K, s \leq t, B_t- K\in [-y-1,-y]\right) \leq C \frac{K (y+1)}{ (1+t)^{3 / 2}} .
\end{equation}
In fact, the desired probability is  less than
the product of $\mathbf{P}(B_{t}-K +y\in [-1,0]) \leq \frac{1}{\sqrt{2\pi t}}$ and $\max_{z \in [0,1]}\mathbf{P} ( B_{s} \leq K, \,\forall s \in [0,t] \, | B_{t}=K-y-z ) \leq 2K(y+1)/t$ by Lemma \ref{lem-bridge-estimate-0} and scaling invariance properties of Brownian bridge.

Later in the proof of Lemma \ref{lem-born-before-t-t1/2}, we will use a slight modification of Lemma \ref{lem-bridge-estimate-0} as follows. For completeness,
 we give its proof in the appendix A.

\begin{lemma}\label{lem-bridge-estimate}
Let $\widetilde{m}_{t}=vt- w_{t}$, where $w_t = \Theta( \log t)$.
Assume that $\sigma^2 \leq 1$. Fix $K \geq 0$.
Then  for  sufficiently large $t$,
  $s \in [t- \sqrt{t}\log t, t]$ and $x \in \mathbb{R}$,
 \begin{equation}\label{eq-bridge-estimate}
    \mathbf{P}\left(  \sigma B_{r} \leq v r + K, \forall r \leq s |  \sigma B_s + B_{t}-B_{s}= \widetilde{m}_{t} + x  \right)
      \lesssim_{K,\beta,\sigma}  \frac{t-s + w_{t}  +|x|}{t} .
  \end{equation}
\end{lemma}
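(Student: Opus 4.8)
The plan is to reduce the statement to the Brownian‐bridge estimate of Lemma \ref{lem-bridge-estimate-0} by conditioning on the value of the path at time $s$ and exploiting that on $[s,t]$ the path is a free (unscaled) Brownian increment whose endpoint is pinned. Write the conditioning event as $\sigma B_s = y$ for an appropriate $y$; then the event $\{\sigma B_s + (B_t - B_s) = \widetilde m_t + x\}$ together with $\{\sigma B_s = y\}$ forces $B_t - B_s = \widetilde m_t + x - y$. Since $B_t - B_s$ is independent of $(B_r)_{r\le s}$ and Gaussian with variance $t-s$, the conditional law of $(\sigma B_r)_{r\le s}$ given the full conditioning is that of a Brownian motion with diffusion coefficient $\sigma^2$ pinned at $\sigma B_s = y$, i.e.\ a Brownian bridge (rescaled) from $(0,0)$ to $(s,y)$. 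I would integrate over $y$ against the Gaussian density of $\sigma B_s$, reweighted by the density of $B_t - B_s$ at $\widetilde m_t + x - y$; the resulting expression is
\[
\mathbf P\big(\sigma B_r \le vr + K,\ \forall r\le s \,\big|\, \sigma B_s + B_t - B_s = \widetilde m_t + x\big)
= \frac{\int_{\mathbb R} g_{\sigma^2 s}(y)\, g_{t-s}(\widetilde m_t + x - y)\, q_s(y)\, \dif y}{\int_{\mathbb R} g_{\sigma^2 s}(y)\, g_{t-s}(\widetilde m_t + x - y)\, \dif y},
\]
where $g_a$ is the centered Gaussian density with variance $a$ and $q_s(y) = \mathbf P(\sigma B_r \le vr + K,\ \forall r \le s \mid \sigma B_s = y)$.

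Next I would bound $q_s(y)$ using Lemma \ref{lem-bridge-estimate-0}. After the affine change of variables that straightens the line $r \mapsto vr + K$ (subtract the line, noting $\sigma^2 \le 1$ guarantees $v = \sqrt{2\beta\sigma^2}$ and the geometry is the ``bridge below a line'' setup with nonnegative endpoint heights $K$ at time $0$ and $vs + K - y$ at time $s$), Lemma \ref{lem-bridge-estimate-0} gives, when both endpoint gaps are nonnegative,
\[
q_s(y) \;\le\; \frac{2 K\,(vs + K - y)}{\sigma^2 s}\;\lesssim_{K,\beta,\sigma}\; \frac{vs - y + K}{s},
\]
and $q_s(y) = 0$ (or is trivially $\le 1$, handled separately) when $y > vs + K$. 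The key point is that the numerator is linear in $vs - y$. The dominant contribution to both integrals comes from $y$ near the maximizer of $g_{\sigma^2 s}(y)\, g_{t-s}(\widetilde m_t + x - y)$, which is the Gaussian-tilted value $y^\star \approx \frac{\sigma^2 s}{\sigma^2 s + (t-s)}(\widetilde m_t + x) \approx \widetilde m_t + x - \Theta\!\big(\frac{t-s}{s}(\widetilde m_t+x)\big)$ up to $O(\sqrt{t-s})$ fluctuations; since $\widetilde m_t = vt - w_t$, one computes $vs - y^\star + K = v s - \widetilde m_t - x + \Theta(\tfrac{t-s}{s}\widetilde m_t) + K \approx v(s-t) + w_t - x + \Theta(t-s) = \Theta(t - s + w_t + |x|)$, using $s \in [t - \sqrt t \log t, t]$ so that $\frac{t-s}{s}(vt) \lesssim \sqrt t \log t \cdot v \lesssim t$ — actually more carefully $\tfrac{t-s}{s} vt \lesssim t-s$ fails, so I must track this term; it is $\le v\sqrt t \log t$, which after division by $s \asymp t$ contributes $\lesssim \log t / \sqrt t \cdot$ (lower order), and in any case is absorbed into $w_t + (t-s)$ up to constants since $t - s$ can be as large as $\sqrt t \log t \gg \log t = \Theta(w_t)$. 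Thus $\frac{vs - y + K}{s} \lesssim \frac{t - s + w_t + |x|}{t}$ on the bulk of the integral, and the Gaussian fluctuations of $y$ around $y^\star$ of size $O(\sqrt{t-s})$ only add $O(\sqrt{t-s}/t) \le O((t-s+1)/t)$, which is again of the claimed order. Carrying the bound through the ratio (the denominator cancels the normalization) yields \eqref{eq-bridge-estimate}.

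The main obstacle I expect is making the heuristic ``the integral localizes near $y^\star$'' rigorous with the correct linear-in-$(t-s)$, linear-in-$|x|$, linear-in-$w_t$ dependence, rather than a cruder bound. Concretely: (i) one must handle the region $y > vs + K$ where Lemma \ref{lem-bridge-estimate-0} does not apply and $q_s(y)$ must be bounded by $1$ — but there the Gaussian weight $g_{\sigma^2 s}(y) g_{t-s}(\widetilde m_t + x - y)$ is exponentially small (since $vs + K < \widetilde m_t + x$ would need $s$ close to $t$ only barely; when $s < t$ one has $vs < vt - w_t$ roughly, so this region sits in the tail), giving a negligible contribution; (ii) one must control the Gaussian integral $\int (vs - y + K)^+ g_{\sigma^2 s}(y) g_{t-s}(\widetilde m_t + x - y)\dif y$ divided by $\int g_{\sigma^2 s}(y) g_{t-s}(\widetilde m_t + x - y)\dif y$, which equals $\mathbf E[(vs - \mathcal G + K)^+]$ for a Gaussian $\mathcal G$ with mean $y^\star$ and variance $\frac{\sigma^2 s (t-s)}{\sigma^2 s + (t-s)} \le t - s$; bounding this by $|vs - y^\star + K| + \mathbf E|\mathcal G - y^\star| \lesssim (t - s + w_t + |x|) + \sqrt{t-s}$ finishes it, since $\sqrt{t-s} \lesssim 1 + (t-s) \lesssim t - s + w_t + |x|$ (as $w_t = \Theta(\log t) \ge 1$ for large $t$). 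The computation of $y^\star$ and the mean-gap $vs - y^\star + K = \Theta(t - s + w_t + |x|)$ is the crux and must be done with care about signs, using $\widetilde m_t = vt - w_t$ and $v s - v t = -v(t-s) = -\Theta(t-s)$.
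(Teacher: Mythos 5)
Your proposal is correct, and it reaches \eqref{eq-bridge-estimate} by a route that differs from the paper's in its decomposition, though both hinge on the same key input, Lemma \ref{lem-bridge-estimate-0}. You disintegrate over the intermediate value $y=\sigma B_s$ (legitimate, since $B_t-B_s$ is independent of $(B_r)_{r\le s}$, so given $\sigma B_s=y$ the path on $[0,s]$ is a bridge and the weight is the product density $g_{\sigma^2 s}(y)g_{t-s}(\widetilde m_t+x-y)$), apply the bridge estimate pointwise to get $q_s(y)\lesssim_{K,\sigma}(vs+K-y)^+/s$, and then bound the resulting posterior Gaussian expectation $\mathbf{E}[(vs+K-\mathcal G)^+]\le |vs+K-y^\star|+\sqrt{\operatorname{Var}(\mathcal G)}$ with $y^\star=\frac{\sigma^2 s}{\sigma^2 s+(t-s)}(\widetilde m_t+x)$ and $\operatorname{Var}(\mathcal G)\le t-s$; since $\sigma^2\le 1$ gives $|vs+K-y^\star|\lesssim_K (t-s)+w_t+|x|$ and $\sqrt{t-s}\lesssim (t-s)+w_t$, dividing by $\sigma^2 s\asymp t$ yields the claim. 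The paper instead avoids the disintegration: it subtracts from $\sigma B_r$ its projection onto the conditioning variable $\sigma B_s+B_t-B_s$ (so the conditioning becomes a deterministic tilt of the barrier), splits the remainder into the bridge $\sigma B_r-\frac rs\sigma B_s$ plus an independent Gaussian $Z$ with $\operatorname{Var}(Z)\le 2(t-s)$, discards the event $\{|Z|>10(t-s+\log t)\}$ at cost $t^{-2}$, and applies Lemma \ref{lem-bridge-estimate-0} once with the shifted endpoint. Your route buys an exact identification of the posterior mean and variance and a clean linear bound in place of the paper's crude tail event; the paper's route buys a shorter computation with no ratio-of-integrals bookkeeping. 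Two small repairs to your write-up: the region $y>vs+K$ contributes exactly zero (the terminal constraint $\sigma B_s\le vs+K$ is already violated), so no tail estimate is needed there; and the mean gap should be asserted only as an upper bound $|vs+K-y^\star|\lesssim (t-s)+w_t+|x|$, not as a $\Theta$ (when $\sigma^2=1$ the $(t-s)$ contributions cancel and the gap is of order $w_t+|x|$ only). The momentary worry in your middle paragraph about $\frac{t-s}{s}vt$ is unfounded: since $s\ge t-\sqrt t\log t$ implies $s\asymp t$, that term is $O(t-s)$, and in fact it combines with $-v(t-s)$ to give $v(t-s)(\sigma^{-2}-1)\ge 0$, which is where the hypothesis $\sigma^2\le 1$ enters exactly as in the paper.
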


\subsection{Branching Brownian motions estimates}
We always use $\{(\mathsf{X}_{u}^{\beta,\sigma^2}(t):u \in \mathsf{N}_{t})_{t\geq 0},  \mathsf{P}\}$ to denote a BBM  starting from one  particle at the origin with branching rate $\beta$ and diffusion coefficient $\sigma^2$.
For the BBM, there is an upper envelope through which particles find it difficult to pass.
In fact, letting $a_t=\frac{3}{2 \theta} \log (t+1)$,  for  some constant  $C>0$ and  for all $t >0, K>0$,
\begin{equation}\label{eq-upper-envelope}
\mathsf{P}\left(\exists s \leq t, u \in \mathsf{N}_s: \mathsf{X}^{\beta,\sigma^2}_u(s) \geq v s-a_t+a_{t-s}+K\right) \leq C(K+1) e^{-\theta K},
\end{equation}
see  \cite[(6.1)]{BM21} (or  \cite[Lemma 3.1]{Mallein15}).
In particular, we have
\begin{equation}\label{eq-upper-envelope-0}
  \mathsf{P}\left(\exists s \leq t, u \in \mathsf{N}_s: \mathsf{X}^{\beta,\sigma^2}_u(s) \geq v s +K\right) \leq C(K+1) e^{-\theta K} .
  \end{equation}

We collect several results for the standard BBM
$(\mathsf{X}_{u}(t):u \in \mathsf{N}_{t})_{t\geq 0},$ (i.e., $\beta=\sigma^2=1$) that will be used frequently.
Recall that $\mathsf{M}_{t}=  \max_{u \in \mathsf{N}_{t}} \mathsf{X}_{u}(t) $.
The following estimate of the upper tail of $\mathsf{M}_{t}$
was proved in \cite[Corollary 10]{ABK12}

\begin{lemma}\label{lem-Max-BBM-tail}
 For $x>1$ and $t \geq t_o$ (for $t_o$ a numerical constant),
\begin{equation*}
\mathsf{P}\left(\mathsf{M}_{t}  \geq \sqrt{2}t- \frac{3}{2\sqrt{2}} \log t + x \right) \leq \rho \cdot x \cdot \exp \left(-\sqrt{2} x-\frac{x^2}{2 t}+\frac{3}{2 \sqrt{2}} x \frac{\log t}{t}\right)
\end{equation*}
for some constant $\rho>0$.
\end{lemma}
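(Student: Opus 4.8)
The plan is a refined first-moment estimate. By the many-to-one lemma,
\[
\mathsf E\bigl[\#\{u\in\mathsf N_t:\mathsf X_u(t)\ge m_t+x\}\bigr]=e^{t}\,\mathbf P\bigl(B_t\ge m_t+x\bigr),
\]
so Markov's inequality together with the Gaussian tail bound $\mathbf P(B_t\ge y)\le\frac{\sqrt t}{\sqrt{2\pi}\,y}e^{-y^2/(2t)}$ and the expansion
\[
\frac{(m_t+x)^2}{2t}=t-\tfrac32\log t+\sqrt2\,x+\frac{(x-\tfrac{3}{2\sqrt2}\log t)^2}{2t}
\]
gives $\mathsf P(\mathsf M_t\ge m_t+x)\le e^t\mathbf P(B_t\ge m_t+x)\le Ct\, e^{-\sqrt2 x-x^2/(2t)+\frac{3}{2\sqrt2}x\log t/t}$. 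This already has the correct exponential rate and Gaussian correction, but the prefactor is of order $t$ rather than the claimed order $x$; when $x\gtrsim t$ this crude bound is itself of the desired form for suitable $\rho$, so from now on I would assume $1\le x\lesssim t$. The missing factor, of order $(x+1)/t$, reflects the fact that the ancestral trajectory of a particle near the maximum is entropically repelled below a Bramson-type curve.

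To extract this factor I would restrict the first moment to particles whose ancestral path stays below the concave curve $\bar L(s):=\sqrt2\,s-a_t+a_{t-s}+x+1$ appearing in \eqref{eq-upper-envelope} (with $a_t=\frac{3}{2\sqrt2}\log(t+1)$, so $\bar L(0)=x+1$ and $\bar L(t)>m_t+x$). The event that some particle ever exceeds $\bar L$ has probability $\le C(x+2)e^{-\sqrt2(x+1)}\lesssim(x+1)e^{-\sqrt2 x}$ by \eqref{eq-upper-envelope}; it is crucial that $\bar L$ terminates at height $\approx m_t+x$, since a lower terminal height would only yield a crossing probability of order $e^{-\sqrt2 x/2}$, far too large. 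On the complementary event one is reduced to bounding $e^{t}\,\mathbf P\bigl(B_s\le\bar L(s)\ \forall s\le t,\ B_t\in[m_t+x,m_t+x+1]\bigr)$. Conditioning on the endpoint $B_t=w\in[m_t+x,m_t+x+1]$, subtracting the bridge mean $\tfrac st w$, and using $\mathbf P(B_t\in\dif w)\le\frac{1}{\sqrt{2\pi t}}e^{-w^2/(2t)}\dif w$ with the expansion above, the problem reduces to a Brownian bridge $\beta$ from $0$ to $0$ on $[0,t]$ staying below the profile
\[
g(s)=\Bigl(a_{t-s}-\tfrac{t-s}{t}a_t\Bigr)+\tfrac{t-s}{t}x+\Theta(1),
\]
which is of order $x$ in the bulk and of order $a_{t-s}+1$ near $s=t$. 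Bounding $\mathbf P(\beta_s\le g(s)\ \forall s)\lesssim(x+1)/t$ then produces exactly the right prefactor, and, combined with the Gaussian endpoint density, yields a restricted first moment $\lesssim (x+1)\,e^{-\sqrt2 x-x^2/(2t)+\frac{3}{2\sqrt2}x\log t/t}$.

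The main obstacle is precisely that last bridge estimate: since $g$ is concave rather than linear, Lemma~\ref{lem-bridge-estimate-0} does not apply directly, and a crude linear majorant of $g$ either blows up in the bulk or destroys the entropic-repulsion factor (giving, say, a prefactor of order $(\log t)^2$). The correct route is Bramson's barrier technology — split $[0,t]$ into a bulk part, on which one uses the reflection/ballot estimate (as in Lemma~\ref{lem-bridge-estimate-0} and \eqref{eq-brownian-estimate}) with a linear majorant of $g$ to gain the factor $1/t$ together with the $x$ coming from the left endpoint, and a short window near $s=t$ on which one controls the passage below the logarithmic barrier $a_{t-s}+\Theta(1)$, the coefficient $\tfrac{3}{2\sqrt2}$ being tuned precisely so that this window contributes only a bounded factor. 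Carrying out this decomposition while keeping track of the logarithmic and Gaussian corrections is the technical heart, and is exactly the computation behind \cite[Corollary 10]{ABK12}. Finally, adding the barrier-crossing contribution $\lesssim(x+1)e^{-\sqrt2 x}$ to the restricted first moment, absorbing bounded factors of the form $e^{O((\log t)^2/t)}$ and all constants (and using $x+1\le 2x$ for $x\ge1$) into $\rho$, and recalling that $x\gtrsim t$ was dispatched by the crude bound, yields the stated inequality for all $x\ge1$ and $t\ge t_o$.
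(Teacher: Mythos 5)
The paper offers no argument for this lemma at all: it is quoted directly from \cite[Corollary 10]{ABK12}, so there is no internal proof to compare yours against. Judged on its own, your outline follows the standard first-moment-with-barrier strategy that underlies the cited result, but it is not a self-contained proof: you explicitly defer the decisive step (the bridge estimate below the concave Bramson curve, split into bulk and terminal window) to ``the computation behind \cite[Corollary 10]{ABK12}'', i.e.\ to the very result being proved, so at best you have reduced the lemma to its own reference.

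There is also a concrete gap in the decomposition itself, in the range $\sqrt{t}\ll x\ll t$. You bound the barrier-crossing event via \eqref{eq-upper-envelope} by $C(x+2)e^{-\sqrt{2}(x+1)}$, which carries no Gaussian factor, and this cannot be improved: a crossing at an early time $s=O(x)$ already occurs with probability of order $e^{-\sqrt{2}x}$, so the loss sits in the union bound, not in the estimate. The claimed right-hand side is $\rho\,x\,e^{-\sqrt{2}x-\frac{x^2}{2t}+\frac{3}{2\sqrt{2}}\frac{x\log t}{t}}$, so the crossing term is dominated by it only when $\frac{x^2}{2t}-\frac{3}{2\sqrt{2}}\frac{x\log t}{t}=O(1)$, i.e.\ for $x\lesssim\sqrt{t}$; beyond that it exceeds the target by a factor $e^{(1+o(1))x^2/(2t)}\to\infty$, while your crude unrestricted first moment (prefactor of order $t$) only covers $x\gtrsim t$. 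So the intermediate range is not handled, and it is a range the paper actually uses: Lemma \ref{lem-Max-BBM-tail} is applied in the proof of Lemma \ref{lem-born-after-t1/2} with $x$ up to order $\sqrt{t}\log t$. The repair is not automatic either: raising the barrier by about $x^2/(2\sqrt{2}t)$ fixes the crossing term but inflates the ballot prefactor to order $x^3/t$ instead of $x$. One must either estimate the event ``cross the curve \emph{and} still reach $m_t+x$'' with a refined computation at the crossing time, or go through Bramson's F-KPP machinery as \cite{ABK12} do; as written, your final claim that the inequality holds ``for all $x\ge 1$'' is unjustified.
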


  As a  consequence of Lemma \ref{lem-Max-BBM-tail}, we have
\begin{equation}\label{eq-Max-BBM-tail-2}
  \mathsf{P}\left(\mathsf{M}_{t}  \geq \sqrt{2}t- \frac{3}{2\sqrt{2}} \log t + x \right) \leq \rho \cdot x   \exp \left(-\sqrt{2} x + 1 \right) ,
\end{equation}
 for all $x > 1$ and $t>t_0$, since   $-\frac{x^2}{2 t}+\frac{3}{2 \sqrt{2}} x \frac{\log t}{t} \leq 1$, for all $  t > 100$ and  $x > 1$.

The following estimates for the Laplace functionals of standard BBMs
can be found in \cite[Corollary 2.9]{Belloum22} and \cite[Lemma 3.7]{BM21}.
One can also obtain the same results
from the large deviations probability  $\mathsf{P}(\mathsf{M}_{t} > \rho t + x )$ with $\rho \geq \sqrt{2}$ and the conditioned convergence of the gap processes  \eqref{eq-decoration-process}.

\begin{lemma}\label{thm-Laplace-BBM-order}
Let  $\varphi \in \mathcal{T}$, $R>0 $ and $\rho > \sqrt{2}$. Then the following assertions hold.
\begin{enumerate}[(i)]
  \item   For $x \in [-R\sqrt{t}, - \frac{1}{R}\sqrt{t} ]$ uniformly
  \begin{equation*}
1-\mathsf{E}\left(e^{-\sum_{u \in \mathsf{N}_t} \varphi\left(x+\mathsf{X}_u(t)-\sqrt{2} t\right)}\right)= \gamma_{\sqrt{2}}(\varphi)\frac{ (-x )}{t^{3 / 2}}e^{\sqrt{2} x-\frac{x^2}{2 t}} (1+o(1)),
\end{equation*}
as $t \rightarrow \infty$, where $\gamma_{\sqrt{2}}(\varphi)= \displaystyle \sqrt{2}C_{\star} \int e^{-\sqrt{2} z}\left(1-\mathsf{E}\left(e^{-\langle\mathfrak{D}^{\sqrt{2}}, \varphi(\cdot+z)\rangle}\right)\right) \dif z$.
\item For $|x|  \leq R\sqrt{t} $ uniformly
\begin{equation*}
  1-\mathsf{E}\left(e^{-\sum_{u \in \mathsf{N}_t} \varphi\left(x+\mathsf{X}_u(t)-\rho t\right)}\right)=
   \gamma_\rho(\varphi) \frac{e^{\left(1-\rho^2 / 2\right) t}}{\sqrt{t}} e^{\rho x-\frac{x^2}{2 t}} (1+o(1)),
\end{equation*}
 as $t \rightarrow \infty$, where $
  \gamma_\rho(\varphi)=  \displaystyle\frac{C(\rho)   }{\sqrt{2 \pi }}\int e^{-\rho z}\left(1- \mathsf{E} (e^{-\left\langle\mathfrak{D}^\rho, \varphi(\cdot+z)\right\rangle})  \right) \dif z$.
\end{enumerate}
\end{lemma}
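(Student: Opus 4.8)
\emph{Proof idea.}
I would establish both parts simultaneously by reducing the Laplace functional to a one-dimensional integral against the law of $\mathsf M_t$. Fix $\varphi\in\mathcal T$ with $A:=\inf\mathrm{supp}(\varphi)$, write $\mathcal G_t:=\sum_{u\in\mathsf N_t}\delta_{\mathsf X_u(t)-\mathsf M_t}$ for the gap process (supported on $(-\infty,0]$), and set $H_t(z):=\langle\mathcal G_t,\varphi(\cdot+z)\rangle=\sum_{u\in\mathsf N_t}\varphi(z+\mathsf X_u(t)-\mathsf M_t)$. Since $\mathsf X_u(t)\le\mathsf M_t$, $H_t(z)$ is a.s.\ finite and $H_t(z)=0$ for $z<A$, while $\sum_{u\in\mathsf N_t}\varphi(x+\mathsf X_u(t)-\rho t)=H_t(x+\mathsf M_t-\rho t)$, which vanishes unless $\mathsf M_t\ge\rho t-x+A$. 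Conditioning on $\mathsf M_t$ and setting $\mathsf M_t=\rho t-x+z$ gives
\begin{equation*}
1-\mathsf E\Big(e^{-\sum_{u\in\mathsf N_t}\varphi(x+\mathsf X_u(t)-\rho t)}\Big)=\int_A^{\infty}\mathsf E\big[\,1-e^{-H_t(z)}\,\big|\,\mathsf M_t=\rho t-x+z\,\big]\;\mathsf P\big(\mathsf M_t\in\rho t-x+\dif z\big).
\end{equation*}
(One may alternatively just quote \cite[Corollary~2.9]{Belloum22} and \cite[Corollary~2.9]{BM21}, of which the lemma is a restatement.)

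The two factors would be handled separately. For the conditional expectation I would use \eqref{eq-decoration-process} in a \emph{local} form: conditioned on the precise value $\mathsf M_t=\rho t-x+z$ --- which for $x\in[-R\sqrt t,-\tfrac1R\sqrt t]$ and $z$ bounded is $(\rho+o(1))t$, still in the regime producing $\mathfrak D^\rho$ --- the gap process $\mathcal G_t$ converges in law to $\mathfrak D^\rho$ uniformly in such $x$ and in $z$ over compacts, whence $\mathsf E[1-e^{-H_t(z)}\mid\mathsf M_t=\rho t-x+z]\to 1-\mathsf E[e^{-\langle\mathfrak D^\rho,\varphi(\cdot+z)\rangle}]$. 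For the law of $\mathsf M_t$, put $w:=-x+z=\Theta(\sqrt t)$. For $\rho>\sqrt2$, a many-to-one first-moment computation together with a matching Chauvin--Rouault-type lower bound yields $\mathsf P(\mathsf M_t\ge\rho t+w)\sim\kappa(\rho)\,t^{-1/2}e^{(1-\rho^2/2)t}e^{-\rho w-w^2/(2t)}$ for some $\kappa(\rho)>0$, and I would then \emph{define} $C(\rho):=\sqrt{2\pi}\,\rho\,\kappa(\rho)$. For $\rho=\sqrt2$, the upper bound of Lemma~\ref{lem-Max-BBM-tail} and the classical matching lower bound (Bramson; Arguin--Bovier--Kistler) give, writing $\sqrt2\,t+w=m_t+(w+\tfrac{3}{2\sqrt2}\log t)$ with $w+\tfrac{3}{2\sqrt2}\log t\sim w$, that $\mathsf P(\mathsf M_t\ge\sqrt2\,t+w)\sim C_\star\,w\,t^{-3/2}e^{-\sqrt2\,w-w^2/(2t)}$. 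Differentiating in $w$ and using that $w=(-x)(1+o(1))$ and $e^{-w^2/(2t)}=e^{-x^2/(2t)}(1+o(1))$ uniformly for $z$ in compacts (as $-x=\Theta(\sqrt t)$), I obtain
\begin{equation*}
\mathsf P\big(\mathsf M_t\in\rho t-x+\dif z\big)=
\begin{cases}
\sqrt2\,C_\star\,\dfrac{-x}{t^{3/2}}\,e^{\sqrt2 x-\frac{x^2}{2t}}\,e^{-\sqrt2 z}\,(1+o(1))\,\dif z,&\rho=\sqrt2,\\[2.2ex]
\dfrac{C(\rho)}{\sqrt{2\pi}}\,\dfrac{e^{(1-\rho^2/2)t}}{\sqrt t}\,e^{\rho x-\frac{x^2}{2t}}\,e^{-\rho z}\,(1+o(1))\,\dif z,&\rho>\sqrt2.
\end{cases}
\end{equation*}

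Substituting both factors into the integral and letting $t\to\infty$, the $x$-dependent prefactor comes out and the remaining $z$-integral converges to $\int_A^\infty e^{-\rho z}\big(1-\mathsf E[e^{-\langle\mathfrak D^\rho,\varphi(\cdot+z)\rangle}]\big)\dif z$, which is finite (the integrand vanishes for $z<A$ since $\mathfrak D^\rho$ lives on $(-\infty,0]$, and is $\le e^{-\rho z}$ for $z\ge A$). Comparing with the definitions of $\gamma_{\sqrt2}(\varphi)$ and $\gamma_\rho(\varphi)$, this produces exactly $\gamma_{\sqrt2}(\varphi)\,\tfrac{-x}{t^{3/2}}\,e^{\sqrt2 x-\frac{x^2}{2t}}$ in case (i) and $\gamma_\rho(\varphi)\,\tfrac{e^{(1-\rho^2/2)t}}{\sqrt t}\,e^{\rho x-\frac{x^2}{2t}}$ in case (ii). To justify the interchange of limit and integral I would split at $|z|=L$: on $\{|z|\le L\}$ the two expansions above are uniform; on $\{z>L\}$ one uses $1-e^{-H_t(z)}\le 1$ together with the exponential tail $e^{-\rho z}$ of the $\mathsf M_t$-density (cf.\ \eqref{eq-Max-BBM-tail-2} for $\rho=\sqrt2$ and the tilted tail for $\rho>\sqrt2$); and on $\{z<-L\}\subset\{z<A\}$ the term is identically $0$.

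I expect the main obstacle to be the \emph{local} limit input: upgrading \eqref{eq-decoration-process}, which conditions only on $\mathsf M_t\ge\varrho t$, to convergence of the gap process conditioned on the precise location of $\mathsf M_t$ throughout the window $\rho t+\Theta(\sqrt t)$, with the $o(1)$ uniform over that window and over $x\in[-R\sqrt t,-\tfrac1R\sqrt t]$. Tightly linked is the need for \emph{two-sided} moderate-deviation asymptotics of $\mathsf M_t$ carrying the explicit Gaussian correction $e^{-w^2/(2t)}$, uniformly for $w\in[\tfrac1R\sqrt t,R\sqrt t]$: Lemma~\ref{lem-Max-BBM-tail} supplies the upper bound when $\rho=\sqrt2$, and the matching lower bound, as well as the whole case $\rho>\sqrt2$, should follow from a spine change of measure \`a la Chauvin--Rouault. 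Everything else is routine, and one may in any case bypass these steps by invoking \cite{Belloum22,BM21}.
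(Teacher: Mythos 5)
The paper does not prove this lemma at all: it is quoted directly from \cite[Corollary 2.9]{BM21} and \cite[Corollary 2.9]{Belloum22}, with only a one-line remark that it could alternatively be derived from the moderate-deviation asymptotics of $\mathsf{M}_t$ together with the conditioned convergence \eqref{eq-decoration-process}. So your fallback option (``just quote the two corollaries'') is exactly the paper's treatment, and your sketch is an attempt at the alternative route the paper merely alludes to.

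Taken as a standalone argument, though, your sketch has two genuine gaps, both of which you flag but neither of which you resolve, and both of which can be avoided. First, conditioning on the \emph{exact} value $\mathsf{M}_t=\rho t-x+z$ requires a local limit theorem for $\mathsf{M}_t$ in the window $\rho t+\Theta(\sqrt t)$ and a version of \eqref{eq-decoration-process} conditioned on the precise location of the maximum, uniformly over that window; neither is available off the shelf, since the decoration laws in \cite{ABK12,ABK13,BH14,BBCM22} are defined by conditioning on the exceedance event $\{\mathsf{M}_t\ge\varrho t\}$. Second, ``differentiating in $w$'' the tail asymptotics $\mathsf{P}(\mathsf{M}_t\ge\rho t+w)\sim\kappa(\rho)t^{-1/2}e^{(1-\rho^2/2)t}e^{-\rho w-w^2/(2t)}$ to produce a density is not a legitimate step: tail equivalence does not yield density equivalence without separate local estimates. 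The standard way to run this argument is to condition instead on the event $\{\mathsf{M}_t\ge\rho t-x+A\}$ and use the \emph{joint} convergence, on that event, of the overshoot $\mathsf{M}_t-(\rho t-x+A)$ (asymptotically exponential with parameter $\rho$, as follows from the uniform tail asymptotics themselves) and of the gap process to $\mathfrak{D}^\rho$; writing $1-\mathsf{E}[e^{-\sum_u\varphi(x+\mathsf{X}_u(t)-\rho t)}]=\mathsf{P}(\mathsf{M}_t\ge\rho t-x+A)\,\mathsf{E}[1-e^{-H_t(\cdot)}\mid\mathsf{M}_t\ge\rho t-x+A]$ then produces the same $z$-integral $\int e^{-\rho z}\bigl(1-\mathsf{E}e^{-\langle\mathfrak{D}^\rho,\varphi(\cdot+z)\rangle}\bigr)\dif z$ without any local limit theorem or exact-value conditioning. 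With that modification (plus the two-sided uniform tail asymptotics for $w=\Theta(\sqrt t)$, which for $\rho=\sqrt2$ go back to Bramson and for $\rho>\sqrt2$ follow from a first/second moment or spine argument), your computation of the constants is correct and reproduces $\gamma_{\sqrt2}(\varphi)$ and $\gamma_\rho(\varphi)$ exactly as stated; but as written the proposal either needs these repairs or should simply rest on the citation, as the paper does.
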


\subsection{Choosing an individual   according to Gibbs measure}
For a standard BBM $\{(\mathsf{X}_{u}(t):u \in \mathsf{N}_{t}), \mathsf{P}\}$, it is well known that the   additive martingale
\begin{equation*}
  \mathsf{W}_{t}(\eta)=\sum_{u \in \mathsf{N}_{t}} e^{\eta\mathsf{X}_{u}(t) - \left(\frac{\eta^2}{2}+1\right) t}
\end{equation*}
converges almost surely and in $L^{1}$
to a non-degenerate random   variable
$\mathsf{W}_{\infty}(\eta)$ when $\eta \in (0,\sqrt{2})$.
And, when $\eta=\sqrt{2}$,
the non-negative martingale $\mathsf{W}_{t}(\eta) $ converges to zero with
probability one (see e.g. \cite{Kyprianou03}).
Conditioned  on BBM at time $t$, we randomly choose a particle $u \in \mathsf{N}_{t}$ with probability   $ \frac{ e^{\eta\mathsf{X}_{u}(t) } } { \sum_{u \in \mathsf{N}_{t}} e^{\eta\mathsf{X}_{u}(t) } } $,
which is the so called  \textit{Gibbs measure} at inverse  temperature $\eta$.
Hence the  additive martingale is the \textit{normalized partition function} of the Gibbs measure.

Firstly we state a law of large number theorem for the particle chosen according to the Gibbs measure. This result is not new.
Since we didn't find a reference,  we offer a simple proof in Section \ref{sec-Proof-spine-decomposition} for completeness.

\begin{proposition}\label{lem-functional-convergence-additive-martingale}
  Let $f$ be a bounded continuous function on $\mathbb{R}$. Suppose $\eta \in (0,\sqrt{2})$. Define
  \begin{equation*}
    \mathsf{W}_{t}^{f}(\eta):= \sum_{u \in \mathsf{N}_{t}} f\left(\frac{\mathsf{X}_{u}(t)}{t}\right)  e^{\eta\mathsf{X}_{u}(t) - \left(\frac{\eta^2}{2}+1\right) t}.
   \end{equation*}
   Then
   \begin{equation*}
      \lim_{t \to \infty}  \mathsf{W}_{t}^{f}(\eta) = f(\eta) \mathsf{W}_{\infty} (\eta) \
      \text{ in } L^{1}(\mathsf{P}).
   \end{equation*}
\end{proposition}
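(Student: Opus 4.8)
The plan is to prove this via the classical spine decomposition (change of measure) for branching Brownian motion together with a truncated second-moment (or $L^1$-truncation) argument. First I would introduce the size-biased measure $\widehat{\mathsf{P}}$ under which the spine particle, call it $\xi_s$, moves as a Brownian motion with drift $\eta s$ (i.e.\ $\xi_s - \eta s$ is a standard BM up to time scaling by $\sigma^2=1$), branches at rate $2$, and each bifurcation produces an extra ``spine'' child; off-spine subtrees evolve as independent copies of the original BBM. Under this tilted measure, $\mathsf{W}_t(\eta)^{-1}$ is a martingale, and the Radon–Nikodym derivative of $\widehat{\mathsf{P}}|_{\mathcal{F}_t}$ with respect to $\mathsf{P}|_{\mathcal{F}_t}$ is $\mathsf{W}_t(\eta)/\mathsf{W}_0(\eta) = \mathsf{W}_t(\eta)$. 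The key identity is the ``many-to-one'' / spine formula: for any nonnegative measurable functional,
\begin{equation*}
  \mathsf{E}\!\left[\mathsf{W}_t^f(\eta)\right] = \widehat{\mathsf{E}}\!\left[ f\!\left(\tfrac{\xi_t}{t}\right)\right],
\end{equation*}
and, more usefully for $L^1$ control, the standard decomposition of $\mathsf{W}_t^f(\eta)$ under $\widehat{\mathsf{P}}$ as a sum over spine bifurcation times of the contributions of the subtrees hanging off the spine.

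The main steps, in order, are: (1) By the law of large numbers for the spine, $\xi_t/t \to \eta$ $\widehat{\mathsf{P}}$-a.s., since $\xi_t = \eta t + (\text{BM at time } t)$ and $B_t/t \to 0$. Hence $f(\xi_t/t) \to f(\eta)$ $\widehat{\mathsf{P}}$-a.s., and by bounded convergence $\widehat{\mathsf{E}}[f(\xi_t/t)] \to f(\eta)$. (2) To upgrade this to $L^1(\mathsf{P})$ convergence of $\mathsf{W}_t^f(\eta)$ to $f(\eta)\mathsf{W}_\infty(\eta)$, write
\begin{equation*}
  \mathsf{W}_t^f(\eta) - f(\eta)\mathsf{W}_t(\eta) = \sum_{u\in\mathsf{N}_t}\Big(f\big(\tfrac{\mathsf{X}_u(t)}{t}\big)-f(\eta)\Big) e^{\eta\mathsf{X}_u(t)-(\frac{\eta^2}{2}+1)t},
\end{equation*}
and show this tends to $0$ in $L^1(\mathsf{P})$; since $\mathsf{W}_t(\eta)\to\mathsf{W}_\infty(\eta)$ in $L^1$ for $\eta\in(0,\sqrt2)$, the claim follows. (3) For the $L^1$ bound, pass to $\widehat{\mathsf{P}}$: $\mathsf{E}[|\mathsf{W}_t^f(\eta)-f(\eta)\mathsf{W}_t(\eta)|] = \widehat{\mathsf{E}}\big[ |\mathsf{W}_t^f(\eta)-f(\eta)\mathsf{W}_t(\eta)|/\mathsf{W}_t(\eta)\big]$, and bound the integrand by splitting particles according to whether $|\mathsf{X}_u(t)/t - \eta|$ is small or large. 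On the event that the spine satisfies $|\xi_t/t-\eta|$ small, use uniform continuity of $f$; on its complement (which has $\widehat{\mathsf{P}}$-probability $\to 0$), use $\|f\|_\infty$ together with the fact that the relative weight $e^{\eta\mathsf{X}_u(t)-\cdots}/\mathsf{W}_t(\eta)$ summed over such $u$ is, in $\widehat{\mathsf{P}}$-expectation, controlled by the probability that the spine (or an off-spine particle sampled size-biasedly) deviates — which is exponentially small in $t$ by a Chernoff/Cramér estimate for BM with drift $\eta<\sqrt2$.

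The step I expect to be the main obstacle is (3): making rigorous the claim that the total Gibbs weight carried by particles with atypical position $\mathsf{X}_u(t)/t$ far from $\eta$ is negligible in $L^1$. The subtlety is that $\mathsf{W}_t(\eta)$ in the denominator is random and can be small, so one cannot naively bound $\sum_{u:\,|\mathsf{X}_u(t)/t-\eta|>\epsilon} e^{\eta\mathsf{X}_u(t)-(\frac{\eta^2}{2}+1)t}$ by its mean and divide. The clean way around this is precisely the change of measure: under $\widehat{\mathsf{P}}$ the ratio $\mathsf{W}_t^{g}(\eta)/\mathsf{W}_t(\eta)$ for $g = \mathbf 1_{\{|\cdot-\eta|>\epsilon\}}$ has $\widehat{\mathsf{E}}$-expectation equal to $\widehat{\mathsf{P}}$-probability that a \emph{uniformly chosen} particle among the size-biased population is atypical, and a further spine decomposition (the ``one spine plus Poissonian immigration of subtrees'' picture) reduces this to showing $\widehat{\mathsf{E}}[\mathsf{W}_t^{g}(\eta)] = \mathsf{P}$-mean of that weight $= \widehat{\mathsf{E}}[g(\xi_t/t)] \to 0$, which is the Cramér bound for $\eta\in(0,\sqrt2)$ (the exponent $\frac{\eta^2}{2}+1$ strictly dominates $\eta x - \frac{x^2}{2}$ off $x=\eta$ only marginally, so one needs $\eta<\sqrt2$ strictly — consistent with the hypothesis). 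Assembling these pieces and invoking the $L^1$-convergence of $\mathsf{W}_t(\eta)$ (Kyprianou \cite{Kyprianou03}) yields the result.
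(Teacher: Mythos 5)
Your proposal is correct and takes essentially the same route as the paper: size-biased change of measure $\dif\widehat{\mathsf{P}}|_{\mathcal{F}_t}=\mathsf{W}_t(\eta)\,\dif\mathsf{P}|_{\mathcal{F}_t}$, the spine's law of large numbers $\Xi(t)/t\to\eta$, and the $L^1(\mathsf{P})$ convergence $\mathsf{W}_t(\eta)\to\mathsf{W}_\infty(\eta)$ for $\eta\in(0,\sqrt2)$. The only difference is that the obstacle you anticipate in step (3) does not arise in the paper's argument: since $\widehat{\mathsf{P}}(\xi_t=u\mid\mathcal{F}_t)$ is exactly the Gibbs weight, one has $\mathsf{W}_t^f(\eta)/\mathsf{W}_t(\eta)=\widehat{\mathsf{E}}\left[f\left(\Xi(t)/t\right)\mid\mathcal{F}_t\right]$, so conditional Jensen gives $\widehat{\mathsf{E}}\left|\mathsf{W}_t^f(\eta)/\mathsf{W}_t(\eta)-f(\eta)\right|\le\widehat{\mathsf{E}}\left|f\left(\Xi(t)/t\right)-f(\eta)\right|\to 0$ directly, with no typical/atypical splitting and no Cram\'er estimate (the hypothesis $\eta<\sqrt2$ enters only through the $L^1$ convergence of $\mathsf{W}_t(\eta)$, not through the deviation bound).
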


This law of large number holds since the Gibbs measure is supported on the particles at position around $\eta t$ for $\eta \in (0,\sqrt{2})$. One may further ask the fluctuations  of $X_{u}(t)-\eta t$.
Indeed a central limit theorem (CLT) holds (see \cite[(1.14)]{Pain18}): for $\eta \in (0,\sqrt{2})$ and
for  each bounded continuous function $f$,
\begin{equation*}
 \lim _{t \rightarrow \infty}
 \sum_{u \in \mathsf{N}_t}
  f\left(\frac{\mathsf{X}_u(t)-\eta t}{\sqrt{t}}\right) e^{\eta \mathsf{X}_u(t)-t\left(\frac{\eta^2}{2}+1\right)}=W_{\infty}(\eta) \int_{\mathbb{R}}  f(z) e^{-\frac{z^2}{2}}\frac{\dif z }{\sqrt{2 \pi}}
 \quad \mathsf{P}\text{-a.s.}
  \end{equation*}
  We refer to \cite[Corollary 4]{Biggins92} for the corresponding local limit theorem.
 In the critical case $\eta=\sqrt{2}$, the limiting distribution in the CLT  is no longer Gaussian,
but the Rayleigh distribution $\mu(\dif z)= z e^{-\frac{z^2}{2}} 1_{\{z>0\}}\dif z $.
 Madaule  \cite[Theorem 1.2]{Madaule16} showed that for every bounded continuous function $F$,
    \begin{equation}\label{eq-CLT-Gibbs-sqrt2}
   \lim_{t \to \infty}\sqrt{t} \sum_{u \in \mathsf{N}_{t}} F\left( \frac{\sqrt{2} t- \mathsf{X}_{u}(t)}{\sqrt{t}} \right) e^{-\sqrt{2}(\sqrt{2} t-\mathsf{X}_{u}(t))}
   =    \sqrt{\frac{2}{\pi}} \, \mathsf{Z}_{\infty} \langle F, \mu \rangle \ \text{ in probability,}
      \end{equation}
 where $\langle F, \mu \rangle := \int_{0}^{\infty} F(z) \mu(\dif z)$.
In fact, Madaule's result is a Donsker-type theorem for BRWs. A simple proof  of \eqref{eq-CLT-Gibbs-sqrt2}  can be found in \cite[Theorem B.1]{MZ16}.

The following proposition  gives  a natural generalization of   \eqref{eq-CLT-Gibbs-sqrt2}.
We didn't find such a result in the literature and to our knowledge, it is new.

\begin{proposition}\label{lem-functional-convergence-derivative-martingale}
 Let $G$ be a non-negative bounded measurable function with compact support.
Suppose $F_{t}(z)= G(\frac{z- r_{t}}{h_{t}})$,
where $r_{t}$ and $h_{t}$ satisfy that for large $t$, $ \log^{3}(t)/\sqrt{t} \ll r_{t}  \leq \bar{r}<\infty$, $r_{t}+yh_{t}=\Theta(r_{t})$  uniformly for $y \in \mathrm{supp}(G)$ and $h_{t}$ decreases at most polynomially fast.
Define
 \begin{equation*}
  \mathsf{W}^{F_t}_{t}(\sqrt{2})  := \sum_{u \in \mathsf{N}_{t}}  F_{t} \left( \frac{\sqrt{2}t- \mathsf{X}_{u}(t) }{\sqrt{t}} \right)e^{-\sqrt{2}(\sqrt{2} t-\mathsf{X}_{u}(t))}   .
 \end{equation*}
Then we  have
 \begin{equation*}
 \lim_{t \to \infty}\frac{ \sqrt{ t } }{ \langle F_{t},\mu \rangle } \, \mathsf{W}^{F_t}_{t}(\sqrt{2}) = \sqrt{\frac{2}{\pi}} \mathsf{Z}_{\infty}
 \text{ in probability.}
  \end{equation*}
 Taking $r_t=\lambda$ and $h_t=t^{-1/4}$, and denoting $F_{t,\lambda} (z):=G((z-\lambda) t^{1/4})$,
for any finite
interval $I \subset (0,\infty)$ with strictly positive endpoints
we have
 \begin{equation}\label{conv-prop2}
  \lim\limits_{t \to \infty}   t^{3/4}
  \int_{I} \mathsf{W}^{F_{t,\lambda}}_{t-\lambda \sqrt{t}}(\sqrt{2})  \dif \lambda
  = \sqrt{\frac{2}{\pi}} \, \mu(I) \int_{\mathbb{R}}  G(y) \dif y \,  \mathsf{Z}_{\infty}
 \text{ in probability.}
 \end{equation}
  \end{proposition}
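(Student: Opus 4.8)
Throughout write $D_u(t):=\sqrt2\,t-\mathsf X_u(t)$ and recall $\mu(\dif z)=z\,e^{-z^2/2}\mathbf 1_{\{z>0\}}\dif z$. The statement is a local refinement of Madaule's functional convergence \eqref{eq-CLT-Gibbs-sqrt2}, in which the fixed bounded continuous test function is replaced by the ``moving bump'' $F_t$ (possibly narrow, and centred near a point that may drift towards the edge $z=0$), and it is moreover stated without any continuity assumption on $G$. The plan is to prove it by the classical barrier truncation together with first- and second-moment (many-to-one and many-to-two) estimates underlying the Seneta--Heyde norming for branching Brownian motion, carrying the dependence on $F_t$ through the computation. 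Note first that the support of $G$ together with $r_t+yh_t=\Theta(r_t)$ forces every particle contributing to $\mathsf W^{F_t}_t(\sqrt2)$ to satisfy $D_u(t)\in[c_1 r_t\sqrt t,\,c_2 r_t\sqrt t]\subset[c_1 t^{\epsilon},\,c_2\bar r\sqrt t]$ for fixed $0<c_1<c_2$; this is the window in which all estimates are localised, and it explains why the hypotheses on $(r_t,h_t)$ are imposed.

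The plain first moment is insufficient: by the many-to-one lemma one gets the exact identity $\mathsf E[\mathsf W^{F_t}_t(\sqrt2)]=\tfrac1{\sqrt{2\pi}}\int_{\mathbb R}F_t(z)e^{-z^2/2}\dif z$, which exceeds $\langle F_t,\mu\rangle/\sqrt t$ by the factor $\asymp\sqrt t/r_t\to\infty$ (as $r_t\le\bar r$), because it is dominated by particles following an atypical path. I would therefore fix a large constant $a$, let $\mathsf W^{(a),F_t}_t(\sqrt2)$ be the same sum restricted to particles whose ancestral path stays below $s\mapsto\sqrt2\,s+a$, and use \eqref{eq-upper-envelope-0} (with $\beta=\sigma^2=1$) to show the excluded particles contribute $o_{\mathbb P}(\langle F_t,\mu\rangle/\sqrt t)$, so it suffices to treat $\mathsf W^{(a),F_t}_t(\sqrt2)$ and then send $a\to\infty$. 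Conditioning on $\mathcal F_s$ for a large fixed reference time $s$, decomposing $D_u(t)$ over the ancestor at time $s$, and applying many-to-one to each sub-tree, the probability that the continuing Brownian bridge ending at level $\sqrt2\,t-z\sqrt t$ stays below the barrier contributes a factor $\sim\mathrm{const}\cdot z$ by Lemma \ref{lem-bridge-estimate-0}; this is exactly what turns the Gaussian integral $\int F_t(z)e^{-z^2/2}\dif z$ into $\int F_t(z)\,z\,e^{-z^2/2}\dif z=\langle F_t,\mu\rangle$ and gives $\mathsf E[\mathsf W^{(a),F_t}_t(\sqrt2)\mid\mathcal F_s]\sim\tfrac2{\sqrt{2\pi}}\bigl(a\,\mathsf W^{(a)}_s(\sqrt2)+\mathsf Z^{(a)}_s\bigr)\langle F_t,\mu\rangle/\sqrt t$. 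Since $\mathsf W^{(a)}_s(\sqrt2)\to0$ and $\mathsf Z^{(a)}_s\to\mathsf Z^{(a)}_\infty\uparrow\mathsf Z_\infty$ as $s\to\infty$ then $a\to\infty$, and $\tfrac2{\sqrt{2\pi}}=\sqrt{2/\pi}$, the conditional mean of $\sqrt t\,\mathsf W^{(a),F_t}_t(\sqrt2)/\langle F_t,\mu\rangle$ converges to $\sqrt{2/\pi}\,\mathsf Z_\infty$ in the iterated-limit sense.

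The crux, and the step I expect to cost the most work, is the second moment: one must show $\mathsf E\bigl[(\mathsf W^{(a),F_t}_t(\sqrt2))^2\mid\mathcal F_s\bigr]\le(1+o(1))\bigl(\mathsf E[\mathsf W^{(a),F_t}_t(\sqrt2)\mid\mathcal F_s]\bigr)^2$ up to a term vanishing after $a,s\to\infty$, which by Chebyshev yields the concentration and hence the convergence in probability. By the many-to-two lemma the second moment splits according to the time $r\in[s,t]$ at which the two distinguished particles branch apart: the part with $r$ close to $t$ is controlled by the weight $e^{-\sqrt2 D}$ and the barrier, while the bulk $r\in[s,t-O(1)]$ must be bounded by the square of the first moment. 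Here the hypotheses enter essentially: $r_t\ge t^{-1/2+\epsilon}$ keeps the shared displacement of the two particles from being forced into an atypically deep region, and $r_t+yh_t=\Theta(r_t)$ guarantees that shrinking the window (small $h_t$) rescales the first and second moments by the same power of $h_t$, so that narrowing does not inflate the variance. Obtaining this bound uniformly over admissible $F_t$ is the main obstacle; the remaining bookkeeping parallels the proof of \eqref{eq-CLT-Gibbs-sqrt2} in \cite{MZ16,Madaule16}, and being purely first/second moment it accommodates measurable $G$ without a continuity assumption.

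For the integrated statement \eqref{conv-prop2}, with $r_t=\lambda$ and $h_t=t^{-1/4}$ one has $\langle F_{t,\lambda},\mu\rangle=t^{-1/4}\int_{\mathbb R}G(y)(\lambda+yt^{-1/4})e^{-(\lambda+yt^{-1/4})^2/2}\dif y\sim t^{-1/4}\lambda e^{-\lambda^2/2}\int_{\mathbb R}G$, uniformly for $\lambda\in\overline I$, so the main conclusion gives $t^{3/4}\mathsf W^{F_{t,\lambda}}_t(\sqrt2)\to\sqrt{2/\pi}\,\lambda e^{-\lambda^2/2}\bigl(\int_{\mathbb R}G\bigr)\mathsf Z_\infty$ in probability for each $\lambda$, and integrating over $\lambda\in I$ and exchanging the limit with the integral (justified by a uniform-in-$\lambda$ first-moment bound and dominated convergence) produces $\sqrt{2/\pi}\bigl(\int_{\mathbb R}G\bigr)\mathsf Z_\infty\int_I\lambda e^{-\lambda^2/2}\dif\lambda=\sqrt{2/\pi}\,\mu(I)\bigl(\int_{\mathbb R}G\bigr)\mathsf Z_\infty$. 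Alternatively, avoiding the general moving-window result for this corollary, one may exchange the $u$-sum and the $\lambda$-integral first: $\int_I G\bigl((D_u(t)/\sqrt t-\lambda)t^{1/4}\bigr)\dif\lambda$ equals $t^{-1/4}\int_{\mathbb R}G$ once $D_u(t)/\sqrt t$ lies in the $t^{-1/4}$-interior of $I$ and vanishes for $D_u(t)/\sqrt t\notin I$, so the quantity equals $\bigl(\int_{\mathbb R}G\bigr)\sqrt t\,\mathsf W^{\mathbf 1_{I}}_t(\sqrt2)$ up to a boundary term supported on $\{D_u(t)/\sqrt t$ within $O(t^{-1/4})$ of $\partial I\}$; the first piece converges by \eqref{eq-CLT-Gibbs-sqrt2} (sandwiching $\mathbf 1_I$ between continuous functions, using $\mu(\partial I)=0$), and the boundary term is bounded, again via \eqref{eq-CLT-Gibbs-sqrt2} applied to a fixed $\delta$-neighbourhood of $\partial I$, by a quantity of order $\mathsf Z_\infty\,\mu(\partial I^{(\delta)})\to0$ as $\delta\to0$.
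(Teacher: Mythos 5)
There is a genuine gap at what you yourself identify as the crux: the claimed truncated second-moment bound $\mathsf E\bigl[(\mathsf W^{(a),F_t}_t)^2\mid\mathcal F_s\bigr]\le(1+o(1))\bigl(\mathsf E[\mathsf W^{(a),F_t}_t\mid\mathcal F_s]\bigr)^2$ is false, and no choice of the fixed barrier height $a$ and fixed reference time $s$ can repair it. A many-to-two computation (with the upper barrier included, via the ballot factor of Lemma \ref{lem-bridge-estimate-0}) shows that the dominant off-diagonal contribution does \emph{not} come from pairs branching near time $t$ — those are indeed killed, since the window sits at depth $\Theta(r_t\sqrt t)\ge t^{\epsilon}$ and the squared weight $e^{-2\sqrt2 D}$ beats the single density factor $e^{\sqrt2 D}$ — but from pairs whose most recent common ancestor lives at a \emph{macroscopic} time $t-\alpha t$ within $O(1)$ of the upper barrier, after which both descendants dive to depth $\approx r_t\sqrt t$ into the window. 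Such a pair pays the ballot/repulsion factor only once, whereas the square of the first moment pays it twice; quantitatively, the barrier-truncated conditional second moment exceeds the squared conditional mean by a factor of order $\sqrt t/r_t$ (up to constants), uniformly in $h_t$, so Chebyshev gives nothing for any fixed $s,a$, and sending $s,a\to\infty$ after $t\to\infty$ cannot help since the divergence is in $t$. In particular your assertion that the hypothesis $r_t+yh_t=\Theta(r_t)$ ``guarantees that narrowing does not inflate the variance'' is incorrect, and your diagnosis of which branching times are dangerous is inverted. This is precisely why the paper does not run a plain (conditional) second-moment argument: it changes measure by the truncated derivative martingale $\mathsf D^{(K)}_t$, studies the self-normalized ratio $\mathsf W^{(K),F}_t/\mathsf D^{(K)}_t$ under $\mathsf Q^{(K)}$ via the Bessel spine, and — because even for this ratio the second moment is only $O((r_t^2h_t)^{-1})$ in \eqref{eq-p-moment}, which diverges exactly in the needed case $h_t=\Theta(t^{-1/4})$ — it works with a $p$-th moment for $p$ slightly above $1$, restricted to the good event \eqref{def-G} built from $t$-dependent barriers ($R_K(s)\ge\log^3t$ after $k_t=t^{1-\epsilon}$ and $\mathsf D^{(K),[k_t,t]}_t\le e^{-\log^2t}$), and closes the estimate by the bootstrap controlling $\Delta_p(t)$ in \eqref{eq-def-Delta-t}. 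None of this machinery is optional for the first assertion of the proposition, and your outline contains no substitute for it.

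Two smaller points. Your first-moment step (barrier truncation, conditioning at time $s$, ballot factor converting $e^{-z^2/2}\dif z$ into $\mu(\dif z)$, limit $\sqrt{2/\pi}\,\mathsf Z_\infty$) is sound, and your alternative derivation of the integrated statement \eqref{conv-prop2} by exchanging the $u$-sum with the $\lambda$-integral is correct and genuinely simpler: after Fubini the narrow window averages out and \eqref{conv-prop2} follows softly from Madaule's theorem \eqref{eq-CLT-Gibbs-sqrt2} plus a boundary estimate. But this shortcut only yields the $\lambda$-integrated corollary; it does not prove the main convergence $\sqrt t\,\mathsf W^{F_t}_t(\sqrt2)/\langle F_t,\mu\rangle\to\sqrt{2/\pi}\,\mathsf Z_\infty$ for a fixed narrow window, which is the statement the paper actually needs pointwise in $\lambda$ (in the proof of Theorem \ref{thm-extremal-process-boundary-C1C3} both the reference time $t-\lambda\sqrt t$ and the window move with $\lambda$, so the Fubini trick does not apply there), and it is exactly this fixed-$\lambda$, $h_t=t^{-1/4}$ case where your proposed second-moment route breaks down.
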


\begin{remark}
  To make  Proposition \ref{lem-functional-convergence-derivative-martingale} easier to understand, we choose the function  $G$   to be the indicator function $1_{[a,b]}$.
Letting $ h_{t}=1$ and $r_{t}=\lambda>0$,
Proposition \ref{lem-functional-convergence-derivative-martingale} gives the  CLT  \eqref{eq-CLT-Gibbs-sqrt2}:
\begin{equation*}
  \sqrt{t} \sum_{u \in \mathsf{N}_{t}} e^{-\sqrt{2}(\sqrt{2} t-\mathsf{X}_{u}(t))} 1_{\{ \sqrt{2}t-X_{u}(t)
   \in [(\lambda+a)\sqrt{t}, (\lambda+b)\sqrt{t}]\} }= [1+o_{\mathsf{P}}(1)] \mu([\lambda+a, \lambda+b]) \sqrt{\frac{2}{\pi}} \mathsf{Z}_{\infty}.
  \end{equation*}
 Letting $h_{t}=t^{-1/2}$ and $r_{t}=\lambda>0$,
Proposition \ref{lem-functional-convergence-derivative-martingale} yields that
  \begin{equation*}
    \sqrt{t} \sum_{u \in \mathsf{N}_{t}} e^{-\sqrt{2}(\sqrt{2} t-\mathsf{X}_{u}(t))}1_{\{ \sqrt{2}t-X_{u}(t) \in   [ \lambda\sqrt{t} +a, \lambda\sqrt{t}+b]\}}
 = [1+o_{\mathsf{P}}(1)] \frac{(b-a)}{\sqrt{t}} \lambda e^{-\frac{\lambda^2}{2}}  \sqrt{\frac{2}{\pi}}\, \mathsf{Z}_{\infty}.
  \end{equation*}
This  can be thought of a \textit{local limit theorem} (LLT)  result  for the position of a particle
sampled according to the Gibbs measure with parameter $\eta=\sqrt{2}$.
(See also  \cite[Theorem 4]{Biggins92}
for a LLT result in the case $\eta<\sqrt{2}$.)
Letting $h_{t}=t^{-1/4}$ and $r_{t} =\lambda>0$, we get
\begin{equation*}
  \sqrt{t} \sum_{u \in \mathsf{N}_{t}} e^{-\sqrt{2}(\sqrt{2} t-\mathsf{X}_{u}(t))} 1_{\{ \sqrt{2}t-X_{u}(t) \in   [ \lambda\sqrt{t} +at^{1/4}, \lambda\sqrt{t}+bt^{1/4}]\}}
 =[1+o_{\mathsf{P}}(1)]  \frac{(b-a)}{t^{1/4}} \lambda e^{-\frac{\lambda^2}{2}}  \sqrt{\frac{2}{\pi}}\, \mathsf{Z}_{\infty},
      \end{equation*}
 which  can be regarded as   a result  between CLT and LLT.
\end{remark}

  \begin{remark}
    Formally,  applying  Proposition \ref{lem-functional-convergence-derivative-martingale}
to the function  $zF_t(z)$ and letting $\hat{\mu}(\dif z )= \sqrt{\frac{2}{\pi}}z^2 e^{-z^2/2} 1_{\{z>0\}}\dif z $, we have
    \begin{equation*}
      \frac{1}{\langle
     F_{t},
      \hat{\mu} \rangle} \sum_{u \in \mathsf{N}_{t}}  F_{t} \left( \frac{\sqrt{2}t- \mathsf{X}_{u}(t) }{\sqrt{t}} \right)  (\sqrt{2}t- X_{u}(t)) e^{-\sqrt{2}(\sqrt{2} t-\mathsf{X}_{u}(t))} \to  \mathsf{Z}_{\infty} \quad \text{in probability},
     \end{equation*}
  A rigorous proof can be achieved by slightly modifying the proof of Proposition \ref{lem-functional-convergence-derivative-martingale}.
\end{remark}

  \begin{remark}\label{Rem2.9}
In the proof of Proposition \ref{lem-functional-convergence-derivative-martingale}, we use the powerful method developed in \cite{AS14}. The main step is to prove that a nonnegative random variable
$U_{t}:= U_{t}^{(K)}$ (see    \eqref{def-U^K_t} for the definition of $U_{t}^{(K)}$)
converges in $L^{1}$ under some probability
$\mathsf{Q}: = \mathsf{Q}^{(K)}$ (see \eqref{eq-DK-change-of-measure} for the definition of $\mathsf{Q}^{(K)}$).
In the CLT case (i.e., $r_{t}=\Theta(1)=h_{t}$),  the $2$nd moment of  $U_{t}$ is finite and  one can find good events $G_{t}$ so that
$\limsup\limits_{t\to\infty} \frac{\mathsf E_{\mathsf{Q}}[U_{t}^{2}1_{G_t}]}{\left(\mathsf{E}_{\mathsf{Q}}[(U_{t})]\right)^2}\le 1$  and $\mathsf E_{\mathsf{Q}}[U_{t}^{2}1_{G^c_t}]$ is negligible.
However,  when  $h_{t}=o(1)$ or $r_{t}=o(1)$, the $2$nd moment of $U_{t}$ appears to explode, and
$\mathsf E_{\mathsf{Q}}[U_{t}^{2}I_{G^c_t}]$ is not negligible.

In the first version of this paper \cite{MR23},  we overcome this  difficulty by using a bootstrap argument to show that the integral of $ U_{t}^{p}$ on $G_t^{c}$ is negligible and
$\limsup\limits_{t\to\infty}\frac{\mathsf E_{\mathsf{Q}}[U_{t}^{p}I_{G_t}]}{\left(\mathsf{E}_{\mathsf{Q}}[U_{t}]\right)^p}\leq 1$
 for $p=1+\epsilon$ with small $\epsilon$. But we need to further assume that   $\sqrt{t} r_{t}\gg t^{\epsilon}$.
We thank one of our referees for suggesting the current proof which is simpler than our first version. This type of argument is used in \cite[Section 5]{Pain18} also based on the ideas in \cite{AS14}.
Indeed we do not have to prove the stronger result on the  convergence of $p$-th moment.
In order to show the $L^1$ convergence of $U_t$, it suffices to show that
 the expectation of $U_{t}$ on $G_{t}^{c}$ is negligible and
  the expectation of $(U_{t})^{2}$ on $G_{t}$ is less than the square of the expectation of $U_{t}$.
 Also, the conditions on $r_{t}$ in \cite{MR23} can be
  weakened to
   $\sqrt{t} r_t \gg \log ^3 t $ and even weakened to $\sqrt{t} r_{t} \geq \log^{1+\epsilon}t$,
  by properly choosing  the barrier in the good event $G_{t}$.
However, as pointed by the same referee,
dealing with $r_t=O((\log t) / \sqrt{t})$ requires to work with a more precise barrier. In this regime what matters is not the distance between $X_u(t)$ and $\sqrt{2} t$, but rather the distance between $X_u(t)$ and $m_{t}$. This type of regime is covered in \cite{Pain18}. Moreover, results in \cite[Corollary 1.3]{Pain18} is somewhat similar to our proposition:  the scaling in the test function $F$ is always $1 / \sqrt{t}$, but the fact of having a weight $e^{\beta_t X_u(t)}$ instead of $e^{\sqrt{2} X_u(t)}$ implies that the considered sum is supported in some smaller windows.
  \end{remark}

\subsection{Many-to-one lemmas for two-type BBMs}

Let $\{(X_u(t), u \in N_{t}),\mathbb{P}\}$ be a  two-type reducible BBM. Recall that $\alpha $ is the rate at which a type $1$ individual
gives
birth to one type $2$ children.
Recall that for a type $2$ particle $u$,   $T_u$  is  the time at which the oldest ancestor of type $2$ of $u$ was born.
Let $b_{u}$ be the time when  $u$ was born.
We write
\begin{equation*}
\mathcal{B}=\left\{ u \in \cup_{t \geq 0} \, N_t^2,\quad T_{u}=b_u \right\}
\end{equation*}
for the set of particles of type $2$ that are born from a particle of type $1$.
The following  useful  many-to-one lemmas
were proved in  \cite[Proposition 4.1 and Corollary 4.3]{BM21}.

\begin{lemma}\label{many-to-one-1}
 Let $f$ be a non-negative  measurable function. Then
 \begin{enumerate}[(i)]
  \item  $\displaystyle
 \mathbb{E} \bigg(\sum_{u \in N_t^2} f\left(\left(X_u(r), r \leq t\right), T_{u}\right)\bigg)  =\alpha \int_0^t e^{\beta s+(t-s)}
\mathbf{E}
 \bigg(f\left(\left(\sigma B_{r \wedge s}+\left(B_r-B_{r \wedge s}\right), r \leq t\right), s\right)\bigg) \dif s$.
\item  $\displaystyle
  \mathbb{E}\left(\sum_{u \in \mathcal{B}} f\left(X_u(s), s \leq T_{u}\right)\right) =\alpha \int_0^{\infty} e^{\beta t}
  \mathbf{E}\left(f\left( \sigma B_s, s \leq t\right)\right) \dif t$.
\item  $ \displaystyle \mathbb{E}\bigg[\exp \bigg(-\sum_{u \in \mathcal{B}} f\left(X_u(s), s \leq T_{u}\right)\bigg)\bigg] =\mathbb{E}\bigg[\exp \bigg(-\alpha \int_0^{\infty} \sum_{u \in N_t^1} \left( 1-e^{-f\left(X_u(s), s \leq t\right)} \right) \dif t\bigg)\bigg] $.
 \end{enumerate}
\end{lemma}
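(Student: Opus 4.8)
The plan is to exploit the layered structure of the two-type process. A type $2$ particle $u\in N_t^2$ is produced by first following a trajectory of the type $1$ BBM $(\mathsf{X}^{\beta,\sigma^2})$ up to its transformation time $T_u=b_u$, where a type $2$ particle is emitted, and then following the standard BBM generated by that particle over the remaining time interval of length $t-T_u$. The two elementary inputs I will use repeatedly are: the classical many-to-one lemma for a \emph{single}-type BBM (for branching rate $r$ and diffusion coefficient $c^2$ one has $\mathbb{E}\sum_{u\in\mathsf N_\tau}g((\mathsf X_u(s))_{s\le\tau})=e^{r\tau}\,\mathbf E\,g((cB_s)_{s\le\tau})$), and the identity $\mathbb{E}|N_s^1|=e^{\beta s}$. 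I would prove part (iii) first and then deduce (ii) and (i) from it.

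\emph{Part (iii).} Condition on the entire type $1$ BBM, that is, on the family of trajectories $(X_u(s):u\in N_s^1)_{s\ge0}$. Along each type $1$ lineage, type $2$ particles are emitted according to an independent Poisson clock of rate $\alpha$; consequently, conditionally on the type $1$ process, the collection of stopped trajectories $\{(X_u(s),s\le T_u):u\in\mathcal B\}$ is a Cox point process on path space directed by the random intensity measure $\alpha\int_0^\infty\sum_{u\in N_t^1}\delta_{(X_u(s),\,s\le t)}\,\dif t$; here the factor $\sum_{u\in N_t^1}$, integrated in $t$, accounts precisely for the total time length swept out by type $1$ lineages. Applying the Poisson exponential (Laplace-functional) formula conditionally and then taking the expectation over the type $1$ BBM yields (iii).

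\emph{Parts (ii) and (i).} Part (ii) follows from (iii) either by replacing $f$ by $\lambda f$ and extracting the first-order term in $\lambda$, or directly from Campbell's formula applied to the same Cox process; in either case the many-to-one lemma for the type $1$ BBM --- whose spine is a Brownian motion with diffusion coefficient $\sigma^2$ --- converts $\mathbb{E}\sum_{u\in N_t^1}f((X_u(s),s\le t))$ into $e^{\beta t}$ times an expectation over a single (diffusion-$\sigma^2$) Brownian trajectory, giving the stated integral. For (i), I would use the branching property at the transformation times: given $\mathcal B$ and the stopped trajectories, the standard BBMs issued by the particles of $\mathcal B$ are independent, so the sum over $u\in N_t^2$ splits as a sum over $w\in\mathcal B$ with $b_w\le t$ of the contribution of the standard BBM of length $t-b_w$ started from $X_w(b_w)$; the many-to-one lemma for the standard BBM then produces the factor $e^{t-b_w}$ and replaces the descendant trajectory by an independent Brownian path $(B'_r)_{r\le t-b_w}$, and applying part (ii) to the remaining sum over $\mathcal B$ (with $s=b_w$) and recognising the concatenated path as $(\sigma B_{u\wedge s}+(B_u-B_{u\wedge s}))_{u\le t}$ in law gives the identity.

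The routine matters --- measurability of the functionals, a monotone-class reduction to well-behaved $f$, and the justification of Fubini/Campbell interchanges --- I will handle in the standard way. The single step that requires genuine care is the Cox-process claim underlying (iii): one must verify that the emission times of type $2$ particles along the type $1$ tree, recorded together with the ancestral trajectory up to the emission time, do form a \emph{conditionally} Poisson process with exactly the stated path-space intensity. This is where the precise branching rule enters (a type $1$ particle retains its type and continues after emitting a type $2$ child, so emissions accumulate along the whole type $1$ genealogy), and it is the main obstacle to making the argument fully rigorous.
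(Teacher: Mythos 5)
Your argument is correct and coincides with the proof this paper relies on: the lemma is not proved here but cited from \cite{BM21} (Proposition 4.1 and Corollary 4.3), where exactly your route is taken --- conditionally on the type $1$ BBM, the stopped paths of $\mathcal{B}$ form a (Cox) Poisson point process on path space with intensity $\alpha\int_0^\infty\sum_{u\in N_t^1}\delta_{(X_u(s),\,s\le t)}\,\dif t$, so (iii) follows from the Poisson Laplace functional, (ii) from Campbell's formula together with the single-type many-to-one lemma, and (i) from the branching property at the transformation times followed by (ii). The only point to note is that your (correct) derivation of (ii) yields an expectation over a diffusion-$\sigma^2$ Brownian path, which is indeed how the lemma is applied later in the paper, i.e.\ the $B_s$ displayed in item (ii) is to be read as $\sigma B_s$.
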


\section{Boundary between $\mathscr{C}_{II}$ and $\mathscr{C}_{III}$}

In this section we always assume that
$(\beta,\sigma^2) \in \mathscr{B}_{II,III}$, i.e.,
$\beta> 1$, $\sigma^2 = 2- \beta < 1$  and recall that $m^{2,3}_{t}:= \sqrt{2} t - \frac{1}{2 \sqrt{2}} \log t$.

\begin{lemma}\label{lem-born-after-t1/2}
 For any $A>0$, we have
  \begin{equation*}
  \lim _{R \rightarrow \infty} \limsup _{t \rightarrow \infty} \mathbb{P}\left(\exists u \in N_{t}^{2}: T_{u} \notin [ \frac{1}{R} \sqrt{t}  ,R \sqrt{t}] , X_{u}(t) \geq m^{2,3}_{t}  -A \right)=0 .
  \end{equation*}
  \end{lemma}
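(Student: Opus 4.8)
The plan is to bound the expected number of type $2$ particles $u$ with $T_u \notin [\tfrac1R\sqrt t, R\sqrt t]$ and $X_u(t) \ge m^{2,3}_t - A$, and show this expectation tends to $0$ after sending $t\to\infty$ and then $R\to\infty$; the result then follows from Markov's inequality. We split the bad event into two regimes: (a) the \emph{early} regime $T_u \le \tfrac1R\sqrt t$ (which includes $T_u = O(1)$), and (b) the \emph{late} regime $T_u \ge R\sqrt t$. In both cases we use the many-to-one formula Lemma~\ref{many-to-one-1}(i): the expected count is
\[
\alpha \int_0^t e^{\beta s + (t-s)}\, \mathbf{P}\Big(\sigma B_{s} + (B_t - B_s) \ge m^{2,3}_t - A\Big)\, \dif s,
\]
restricted to the relevant range of $s$. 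Here $\sigma B_s$ is the displacement accrued while the ancestor is type $1$ (diffusion $\sigma^2$) up to time $s = T_u$, and $B_t - B_s$ is a standard Brownian increment over $[s,t]$; the endpoint is Gaussian with variance $\sigma^2 s + (t-s) = t - (1-\sigma^2)s = t-(\beta-1)s$ (using $\sigma^2 = 2-\beta$).

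For the \emph{late} regime, write $s = T_u$ with $s \in [R\sqrt t, t]$. The Gaussian tail gives
\[
\mathbf{P}\big(\sigma B_s + (B_t-B_s) \ge m^{2,3}_t - A\big) \lesssim \exp\!\Big(-\frac{(m^{2,3}_t - A)^2}{2(t-(\beta-1)s)}\Big),
\]
and combined with the exponential growth factor $e^{\beta s + (t-s)} = e^{t + (\beta-1)s}$ the integrand is, up to polynomial corrections, $\exp\{t + (\beta-1)s - \tfrac{(\sqrt2 t)^2}{2(t-(\beta-1)s)} + \cdots\}$. Setting $u = (\beta-1)s/t \in [0,1]$ one checks the exponent is $t\,\phi(u) + o(t)$ with $\phi(u) = 1 + u - \tfrac{1}{1-u}$, and $\phi(u) \le 0$ on $[0,1)$ with equality only at $u=0$; moreover $\phi(u) \le -c\,u^2$ near $0$. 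Since $s \ge R\sqrt t$ means $u \ge (\beta-1)R/\sqrt t$, the exponent is $\le -c R^2$ (the $\sqrt t$ scaling is exactly matched: $t \cdot u^2 \asymp t \cdot R^2/t = R^2$), and integrating over $s$ produces a bound $\lesssim \mathrm{poly}(t)\, e^{-cR^2}$ which one must check carefully still vanishes — in fact the polynomial factors should be controlled so that the $t\to\infty$ limit is finite and then $R\to\infty$ kills it. (One can be more careful by also invoking the upper-envelope estimate \eqref{eq-upper-envelope-0} for the type $1$ part to avoid overcounting, but the raw first-moment bound should already suffice here.)

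For the \emph{early} regime $T_u = s \le \tfrac1R\sqrt t$, the ancestral type-$1$ part contributes negligibly to both position and growth (the factor $e^{\beta s} = e^{o(\sqrt t)}$ and $\sigma B_s$ is typically $O(t^{1/4})$), so heuristically the particle behaves like a standard BBM of length $\approx t$ that must reach $\sqrt2 t - \tfrac1{2\sqrt2}\log t - A$. But a standard BBM has maximum $\sqrt2 t - \tfrac{3}{2\sqrt2}\log t + O_{\mathbb P}(1)$, which is \emph{lower} by $\tfrac{1}{\sqrt2}\log t$; so such particles should be rare. To make this rigorous I would again use many-to-one: the expected count over $s \in [0,\tfrac1R\sqrt t]$ is $\alpha\int_0^{\sqrt t/R} e^{\beta s+(t-s)}\mathbf{P}(\cdot)\dif s$, and the Gaussian endpoint probability at level $m^{2,3}_t - A$ with variance $t - (\beta-1)s = t(1+o(1))$ is $\asymp t^{-1/2}\exp(-\tfrac{(m^{2,3}_t-A)^2}{2(t-(\beta-1)s)})$. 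Expanding, $\tfrac{(m^{2,3}_t)^2}{2t} = t - \tfrac12\log t + o(\log t)$, so $e^{t}\cdot t^{-1/2} e^{-(m^{2,3}_t)^2/2t} \asymp t^{-1/2} \cdot t^{1/2} = \Theta(1)$ per unit $s$ — i.e. the integrand is $\Theta(1)$ and integrating over $s\in[0,\sqrt t/R]$ gives $\Theta(\sqrt t/R)$, which does \textbf{not} vanish. This tells me the crude first moment is too lossy and one must instead impose the \emph{barrier}: extremal particles stay below the curve $vs - a_t + a_{t-s} + K$ (via \eqref{eq-upper-envelope}), equivalently use the Bramson-type bound Lemma~\ref{lem-bridge-estimate} for the late part of the trajectory $[s,t]$, which supplies the missing factor $\asymp \tfrac{t-s+\log t}{t}$ — but here $t-s \asymp t$, so that doesn't help either; rather the correct extra gain comes from requiring the path to stay below the barrier over the \emph{whole} interval, forcing the Brownian bridge on $[0,t]$ to stay below a line with slope $\sqrt2$, which costs a factor $\asymp \tfrac{1}{t}$ (Lemma~\ref{lem-bridge-estimate-0}) — enough to beat the $\Theta(\sqrt t/R)$ and in fact give $\Theta(1/(R\sqrt t)) \to 0$.

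The main obstacle, then, is the early regime: a naive first-moment computation fails by a factor of $\sqrt t$, and one genuinely needs to run the Bramson barrier argument — decompose the trajectory of an extremal particle, use \eqref{eq-upper-envelope} / Lemma~\ref{lem-bridge-estimate-0} to extract the logarithmic correction $\tfrac{3}{2\sqrt2}\log t$ that a length-$t$ BBM path must pay, and observe that the target level $m^{2,3}_t - A$ sits $\tfrac{1}{\sqrt2}\log t$ above that, yielding an $e^{-\sqrt2 \cdot \tfrac1{\sqrt2}\log t} = t^{-1}$ type gain that, together with the length of the $s$-integral, is summable to something $\to 0$ as $R\to\infty$. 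The late regime, by contrast, is a pure large-deviations / Laplace-method estimate and should go through cleanly. I would organize the proof as two lemmas (early and late), each an application of Lemma~\ref{many-to-one-1}(i) followed by the appropriate Brownian estimate, and conclude by a union bound.
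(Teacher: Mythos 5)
Your overall architecture (many-to-one first moment, early/late split in $T_u$, and the recognition that in the early regime the naive Gaussian bound loses a factor of order $\sqrt{t}$ which must be recovered through a Bramson-type barrier / sharp BBM-maximum tail) matches the mechanism of the paper's proof: there, one restricts to ancestors satisfying the type-$1$ envelope $|X_u(T_u)|\le vT_u+K$ (cost controlled by \eqref{eq-upper-envelope-0}) and bounds $\mathsf P(x+\mathsf M_{t-s}\ge m^{2,3}_t-A)$ by Lemma \ref{lem-Max-BBM-tail}; on the boundary $\beta+\sigma^2=2$ the exponentials cancel exactly and one is left with the factor $\frac{s+\log t+|z|}{t}\,e^{-(\sqrt2-v)^2s^2/(2t)}$, whose integral in $s=\lambda\sqrt t$ over $\lambda\in[0,\tfrac1R)\cup(R,\log t]$ vanishes as $R\to\infty$.

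The genuine gap is your treatment of the late regime. You claim that for $T_u\ge R\sqrt t$ ``the raw first-moment bound should already suffice,'' but it does not on the window $R\sqrt t\le T_u\le \sqrt t\,\log t$. Carrying out your own computation there: the growth factor is $e^{t+(\beta-1)s}$, the Gaussian tail prefactor is $\Theta(t^{-1/2})$, and expanding $\frac{(m^{2,3}_t-A)^2}{2(t-(\beta-1)s)}$ produces $t+(\beta-1)s-\tfrac12\log t+\frac{(\beta-1)^2s^2}{t}(1+o(1))+O(1)$ for $s\le\sqrt t\log t$. The $+\tfrac12\log t$ gain in the exponent exactly cancels the $t^{-1/2}$ density prefactor, so the integrand is $\Theta\bigl(e^{-c s^2/t}\bigr)$ per unit $s$, and $\int_{R\sqrt t}^{\sqrt t\log t}e^{-cs^2/t}\,\dif s=\Theta\bigl(\sqrt t\,e^{-cR^2}\bigr)$, which diverges as $t\to\infty$ for fixed $R$. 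In other words, the crude Laplace estimate only wins once $s\ge\sqrt t\,\log t$ (where $e^{-c(\log t)^2}$ beats every polynomial factor), which is precisely why the paper first removes $T_u>\sqrt t\log t$ by a separate crude lemma (Lemma \ref{lem-born-after-t1/2logt}) and then treats the remaining window $(R\sqrt t,\sqrt t\log t]$ \emph{together with} $[0,\tfrac1R\sqrt t)$ by the refined envelope-plus-sharp-tail argument. So the same barrier mechanism you correctly identified for the early regime is indispensable in the moderately late regime as well; with that repair (and with the type-$1$ envelope used to keep the excess in Lemma \ref{lem-Max-BBM-tail} positive, rather than a slope-$\sqrt2$ barrier imposed on the full two-type path, which is not directly covered by the quoted estimates), your plan becomes essentially the paper's proof.
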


We are going to show  Theorem \ref{thm-extremal-process-boundary-C2C3},
  postponing the proof of Lemma \ref{lem-born-after-t1/2} in the end of this section.

\begin{proof}[Proof of Theorem \ref{thm-extremal-process-boundary-C2C3}]
In this proof,
we set $\widehat{\mathcal{E}}_{t}:=\sum_{u \in N_t^2}   \delta_{  X_u(t)-m^{2,3}_{t} }  $, and  for fixed $R>0$, let
  \begin{equation*}
 \widehat{\mathcal{E}}_t^R
 :=\sum_{u \in N_t^2}
 1_{\left\{  T_{u}\in [\frac{1}{R} \sqrt{t}, R \sqrt{t}]  \right\}}
  \delta_{  X_u(t)-m^{2,3}_{t} } .
  \end{equation*}
  Thanks to Lemma \ref{lem-born-after-t1/2},
   $\widehat{\mathcal{E}}_t^R$ is very close to $\widehat{\mathcal{E}}_{t}$. More precisely, for all $\varphi \in \mathcal{T}$,  we have,
  \begin{equation*}
   \left|\mathbb{E}\left(e^{-\left\langle\widehat{\mathcal{E}}_t^R, \varphi\right\rangle}\right)-\mathbb{E}\left(e^{-\left\langle  \widehat{\mathcal{E}}_{t}, \varphi\right\rangle}\right)\right|  \leq \mathbb{P}\left(\exists u \in N_t^2: X_u(t) \geq m^{2,3}_{t}-A,  T_{u}\notin [\frac{1}{R} \sqrt{t}, R \sqrt{t}]  \right),
  \end{equation*}
  where $A$ is chosen such that
  $\mathrm{supp}(\varphi) \subset [-A, \infty)$.
  Thus applying Lemma \ref{lem-born-after-t1/2} we have
  \begin{equation}\label{eq-E-hat-t-2-3}
  \lim _{R \rightarrow \infty} \limsup _{t \rightarrow \infty}
  \left| \mathbb{E} \left(e^{-\left\langle\widehat{\mathcal{E}}_t^R, \varphi\right\rangle}\right)-\mathbb{E}\left(e^{-\left\langle \widehat{\mathcal{E}_{t}}, \varphi\right\rangle}\right)\right|=0.
  \end{equation}
 Hence in order to study the asymptotic behavior of
  $\mathbb{E}\left(e^{-\left\langle \widehat{\mathcal{E}_{t}}, \varphi\right\rangle}\right)$, we only need to study the convergence of $\widehat{\mathcal{E}}_t^R$  as $t$ and  then $R$ goes to infinity.

Fix $R>0$ and $\varphi \in \mathcal{T}$. Observe that we can rewrite $ \langle\widehat{\mathcal{E}}_t^R, \varphi \rangle $ as
  \begin{equation*}
    \sum\nolimits_{\substack{u \in \mathcal{B} \\   T_{u}\in [\frac{1}{R} \sqrt{t}, R \sqrt{t}] }}   \sum\nolimits_{\substack{u^{\prime} \in N_t^2 \\ u^{\prime} \succcurlyeq u}}
    \varphi\left( X_{u'}(t)-X_{u}(T_{u})- \sqrt{2}(t-T_{u}) + X_{u}(T_{u})- \sqrt{2}T_{u}+  \frac{1}{2\sqrt{2}} \log t  \right),
\end{equation*}
where $u^{\prime} \succcurlyeq u$ means that $u'$ is a descendant of $u$.
   Using the branching property first and then
   Lemma \ref{many-to-one-1}(iii),
    we have
\begin{equation*}
\begin{aligned}
&\mathbb{E}\left(e^{-\left\langle\widehat{\mathcal{E}}_t^R, \varphi\right\rangle}\right)
 =  \mathbb{E}\bigg(    \prod_{\substack{u \in \mathcal{B} \\   T_{u}\in [\frac{1}{R} \sqrt{t}, R \sqrt{t}] }}  f\big(t-T_{u} ,   X_{u}(T_{u}) -\sqrt{2} T_{u} + \frac{1}{2 \sqrt{2}} \log t  \big)  \bigg)
\\
&=\mathbb{E}\bigg(\exp \bigg\{ -\alpha \int_{\frac{1}{R}\sqrt{t}}^{R\sqrt{t}} \sum_{u \in N_s^1}   F\big(t-s, X_u(s)-\sqrt{2}s + \frac{1}{2\sqrt{2}}\log t \big) \dif s \bigg\} \bigg),
\end{aligned}
\end{equation*}
where $F(r, x)=1-f(r,x)=1-\mathsf{E}\left(  \exp\{-\sum_{u \in \mathsf{N}_r} \varphi ( \mathsf{X}_u(r)- \sqrt{2}r + x   )\}\right)$.
Additionally, as the speed $v=\sqrt{2 \beta \sigma^2}$ of the BBM of type $1$ is less than $\sqrt{2}$,  we have,  for all $s\in [\frac{1}{R}\sqrt{t}, R \sqrt{t}]$ and  $u \in N^{1}_{s}$,  $ \sqrt{2}s- X_{u}(s) = \Theta(s)$ a.s. since  $\limsup\limits_{t \to \infty}  \frac{1}{t}\max_{u \in N^{1}_{t} } |X_{u}(t) |= v$ almost surely.
Then applying part (i) of Lemma \ref{thm-Laplace-BBM-order}  we have,
uniformly  for $s\in [\frac{1}{R} \sqrt{t}, R \sqrt{t} ]$,
\begin{equation*}
 \begin{aligned}
  & F\left(t-s, X_{u}(s)-\sqrt{2}s + \frac{1}{2\sqrt{2}}\log t \right)\\
   & =(1+o(1))\gamma_{\sqrt{2}}(\varphi) \frac{(\sqrt{2}s-X_{u}(s)) }{t}e^{\sqrt{2}(X_{u}(s)-\sqrt{2}s)} e^{- \frac{(X_{u}(s)-\sqrt{2}s)^2}{2t}  } \text{ a.s.},
 \end{aligned}
\end{equation*}
as $t \rightarrow \infty$,  where the $o(1)$ term is deterministic.
Thus $  \mathbb{E}\left(e^{-\left\langle\widehat{\mathcal{E}}_t^R, \varphi\right\rangle}\right)$
\begin{align}
& =\mathbb{E}\bigg(\exp \bigg\{ - (1+o(1))\alpha\gamma_{\sqrt{2}}(\varphi)  \times \int_{\frac{1}{R} \sqrt{t}}^{R\sqrt{t}} \sum_{u \in N_s^1}   \frac{(\sqrt{2}s-X_{u}(s))}{t}e^{\sqrt{2}(X_{u}(s)-\sqrt{2}s)} e^{- \frac{(X_{u}(s)-\sqrt{2}s)^2}{2t}  }   \dif s \bigg\} \bigg) \notag \\
&=  \mathbb{E}\bigg(\exp \bigg\{ - (1+o(1))\alpha\gamma_{\sqrt{2}}(\varphi) \int_{\frac{1}{R}}^{R}   \mathcal{W}(\lambda, \lambda \sqrt{t})  \dif \lambda \bigg\} \bigg), \label{eq-exp-int-1}
\end{align}
where
\begin{equation*}
  \mathcal{W}(\lambda, t):= \sum_{u \in N^{1}_{t}} \lambda \left(\sqrt{2}- \tfrac{X_{u}(t)}{t}\right)e^{-\frac{\lambda^2}{2}(\sqrt{2}- \frac{X_{u}(t)}{t} )^2 } e^{\sqrt{2}(X_{u}(t) -\sqrt{2}t)}.
\end{equation*}

Recall that  $(\{X_{u}(t)\}_{u \in N^{1}_{t}},\mathbb{P} )$ has the same distribution as $( \{\frac{\sigma}{\sqrt{\beta}} \mathsf{X}_{u}(\beta t)\}_{u \in \mathsf{N}_{\beta t}},\mathsf{P})$. Applying  Proposition \ref{lem-functional-convergence-additive-martingale},
we have, for each $\lambda > 0$,
\begin{equation*}
  \lim_{t \to \infty} \mathcal{W}(\lambda, t) =\sqrt{2}\lambda (1-\sigma^2)e^{- (1-\sigma^2)^2\lambda^2} W^{(1)}_{\infty}(\sqrt{2}) \ \text{ in } L^{1}(\mathbb{P}),
\end{equation*}
 Then by the dominated convergence theorem, we have,  as $t\to \infty$,
 \begin{align*}
&  \E\left|  \int_{1/R}^{R} \mathcal{W}(\lambda, \lambda \sqrt{t}) \dif \lambda - \sqrt{2} \int_{1/R}^{R} \lambda (1-\sigma^2)e^{- (1-\sigma^2)^2\lambda^2}  W^{(1)}_{\infty}(\sqrt{2}) \dif \lambda  \right| \\
 & \leq    \int_{1/R}^{R}  \E\left|\mathcal{W}(\lambda, \lambda \sqrt{t})   - \sqrt{2} \lambda  (1-\sigma^2)e^{- (1-\sigma^2)^2\lambda^2}  W^{(1)}_{\infty}(\sqrt{2})  \right|\dif \lambda  \to 0 .
\end{align*}
 Here dominating functions exist  since
 $\E [\mathcal{W}(\lambda, \lambda \sqrt{t}) ]$, $ \E[\sqrt{2}\lambda   (1-\sigma^2)e^{- (1-\sigma^2)^2\lambda^2}  W^{(1)}_{\infty}(\sqrt{2})] $ are both bounded by $\max_{x \geq 0} xe^{-x^2/2}$.
Letting $t\to\infty$ in \eqref{eq-exp-int-1},
 we have
\begin{equation*}
  \lim_{t \to \infty}  \mathbb{E}\left(e^{-\left\langle\widehat{\mathcal{E}}_t^R, \varphi\right\rangle}\right)  =   \mathbb{E}\bigg(\exp \bigg\{ - \alpha\gamma_{\sqrt{2}}(\varphi) W^{(1)}_{\infty}(\sqrt{2}) \int_{\frac{1}{R}}^{R} \sqrt{2} \lambda  (1-\sigma^2)e^{- (1-\sigma^2)^2\lambda^2}   \dif \lambda \bigg\} \bigg) .
\end{equation*}
Then letting $R\to\infty$, combining \eqref{eq-E-hat-t-2-3}, we obtain
\begin{equation*}\lim_{t \to \infty}  \mathbb{E}\left(e^{-\left\langle\widehat{\mathcal{E}}_t, \varphi\right\rangle}\right)=
  \lim_{R \to \infty}  \lim_{t \to \infty}  \mathbb{E}\left(e^{-\left\langle\widehat{\mathcal{E}}_t^R, \varphi\right\rangle}\right) =
   \mathbb{E}\bigg(\exp \bigg\{  - \frac{\alpha  W^{(1)}_{\infty} (\sqrt{2})}{\sqrt{2}(1-\sigma^2)}    \gamma_{\sqrt{2}}(\varphi)  \bigg\} \bigg) ,
  \end{equation*}
  which is the Laplace functional of $ \operatorname{DPPP}\left( \frac{\alpha C_{\star} }{\sqrt{2}(1-\sigma^2)} \sqrt{2} W^{(1)}_{\infty}(\sqrt{2}) e^{-\sqrt{2} x}\dif  x  , \mathfrak{D}^{\sqrt{2}}\right)$. Using \cite[Lemma 4.4]{BBCM22}, we complete the proof of Theorem \ref{thm-extremal-process-boundary-C2C3}.
\end{proof}

Now it suffices to show Lemma \ref{lem-born-after-t1/2}. First we give  prior estimates for the type transformation times $T_{u}$
for  particles $u \in N^{2}_{t}$
with  positions higher than $m^{2.3}_{t}$.

\begin{lemma}\label{lem-born-after-t1/2logt}
For any $A > 0$ we have
 	\begin{equation}\label{eq-born-after-t1/2logt}
  \limsup _{t \rightarrow \infty} \mathbb{P}\left(\exists u \in N_{t}^{2}: T_{u} >  \sqrt{t} \log t , X_{u}(t) \geq m^{2,3}_{t}  -A \right)=0.
 \end{equation}
 \end{lemma}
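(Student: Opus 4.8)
The plan is to control the probability in \eqref{eq-born-after-t1/2logt} by a bare first moment and to show that this first moment already tends to $0$. In contrast with the regime $T_u=\Theta(\sqrt t)$, no Bramson-type barrier is needed here: requiring the type transformation time to exceed $\sqrt t\log t$ pushes the many-to-one spine onto a trajectory whose Gaussian cost is $e^{-\Theta((\log t)^2)}$, which beats any power of $t$.

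Concretely, by Markov's inequality the probability is at most $\mathbb{E}\bigl(\#\{u\in N_t^2:\ T_u>\sqrt t\log t,\ X_u(t)\ge m^{2,3}_{t}-A\}\bigr)$, and I would evaluate this expectation with the many-to-one formula Lemma \ref{many-to-one-1}(i), taking $f((x_r)_{r\le t},\tau)=\mathbf 1_{\{\tau>\sqrt t\log t\}}\mathbf 1_{\{x_t\ge m^{2,3}_{t}-A\}}$. This yields
\[
\alpha\int_{\sqrt t\log t}^{t}e^{\beta s+(t-s)}\,\mathbf{P}\bigl(\sigma B_s+(B_t-B_s)\ge m^{2,3}_{t}-A\bigr)\dif s,
\]
where $\sigma B_s+(B_t-B_s)$ is centred Gaussian with variance $\Sigma_s^2:=\sigma^2 s+(t-s)=t-(1-\sigma^2)s$. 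Bounding the Gaussian tail crudely by $e^{-a^2/(2\Sigma_s^2)}$ with $a:=m^{2,3}_{t}-A$, and using the defining identity $\beta+\sigma^2=2$ of $\mathscr{B}_{II,III}$ (so that $\beta-1=1-\sigma^2$), the task reduces to showing $\int_{\sqrt t\log t}^{t}e^{E(s)}\dif s\to0$, where
\[
E(s):=(1-\sigma^2)s+t-\frac{a^2}{2\bigl(t-(1-\sigma^2)s\bigr)}.
\]

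The heart of the matter is a short analysis of $E$. Writing $c:=1-\sigma^2\in(0,1)$, one computes $E'(s)=c\bigl(1-a^2/(2(t-cs)^2)\bigr)$, which is negative precisely when $cs>t-a/\sqrt2=\tfrac14\log t+O(1)$; hence for all large $t$ the function $E$ is strictly decreasing on $[\sqrt t\log t,\,t]$, so its supremum there equals $E(\sqrt t\log t)$. Expanding $a^2/\bigl(2(t-cs)\bigr)$ at $s=\sqrt t\log t$ (where $cs/t\to0$), the term linear in $s$ cancels against the $(1-\sigma^2)s$ term — this cancellation is exactly what the relation $\beta+\sigma^2=2$ provides — leaving the leading quadratic penalty, so that $E(\sqrt t\log t)=\tfrac12\log t+\sqrt2\,A-(1-\sigma^2)^2(\log t)^2\bigl(1+o(1)\bigr)$. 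Therefore $\int_{\sqrt t\log t}^{t}e^{E(s)}\dif s\le t\,e^{E(\sqrt t\log t)}=e^{\sqrt2 A}\,t^{3/2}e^{-(1-\sigma^2)^2(\log t)^2(1+o(1))}\to0$, which proves the lemma.

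I expect the only mildly technical point to be the uniform bookkeeping of the expansion of $a^2/(2\Sigma_s^2)$ over the whole range $s\in[\sqrt t\log t,\,t]$ — in particular ruling out an interior maximum of $E$ and checking that the region $s\approx t$ (where $\Sigma_s^2\approx\sigma^2 t$) contributes nothing — but both are immediate consequences of the monotonicity of $E$. No probabilistic input beyond the many-to-one identity and an elementary Gaussian tail bound is required; alternatively one could argue through the type-$1$ upper envelope \eqref{eq-upper-envelope-0} together with the maximum tail bound \eqref{eq-Max-BBM-tail-2}, at the cost of a union bound over type-$1$ particles.
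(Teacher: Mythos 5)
Your proposal is correct and follows essentially the same route as the paper: Markov's inequality, the many-to-one formula of Lemma \ref{many-to-one-1}(i), a crude Gaussian tail bound, and the observation that for $s\ge\sqrt{t}\log t$ the exponent carries a penalty of order $(\log t)^2$ (coming from the relation $\beta+\sigma^2=2$, i.e.\ $\beta-1=1-\sigma^2$) which beats every polynomial prefactor. The only difference is bookkeeping in the final integral estimate: the paper substitutes $s=ut$ and uses concavity of $\varphi(u)=\beta u+(1-u)-\frac{1}{\sigma^2u+1-u}$ together with $\varphi(u)\le-\delta u^2$, whereas you show the exponent $E(s)$ is decreasing on $[\sqrt{t}\log t,\,t]$ and evaluate it at the left endpoint — both give the same $t^{O(1)}e^{-\Theta((\log t)^2)}\to 0$ conclusion.
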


 \begin{proof}
Let $Y^A_{t}:= \sum_{u \in N^{2}_{t} } 1_{ \{T_{u} >  \sqrt{t} \log t , X_{u}(t) \geq m^{2,3}_{t} -A\} } $.
By Markov's inequality, the probability in \eqref{eq-born-after-t1/2logt} is bounded by $\mathbb{E}\left(Y^A_t\right)$. It suffices to show that  $\mathbb{E}\left(Y^A_t\right) \to 0$ as $t \to \infty$.

 Using
 Lemma \ref{many-to-one-1}(i)
  and  then the tail probability of Gaussian random variable
   ($\int^\infty_xe^{-y^2/2} \dif y\leq \frac{1}{x}e^{-x^2/2} $ for $x > 0 $),
 we have
\begin{equation*}
\begin{aligned}
\mathbb{E}\left(Y^A_t\right)
&=\alpha \int_{\sqrt{t}\log t}^{t} e^{\beta s+t-s} \mathbf{P}\left(\sigma B_s+\left(B_t-B_s\right) \geq m^{2,3}_{t} -A\right) \dif s \\
& \lesssim_{\alpha} \int_{\sqrt{t}\log t}^{t} e^{\beta s+(t-s)} \frac{  \sqrt{\sigma^2 s+t-s}}{m^{2,3}_{t}-A} e^{-\frac{(m^{2,3}_{t}-A)^2}{2\left(\sigma^2 s+t-s\right)}} \dif s \\
& \lesssim_{A,\alpha,\beta,\sigma^2}  t^{-1/2}  \int_{\sqrt{t}\log t}^{t} e^{\beta s+(t-s) -  \frac{t^2}{\left(\sigma^2 s+t-s\right) }   }  e^{\frac{ t \log t  }{2\left(\sigma^2 s+t-s\right)}} \dif s .
\end{aligned}
\end{equation*}
Set  $\varphi: u \mapsto \beta u+(1-u)-\frac{1}{ \left(\sigma^2 u+1-u\right)}$. Making a change of variable $s=ut$, we have
\begin{equation*}
\mathbb{E}\left(Y^A_t\right)
\lesssim t^{1 / 2} \int_{\frac{\log t}{\sqrt{t}}}^1   \exp \left\{t \varphi(u)+\frac{   \log t  }{2\left(\sigma^2 u+1-u\right)}\right\}  \dif u  \leq  t^{ \frac{1}{2}+ \frac{1}{2 \sigma^2} }  \int_{\frac{\log t}{\sqrt{t}}}^1   e^{t \varphi(u)}   \dif u.
\end{equation*}
Since $\varphi^{\prime}(u)=\beta-1- \frac{1- \sigma^2}{ \left(\sigma^2 u+1-u\right)^2} <0$ and $ \varphi^{\prime \prime}(u)=- \frac{2\left(1-\sigma^2\right)^2}{\left(\sigma^2 u+1-u\right)^3} <0$,
 $\varphi$ is concave,
and takes maximum $\varphi(0)=0$ at point $u=0$.
By Taylor's expansion,
 there exists a constant $\delta>0$ (depending only on $\sigma^2$) such that $\varphi(u) \leq-\delta u^2$ for all $u \in[0,1]$. Thus
\begin{equation*}
\mathbb{E}\left(Y^A_t\right)
\lesssim   t^{ \frac{1}{2}+ \frac{1}{2 \sigma^2} }  \int_{\frac{\log t}{\sqrt{t}}}^1   e^{-\delta u^2 t }   \dif u
\leq   t^{ \frac{1}{2 \sigma^2} }
\int_{ \log t }^{\infty}   e^{-\delta z^2  }   \dif z  \overset{t \to \infty }{\longrightarrow} 0,
\end{equation*}
 which gives the desired result.
 \end{proof}

\begin{proof}[Proof of Lemma \ref{lem-born-after-t1/2}]
Fix  $ A,  K, t\geq 0$ and $R >1$. Set
$I^{R}_{t}:=\left[ 0, \frac{1}{R}\sqrt{t}\right) \cup \left( R \sqrt{t},  \sqrt{t} \log t \right]$, and
\begin{equation*}
Y_{t}(A, R,K )=\sum_{u \in \mathcal{B}} 1_{\left\{T_{u} \in  I^{R}_{t}  \right\}}  1_{\{ |X_{u}(T_{u})| \leq  v T_{u}+K \}}  1_{\left\{M_{t}^{u} \geq m^{2,3}_{t}-A\right\}},
\end{equation*}
where $M_{t}^{u}$ is the maximal position of the  descendants at time $t$ of the individual $u$.
In other words, $Y_{t}(A, R,K )$ is the number of type $2$ particles that are born from a type $1$ particle during the  time interval $I^{R}_{t}$ and have a descendant at time $t$ above $m^{2,3}_{t}-A$.  By Markov's inequality,
\begin{equation*}
\begin{aligned}
&  \mathbb{P}\left(\exists u \in N_{t}^{2}:  T_{u} \notin \left[ \frac{1}{R} \sqrt{t}  ,R \sqrt{t}\right], X_{u}(t) \geq m^{2,3}_{t} -A\right) \\
\leq &
\mathbb{P}\left(\exists s \leq t, u \in N^{1}_s: | X_{u}(s) | \geq v s +K\right)  \\
&+ \mathbb{P}\left(\exists u \in N_{t}^{2}: T_{u} \geq   \sqrt{t} \log t , X_{u}(t) \geq m^{2,3}_{t}  -A \right)  +\mathbb{E}\left(Y_{t}(A, R,K)\right) .
\end{aligned}
\end{equation*}
By \eqref{eq-upper-envelope-0} and Lemma \ref{lem-born-after-t1/2logt}, it suffices to show that
 $  \lim\limits_{R \to \infty} \limsup\limits _{t \rightarrow \infty} \mathbb{E}\left(Y_{t}(A, R,K)\right) =0 $.

 Let  $F_{t}(r, x):=\mathsf{P} (x+ \mathsf{M}_{r} \geq m^{2,3}_{t}-A )$, where $\mathsf{M}_{r}=\max\limits_{u \in \mathsf{N}_{r}} \mathsf{X}_{u}(r)$.
 Applying
the branching property and
Lemma \ref{many-to-one-1}(i), we have
\begin{equation}\label{eq-E-Y-t-A-R}
\begin{aligned}
&\mathbb{E}\left(Y_{t}(A, R,K)\right) =\mathbb{E}\left(\sum_{u \in \mathcal{B}}  1_{\left\{T_{u} \in I^{R}_{t}   \right\}}    1_{\{ |X_{u}(T_{u})| \leq  v T_{u}+K \}}   F_{t}\left(t-T_{u}, X_{u}(T_{u})\right)\right) \\
&=\alpha \int_{ I^{R}_{t} } e^{\beta s} \mathbf{E}\left(F_{t}\left(t-s, \sigma B_{s}\right) 1_{ \{ |\sigma B_{s}| \leq v s + K\} }  \right)   \dif s  \\
&= \alpha \int_{  I^{R}_{t} }  \mathbf{E}\left(e^{-\theta \sigma B_{s}}  F_{t}\left(t-s, \sigma B_{s} + vs \right) \right) 1_{ \{ -2 vs-K \leq \sigma B_{s} \leq  + K\} }  \dif s,
\end{aligned}
\end{equation}
where in the last equality we replaced $\sigma B_{s}$ by $\sigma B_{s}+ vs$  by  Girsanov's  theorem.
Moreover, since
 $\log t-\log (t-s) =o(1)$ for  $s \leq \sqrt{t}\log t$,
 we have  for large $t$,
\begin{align*}
  & F_{t}(t-s, vs+z ) = \mathsf{P}\left(\mathsf{M}_{t-s} \geq  \sqrt{2} t- \frac{1}{2\sqrt{2}}\log t   - A- vs-z  \right)\\
   & \leq  \mathsf{P}\left(\mathsf{M}_{t-s} \geq  \sqrt{2} (t-s)-  \frac{3}{2\sqrt{2}}\log (t-s)  + (\sqrt{2} -v)s +\frac{1}{\sqrt{2}}\log t - z-2A   \right)  .
\end{align*}
For $z \in [-2 vs-K,K ]$ and $s \leq \sqrt{t}\log t$, we have
$1 <(\sqrt{2} -v)s +\frac{1}{\sqrt{2}}\log t - z-2A \lesssim \sqrt{t}\log t $ for large $t$.
Applying Lemma \ref{lem-Max-BBM-tail}, we have
\begin{equation}\label{eq-bound-F}
  F_{t}(t-s, vs+z )
 \lesssim_{A,\beta,\sigma^2 }   \frac{  s + \log t +|z| }{ t }  e^{-\sqrt{2}(\sqrt{2}-v) s+\sqrt{2} z  }
 e^{-\frac{ ( \sqrt{2} - v )^2s^2}{2t}  },
\end{equation}
 where we used the fact that $- \frac{1}{2(t-s)} \big[( \sqrt{2} - v )s +\frac{1}{\sqrt{2}} \log t-z -2A \big]^2  \leq   -\frac{ ( \sqrt{2} - v )^2s^2}{2 t } $ for large $t$.
 Replacing $z$ by $\sigma B_{s}$ in \eqref{eq-bound-F}  and then substituting \eqref{eq-bound-F}  into \eqref{eq-E-Y-t-A-R},
we obtain
\begin{align*}
  \mathbb{E}\left(Y_{t}(A, R,K)\right) &  \lesssim_{\alpha,A,\beta,\sigma^2} \int_{  I^{R}_{t} }  \mathbf{E}\left( \frac{s+\log t+|\sigma B_{s}|}{t} e^{-(\theta-\sqrt{2}) \sigma B_{s}  -\sqrt{2}(\sqrt{2}-v) s  }  \right)  e^{-\frac{ ( \sqrt{2} - v )^2s^2}{2t}  } \dif s  \\
  & = \int_{  I^{R}_{t} } \mathbf{E}\left( \frac{s+\log t+|\sigma B_{s} -(\theta-\sqrt{2})\sigma^2 s |}{t}   \right)  e^{-\frac{ ( \sqrt{2} - v )^2s^2}{2t}  } \dif s   \\
  & \lesssim   \int_{  [0, \frac{1}{R}) \cup ( R ,   \log t] }  \left(\lambda + \frac{\log t + \mathbf{E} |B_{\lambda \sqrt{t}} |}{\sqrt{t}} \right)    e^{-\frac{ ( \sqrt{2} - v )^2 \lambda ^2}{2}  } \dif  \lambda    ,
\end{align*}
where in the equality we used
the Girsanov's theorem  and the fact that $\frac{1}{2}(\theta-\sqrt{2})^2 \sigma^2 = \sqrt{2}(\sqrt{2}-v)$ when  $\beta + \sigma^2 = 2$, and in the last $\lesssim$
we made a change of variable $s=\sqrt{t} \lambda$.
Then the desired results follows from a simple computation.
\end{proof}

\section{Boundary between $\mathscr{C}_{I}$ and $\mathscr{C}_{III}$}

In this section we always assume that
$(\beta,\sigma^2) \in \mathscr{B}_{I,III}$, i.e.,
 $\beta> 1$, $\frac{1}{\beta}+ \frac{1}{\sigma^2}=2$.  Recall that
 $m^{1,3}_{t}:= v t - \frac{1}{2 \theta} \log t$, where $v=\sqrt{2 \beta \sigma^2}$ and $\theta =\sqrt{2 \beta / \sigma^2}$.

Firstly, we describe the paths of extremal particles. It turns out that for a type $2$ particle $u \in N^{2}_{t}$ above level $m^{1,3}_{t}$ at time $t$,  its type transformation time $T_{u}$ should be $s= t- \Theta(\sqrt{t})$,  and its position $x$ at time $s$ should  belong to the set
  \begin{equation*}
   \Gamma_{s,t}^{R}:= \left\{ x :    |\delta(x;s,t)| \leq  R \sqrt{t-s}\right\} \,,  \text{ where } \   \delta(x;s,t) := x- vs + (\theta- v)(t-s).
  \end{equation*}

\begin{lemma}\label{lem-born-before-t-t1/2}
  For any $A>0$, we have
  \begin{equation}\label{eq-born-before-t-t1/2}
  \lim _{R \rightarrow \infty} \limsup _{t \rightarrow \infty} \mathbb{P}\left(\exists u \in N_{t}^{2}: t-T_{u} \notin [ \frac{1}{R} \sqrt{t}  ,R \sqrt{t}] , X_{u}(t) \geq m^{1,3}_{t}  -A \right)=0,
  \end{equation}
 and
  \begin{equation}\label{eq-born-position}
    \lim _{R \rightarrow \infty}   \limsup _{t \rightarrow \infty} \mathbb{P}\left(\exists u \in N_{t}^{2}:  X_{u}(t) \geq m^{1,3}_{t}  -A,  X_{u}(T_{u}) \notin \Gamma^{R}_{T_{u},t} \right) =0 .
    \end{equation}
  \end{lemma}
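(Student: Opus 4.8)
The plan is to prove both statements by a first-moment (Markov inequality) argument, exactly paralleling the structure of the proof of Lemma \ref{lem-born-after-t1/2} in the previous section, but now working near the type-1 speed $v$ rather than near $\sqrt 2$. First I would reduce everything to counting particles $u \in \mathcal{B}$ (type-2 particles born directly from a type-1 particle) by the branching property: a type-2 particle $u' \in N^2_t$ above $m^{1,3}_t - A$ has a unique ancestor $u \in \mathcal{B}$ with birth time $T_u$, and conditionally on $(X_u(T_u), T_u)$ the descendants of $u$ form a standard BBM of duration $t - T_u$. So I would define, for the set of ``bad'' transformation times $s$, a counting variable of the form
\begin{equation*}
  Y_t = \sum_{u \in \mathcal{B}} 1_{\{T_u \text{ bad}\}} 1_{\{|X_u(T_u)| \le v T_u + K\}} 1_{\{M^u_t \ge m^{1,3}_t - A\}},
\end{equation*}
first intersecting with the high-probability event $\{\forall s\le t,\ u\in N^1_s:\ |X_u(s)| \le vs + K\}$ controlled by \eqref{eq-upper-envelope-0}, and also discarding the regime $t - T_u$ very large (say $t - T_u > \sqrt t \log t$) or $T_u$ very small by a separate crude estimate analogous to Lemma \ref{lem-born-after-t1/2logt}. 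Then Lemma \ref{many-to-one-1}(i) turns $\mathbb{E}[Y_t]$ into an integral $\alpha \int e^{\beta s}\, \mathbf{E}[F_t(t-s,\sigma B_s)\, 1_{\{\cdots\}}]\,\mathrm{d}s$ where $F_t(r,x) = \mathsf{P}(x + \mathsf{M}_r \ge m^{1,3}_t - A)$, and a Girsanov shift $\sigma B_s \mapsto \sigma B_s + vs$ (tilting by $e^{-\theta\sigma B_s}$, using $\theta\sigma = v/\sigma$) produces the type-1 exponential weight.

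The key computation is then to insert the Bramson-type tail bound Lemma \ref{lem-Max-BBM-tail} for $\mathsf{M}_r$ with $r = t - s \sim t$. Writing $X_u(T_u) = vs + z$, the event forces $\mathsf{M}_{t-s} \ge \sqrt2(t-s) - \tfrac{3}{2\sqrt2}\log(t-s) + h$ with $h = (\sqrt2 - v)(t-s) - vz + (\text{log corrections}) - z + O(A)$; but one must be careful because here $v < \sqrt2$ is \emph{not} forced and in fact on $\mathscr{B}_{I,III}$ one has $v = \sqrt{2\beta\sigma^2}$ which may exceed $\sqrt2$, so the relevant ``excess'' is governed by the quadratic in $z$ and in $t-s$. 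The algebraic identity that should make everything work is the one flagged in the Heuristics section: on $\tfrac1\beta + \tfrac1{\sigma^2} = 2$ one has, after writing $t - s = \lambda\sqrt t$ and $z = a\sqrt t$ (equivalently $\delta = z - vs + (\theta-v)(t-s)$), that the exponent collapses to $-\tfrac{1}{2\lambda}[a - (\theta-v)\lambda]^2 = -\tfrac{\delta^2}{2(t-s)}$ up to lower order, together with a polynomial prefactor $\Theta(\tfrac{(t-s) + \log t + |z|}{t})$ from the $x$-factor in Lemma \ref{lem-Max-BBM-tail} and from the $\tfrac1{\sqrt{t-s}}$ in the Gaussian density. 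Plugging this back, after the Girsanov shift the remaining Brownian expectation becomes (up to polynomial factors) $\mathbf{E}[\exp(-\tfrac{(\sigma B_s + (\theta-v)(t-s) \pm \cdots)^2}{2(t-s)})]$, i.e.\ essentially a Gaussian integral, and the change of variables $s = t - \lambda\sqrt t$ reduces $\mathbb{E}[Y_t]$ to an integral of the schematic form
\begin{equation*}
  \int_{[0,1/R)\cup(R,\log t]} \Bigl(\lambda + \frac{\log t + \sqrt t}{t}\cdot(\cdots)\Bigr)\, e^{-c\lambda^2}\,\mathrm{d}\lambda \;+\; (\text{integral over the } \Gamma^R \text{ complement}),
\end{equation*}
which tends to $0$ as $t\to\infty$ and then $R\to\infty$. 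For \eqref{eq-born-position} one runs the same computation but keeps the indicator $1_{\{X_u(T_u)\notin \Gamma^R_{T_u,t}\}}$, i.e.\ $|\delta(\,\cdot\,)| > R\sqrt{t-s}$; since the Gaussian weight is $e^{-\delta^2/2(t-s)}$, restricting to $|\delta| > R\sqrt{t-s}$ costs a factor $e^{-\Theta(R^2)}$ (plus polynomial), giving the double limit $0$.

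The main obstacle I anticipate is \textbf{the bookkeeping of the precise polynomial and Gaussian exponents} when applying Lemma \ref{lem-Max-BBM-tail} with $r = t-s$ of order $t$ while $s$ ranges over a window of width $\sqrt t \log t$: one has to track the quadratic correction $-\tfrac{h^2}{2(t-s)}$ carefully enough to extract exactly the Gaussian factor in $\delta$, and simultaneously control the prefactor uniformly. A secondary subtlety is that, unlike in the $\mathscr{B}_{II,III}$ case, one cannot use the crude bound $|X_u(T_u)| \le vT_u + K$ alone to localize the position at time $T_u$ — one genuinely needs the sharper $\Gamma^R_{s,t}$ localization, and the region $t - T_u \in [\tfrac1R\sqrt t, R\sqrt t]$ must be handled together with the position bound, so the two parts of the lemma are proved in tandem (the position bound \eqref{eq-born-position} is what feeds into the proof of Theorem \ref{thm-extremal-process-boundary-C1C3}, via Lemma \ref{lem-bridge-estimate}). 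I would also need the auxiliary estimate ruling out $t - T_u > \sqrt t\log t$; this is where Lemma \ref{lem-bridge-estimate-0} / \eqref{eq-brownian-estimate} enters, controlling the Brownian bridge of the type-1 path staying below its envelope, exactly as the concavity argument in Lemma \ref{lem-born-after-t1/2logt} did in the previous section.
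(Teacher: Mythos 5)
Your overall skeleton (discard $t-T_u>\sqrt t\log t$ by a crude Gaussian/concavity estimate, then first moment via many-to-one plus Girsanov, with the boundary identity $\tfrac1\beta+\tfrac1{\sigma^2}=2$ collapsing the exponent to a Gaussian in $\lambda=(t-s)/\sqrt t$, and a change of variables at the end) is the same as the paper's. But there is a genuine gap in the main step: your counting variable localizes the type-1 ancestor only through its \emph{endpoint}, $1_{\{|X_u(T_u)|\le vT_u+K\}}$, as in the $\mathscr{B}_{II,III}$ case, and with that alone the Markov-inequality argument fails. Because $m^{1,3}_t$ carries the correction $-\tfrac1{2\theta}\log t$ (rather than $-\tfrac3{2\theta}\log t$), the unbarriered first moment picks up a factor $e^{\theta\cdot\frac{1}{2\theta}\log t}=\sqrt t$: a direct computation (Girsanov, then the saddle $X_u(T_u)\approx vT_u-(\theta-v)(t-T_u)$, which already satisfies your endpoint constraint) shows that the expected count per unit of $r=t-T_u$ is $\Theta(1)\,e^{-c(r/\sqrt t)^2}$, so the expectation over $t-T_u\in[0,\tfrac1R\sqrt t)\cup(R\sqrt t,\sqrt t\log t]$ is of order $\sqrt t\,(R^{-1}+e^{-cR^2})$, which diverges as $t\to\infty$ for every fixed $R$. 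The prefactor $\Theta\bigl(\tfrac{(t-s)+\log t+|z|}{t}\bigr)$ that your sketch attributes to the $x$-factor in Lemma \ref{lem-Max-BBM-tail} together with the Gaussian density is not produced by those ingredients (after integrating over the position at time $T_u$ they yield an $O(1)$ prefactor); it can only come from a ballot-type estimate. The same problem hits \eqref{eq-born-position}: restricting to $|\delta|>R\sqrt{t-s}$ costs $e^{-\Theta(R^2)}$ \emph{times} $\sqrt t$, so the iterated limit is not $0$ without the extra $1/\sqrt t$-type gain.

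This is exactly why the paper puts the full path constraint $X_u(r)\le vr+K$ for \emph{all} $r\le T_u$ inside the counting variable (legitimate by \eqref{eq-upper-envelope-0}), works with the two-piece Brownian path coming from Lemma \ref{many-to-one-1}(i) over $N^2_t$, and then conditions on the terminal position and applies the Brownian-bridge estimates: Lemma \ref{lem-bridge-estimate} (proved in the appendix precisely for this purpose) in the proof of \eqref{eq-born-before-t-t1/2}, and Lemma \ref{lem-bridge-estimate-0} in the proof of \eqref{eq-born-position}. These supply the factor $\asymp\frac{(t-s)+\log t+|x|}{t}$, which turns the divergent $\sqrt t\int e^{-c\lambda^2}\,\dif\lambda$ into $\int(\lambda+o(1))e^{-c\lambda^2}\,\dif\lambda$ over the bad $\lambda$-range, and this is what vanishes as $R\to\infty$. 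Your mention of Lemma \ref{lem-bridge-estimate-0} only for the auxiliary regime $t-T_u>\sqrt t\log t$ misplaces it: that regime needs no bridge estimate (plain Gaussian tails and concavity suffice), whereas the main window $t-T_u\le\sqrt t\log t$ cannot be handled without it. (Minor further points: in this window $r=t-s$ is $O(\sqrt t\log t)$, not of order $t$; and Lemma \ref{lem-Max-BBM-tail} in the large-deviation regime $x\asymp r$ is polynomially lossy, which is harmless but is not what saves the argument.) Your plan could be repaired by inserting the pathwise barrier and the bridge estimate for the type-1 trajectory conditioned on its endpoint, which essentially reproduces the paper's proof.
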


We will postpone the proof of Lemma \ref{lem-born-before-t-t1/2}
to the end of this section and show
Theorem \ref{thm-extremal-process-boundary-C1C3} first.

\begin{proof}[Proof of Theorem \ref{thm-extremal-process-boundary-C1C3}]
 In this proof,
for simplicity, for $t>0$, we set $\widehat{\mathcal{E}}_{t}:=\sum_{u \in N_t^2}   \delta_{  X_u(t)-m^{1,3}_{t} }  $,  and for  $R>0$, we set $\Omega^{R}_{t}:=\{ (s,x): t-s \in [ \frac{1}{R} \sqrt{t}  , R \sqrt{t}] ,  x \in \Gamma_{s,t}^{R}  \}$  and
 \begin{equation*}
  \widehat{\mathcal{E}}_t^R
:=\sum_{u \in N_t^2} 1_{\left\{ (T_{u},X_{u}(T_{u})) \in \Omega^{R}_{t}  \right\}}    \delta_{  X_u(t)-m^{1,3}_{t}}.
     \end{equation*}
 Take $A>0$ satisfying  $\mathrm{supp}(\varphi) \subset [-A,\infty)$.
  For all $\varphi \in \mathcal{T}$,  we have
  \begin{equation*}
   \left|\mathbb{E}\left(e^{-\left\langle\widehat{\mathcal{E}}_t^R, \varphi\right\rangle}\right)-\mathbb{E}\left(e^{-\left\langle  \widehat{\mathcal{E}}_{t}, \varphi\right\rangle}\right)\right|
     \leq
    \mathbb{P}\left(\exists u \in N_{t}^{2}, X_{u}(t) \geq m^{1,3}_{t}  -A, (T_{u}, X_{u}(T_{u})) \notin \Omega^{R}_{t}  \right).
  \end{equation*}
 Thus by  Lemma \ref{lem-born-before-t-t1/2}
we have
  \begin{equation}\label{eq-E-hat-t-1-3}
  \lim _{R \rightarrow \infty} \limsup _{t \rightarrow \infty}
  \left| \mathbb{E} \left(e^{-\left\langle\widehat{\mathcal{E}}_t^R, \varphi\right\rangle}\right)-\mathbb{E}\left(e^{-\left\langle \widehat{\mathcal{E}_{t}}, \varphi\right\rangle}\right)\right|=0.
  \end{equation}

  We now show the convergence of $\widehat{\mathcal{E}}_t^R$  first as $t$ and then $R$ goes to $\infty$.
Recall that $u^{\prime} \succcurlyeq u$ means that $u'$ is a descendant of $u$. We rewrite $ \langle\widehat{\mathcal{E}}_t^R, \varphi\rangle $ as
\begin{equation*}
 \sum\nolimits_{ \substack{u \in \mathcal{B}\\ (T_{u},X_{u}(T_{u})) \in \Omega^{R}_{t} }}    \sum\nolimits_{\substack{u^{\prime} \in N_t^2 \\ u^{\prime} \succcurlyeq u}}
   \ \varphi(X_{u'}(t)-X_{u}(T_{u})- \theta (t-T_{u})
    +  \delta(X_{u}(T_{u});T_{u},t) +\frac{1}{2 \theta} \log t ).
  \end{equation*}
    Let $f(r,x):=\mathsf{E}\left(  \exp\{-\sum_{u \in \mathsf{N}_r} \varphi (\mathsf{X}_u(r)- \theta r + x   )\}\right)$ and $F:=1-f$.
    Using the branching property first  and then applying
Lemma \ref{many-to-one-1}(iii),
we have
\begin{equation*}
  \begin{aligned}
  &\mathbb{E}\left(e^{-\left\langle\widehat{\mathcal{E}}_t^R, \varphi\right\rangle}\right)
   =  \mathbb{E}\bigg[  \prod\nolimits_{\substack{u \in \mathcal{B} \\   (T_{u},X_{u}(T_{u}))\in \Omega^{R}_{t}  }}  f\bigg(
   t-T_{u} ,   \delta(X_{u}(T_{u});T_{u},t) +\frac{1}{2 \theta} \log t  \bigg)
    \bigg]
  \\
  &=\mathbb{E}\bigg(\exp \bigg\{ -\alpha \int^{t-\frac{1}{R}\sqrt{t}}_{t- R\sqrt{t}} \sum_{u \in N_s^1}   F\bigg(t-s,  \delta(X_{u}(s);s,t) +  \frac{1}{2\theta}\log t \bigg) 1_{\{ X_{u}(s) \in \Gamma_{s,t}^{R}\}}  \dif s \bigg\} \bigg).
  \end{aligned}
  \end{equation*}
Additionally, by Lemma \ref{thm-Laplace-BBM-order} (ii),
uniformly in    $s  \in t-[\frac{1}{R}\sqrt{t}, R\sqrt{t}]$ and  in $X_{u}(s) \in \Gamma^{R}_{s,t}$,
 the function
\begin{equation*}
 \begin{aligned}
 &  F\left(t-s, \delta(X_{u}(s);s,t)+ \frac{1}{2 \theta}\log t \right) \\
  & \sim \gamma_{\theta}(\varphi)\frac{e^{-(\frac{\theta^2}{2}-1)(t-s)}}{\sqrt{t-s}}
     e^{\theta[X_{u}(s)- v s+(\theta-v)(t-s) +\frac{1}{2 \theta} \log t ]} e^{- \frac{[ X_{u}(s)- v s+(\theta-v)(t-s) +\frac{1}{2 \theta} \log t]^2}{2(t-s)}  }\\
   &\sim \gamma_{\theta}(\varphi)
 \sqrt{\frac{t}{t-s}}
   e^{\theta[X_{u}(s)- v s]} e^{- \frac{[  X_{u}(s)- v s+(\theta-v)(t-s)]^2}{2(t-s)}  } , \ \text { as } t \to \infty
 \end{aligned}
\end{equation*}
where in the last ``$\sim$''
 we used the fact that $\theta^2-\theta v-(\frac{\theta^2}{2}-1)=\frac{\beta}{\sigma^2}-2 \beta+1=0$.
Therefore, by
making a change of variable $r=t-s$, we have $\mathbb{E}\left(e^{-\left\langle\widehat{\mathcal{E}}_t^R, \varphi\right\rangle}\right)$
\begin{align}
&
=\mathbb{E}\bigg(\exp \bigg\{ - (1+o(1)) \alpha\gamma_{\theta}(\varphi)   \int_{\frac{1}{R} \sqrt{t}}^{R\sqrt{t}}  \sqrt{\frac{t}{r}}\sum_{\substack{u \in N_s^1 \\  X_{u}(t-r) \in \Gamma_{t-r,t}^{R}}  }
  e^{\theta[X_{u}(t-r)-v(t-r)]} e^{- \frac{[  X_{u}(t-r)-v(t-r)+(\theta-v)r]^2}{2r}  }    \dif r  \bigg\}    \bigg) \notag\\
&=  \mathbb{E}\bigg(\exp \bigg\{ - (1+o(1))\alpha\gamma_{\theta}(\varphi)
\int_{\frac{1}{R}}^{R}
  \frac{t}{\sqrt{r_{\lambda,t}}}
  \mathcal{W}^{G} \left( t-r_{\lambda,t}, r_{\lambda,t} \right) \dif \lambda \bigg\} \bigg),  \label{eq-exp-int-2}
\end{align}
where in
\eqref{eq-exp-int-2} we substitute $r$ by $r_{\lambda,t}=\lambda \sqrt{t}$, and
\begin{equation*}
\mathcal{W}^{G} (t, r):= \sum_{u \in N^{1}_{t}}  e^{-\theta[vt-X_{u}(t)]} G\left( \frac{vt- X_{u}(t)- (\theta-v) r}{\sqrt{r}} \right)  \,, \ G(x)= e^{- \frac{x^2}{2}} 1_{\{|x| \leq R \}} .
\end{equation*}

Recall that  $(\{X_{u}(t)\}_{u \in N^{1}_{t}},\mathbb{P} )$ has the same distribution as $( \{\frac{\sigma}{\sqrt{\beta}} \mathsf{X}_{u}(\beta t)\}_{u \in \mathsf{N}_{\beta t}},\mathsf{P})$. Applying Proposition \ref{lem-functional-convergence-derivative-martingale},
we have, for fixed $\lambda > 0$ and $r_{\lambda,t}=\lambda\sqrt{t}$,
\begin{equation*}
  \lim_{t \to \infty}   \frac{t}{\sqrt{r_{\lambda,t}}}  \mathcal{W}^{G}(t-r_{\lambda,t},r_{\lambda,t})
    =  Z^{(1)}_{\infty}     \sqrt{\frac{2}{\pi}}
   \frac{(\theta - v) }{\sigma^3}\lambda \, e^{-\frac{(\theta - v)^2}{2 \sigma^2} \lambda^2} \int_{\mathbb{R}}  e^{-\frac{x^2}{2}} 1_{\{|x| \leq R\}}  \dif x
 \end{equation*}
in probability, and by using \eqref{conv-prop2} the integral of $\frac{t}{\sqrt{r_{\lambda,t}}}  \mathcal{W}^{G}(t-r_{\lambda,t},r_{\lambda,t})$  with respect to $\lambda$ over the interval $[1/R,R]$ converges in probability to the integral of right hand side.
By  \eqref{eq-exp-int-2} and  the dominated convergence theorem, we have
\begin{equation*}
  \lim\limits_{t \to \infty}  \mathbb{E}\left(e^{-\left\langle\widehat{\mathcal{E}}_t^R, \varphi\right\rangle}\right)
=  \mathbb{E}\bigg(\exp \bigg\{ - \alpha\gamma_{\theta}(\varphi) Z^{(1)}_{\infty}     \sqrt{\frac{2}{\pi}} \int_{\frac{1}{R}}^{R}  \frac{(\theta - v) \lambda }{\sigma^3} e^{-\frac{(\theta - v)^2}{2 \sigma^2} \lambda^2} \dif \lambda  \int_{\mathbb{R}}  e^{-\frac{x^2}{2}} 1_{\{|x| \leq R\}}  \dif x
  \bigg\} \bigg).
\end{equation*}
Letting $R\to\infty$, combining
 \eqref{eq-E-hat-t-1-3},  we have
\begin{equation*}
  \lim_{t \to \infty} \mathbb{E}\left(e^{-\left\langle\widehat{\mathcal{E}}_t, \varphi\right\rangle}\right)=
\lim_{R \to \infty}  \lim_{t \to \infty}  \mathbb{E}\left(e^{-\left\langle\widehat{\mathcal{E}}_t^R, \varphi\right\rangle}\right) =   \mathbb{E}\bigg(\exp \big\{ -  \gamma_{\theta}(\varphi) Z^{(I)}_{\infty}
 \frac{ 2 \alpha }{\sigma(\theta - v)}
\big\} \bigg) ,
\end{equation*}
which is the Laplace functional of $ \mathrm{DPPP}\left( \frac{ 2 \alpha  \sigma C(\theta)}{\sqrt{2 \pi}\beta (1-\sigma^2) } \theta Z^{(1)}_{\infty}  e^{- \theta x}\dif  x, \mathfrak{D}^{\theta}\right)$. Using \cite[Lemma 4.4]{BBCM22}, we complete the proof of Theorem \ref{thm-extremal-process-boundary-C1C3}.
\end{proof}

Before proving   Lemma \ref{lem-born-before-t-t1/2}, we show a weaker result first.

\begin{lemma}
For any $A > 0$ we have
\begin{equation*}
    \limsup _{t \rightarrow \infty} \mathbb{P}\left(\exists u \in N_{t}^{2}: T_{u} \leq t-  \sqrt{t} \log t , X_{u}(t) \geq m^{1,3}_{t}  -A \right)=0 .
  \end{equation*}
  \end{lemma}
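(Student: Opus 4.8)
The plan is to control the probability by a first moment, in the same spirit as the proof of Lemma~\ref{lem-born-after-t1/2logt}. Set $Y_t^A := \sum_{u\in N_t^2} 1_{\{T_u \le t - \sqrt t\log t,\ X_u(t) \ge m^{1,3}_t - A\}}$; by Markov's inequality the probability in the statement is at most $\mathbb{E}[Y_t^A]$, so it suffices to prove $\mathbb{E}[Y_t^A]\to 0$. Applying the many-to-one formula Lemma~\ref{many-to-one-1}(i) to the function of $u$ depending only on $X_u(t)$ and $T_u$, and noting that $\sigma B_s + (B_t-B_s)$ is a centred Gaussian of variance $\sigma^2 s + (t-s)$, the tail bound $\int_x^\infty e^{-y^2/2}\dif y \le x^{-1}e^{-x^2/2}$ gives, for $t$ large,
\begin{align*}
\mathbb{E}[Y_t^A] &= \alpha\int_0^{t-\sqrt t\log t} e^{\beta s + (t-s)}\,\mathbf{P}\big(\sigma B_s + (B_t-B_s) \ge m^{1,3}_t-A\big)\,\dif s \\
&\lesssim_{\alpha}\int_0^{t-\sqrt t\log t} e^{\beta s+(t-s)}\,\frac{\sqrt{\sigma^2 s+t-s}}{m^{1,3}_t-A}\,e^{-\frac{(m^{1,3}_t-A)^2}{2(\sigma^2 s+t-s)}}\,\dif s .
\end{align*}

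Next I would separate the polynomial from the exponential contributions. On $\mathscr{B}_{I,III}$ one has $\sigma^2<1$, so $\sigma^2 s + t-s = t-(1-\sigma^2)s \ge \sigma^2 t$ throughout the range of integration, whence $\sqrt{\sigma^2 s+t-s}/(m^{1,3}_t-A)\lesssim t^{-1/2}$. Writing $m^{1,3}_t-A = vt-w_t$ with $w_t = \frac1{2\theta}\log t + A = \Theta(\log t)$, the bound $(vt-w_t)^2 \ge v^2t^2-2vtw_t$ combined with the identity $v/\theta=\sigma^2$ (so $v w_t/\sigma^2 = \frac12\log t + O(1)$) shows that the logarithmic correction in the exponent contributes at most a factor $\lesssim_{A,\beta,\sigma^2} t^{1/2}$, which cancels the $t^{-1/2}$ above. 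Substituting $s=ut$ then gives
\begin{equation*}
\mathbb{E}[Y_t^A]\ \lesssim_{\alpha,A,\beta,\sigma^2}\ t\int_0^{1-\frac{\log t}{\sqrt t}} e^{t\psi(u)}\,\dif u,\qquad \psi(u):=1+(\beta-1)u-\frac{\beta\sigma^2}{1-(1-\sigma^2)u}.
\end{equation*}

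The heart of the matter is the analysis of $\psi$ on $[0,1]$. One computes $\psi(1)=\beta-\beta=0$, and, using the boundary relation $\frac1\beta+\frac1{\sigma^2}=2$ in the equivalent form $\sigma^2=\frac{\beta}{2\beta-1}$, $\frac{1-\sigma^2}{\sigma^2}=\frac{\beta-1}{\beta}$, also $\psi'(1)=(\beta-1)-\frac{\beta(1-\sigma^2)}{\sigma^2}=0$; so $u=1$ is a double zero of $\psi$. Since $\psi''(u)=-\frac{2\beta\sigma^2(1-\sigma^2)^2}{(1-(1-\sigma^2)u)^3}$ is strictly negative and, because $1-(1-\sigma^2)u\in[\sigma^2,1]$ there, bounded away from $0$ on $[0,1]$, Taylor's expansion around $u=1$ yields a constant $c=c(\beta,\sigma^2)>0$ with $\psi(u)\le-c(1-u)^2$ on $[0,1]$. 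Plugging this in and substituting $z=(1-u)\sqrt t$,
\begin{equation*}
\mathbb{E}[Y_t^A]\ \lesssim\ t\int_0^{1-\frac{\log t}{\sqrt t}} e^{-ct(1-u)^2}\,\dif u\ \le\ \sqrt t\int_{\log t}^{\infty} e^{-cz^2}\,\dif z\ \le\ \frac{\sqrt t}{2c\log t}\,e^{-c(\log t)^2}\ \overset{t\to\infty}{\longrightarrow}\ 0,
\end{equation*}
the cutoff $T_u\le t-\sqrt t\log t$ being precisely what forces $t(1-u)^2\ge(\log t)^2$ and hence super-polynomial decay.

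The main obstacle is the degeneracy at $u=1$: because both $\psi(1)=0$ and $\psi'(1)=0$ — which is exactly the boundary phenomenon under consideration — a first-order estimate of the rate function does not suffice, and one must exploit the uniform concavity of $\psi$ to extract the quadratic gain $\psi(u)\le-c(1-u)^2$, while at the same time tracking the logarithmic corrections accurately enough that the polynomial prefactors balance out. The remaining steps are routine bookkeeping.
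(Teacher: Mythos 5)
Your proof is correct and follows essentially the same route as the paper's: Markov's inequality plus the many-to-one formula, a Gaussian tail bound, and a Laplace-type analysis of the concave rate function whose maximum is degenerate at the endpoint, with the quadratic Taylor bound and the $\sqrt{t}\log t$ cutoff yielding super-polynomially small mass. The only differences are cosmetic: you parametrize by $u=s/t$ (maximum at $u=1$) rather than $\lambda=(t-s)/t$ (maximum at $\lambda=0$), and you track the logarithmic correction slightly more sharply (prefactor $t$ instead of the paper's $t^{3/2}$), which is immaterial since the integral decays like $e^{-c(\log t)^2}$.
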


   \begin{proof}
For $A>0$, we compute the mean of
$Y^A_{t}:= \sum_{u \in N^{2}_{t} } 1_{ \{T_{u} \leq t-  \sqrt{t} \log t , X_{u}(t) \geq m^{1,3}_{t} -A\} } $.
Applying Lemma \ref{many-to-one-1}(i) and Gaussian  tail bounds, we have
  \begin{equation*}
  \begin{aligned}
\mathbb{E}\left(Y^A_t\right)
  &=\alpha\int_{0}^{t-\sqrt{t}\log t} e^{\beta s+t-s} \mathbf{P}
  \left(\sigma B_s+\left(B_t-B_s\right) \geq m^{1,3}_{t} -A\right) \dif s \\
  & \lesssim \int_{0}^{t-\sqrt{t}\log t} e^{\beta s+(t-s)} \frac{  \sqrt{\sigma^2 s+t-s}}{m^{1,3}_{t}-A} e^{-\frac{(m^{1,3}_{t}-A)^2}{2\left(\sigma^2 s+t-s\right)}} \dif s \\
  & \lesssim_{A}  t^{-1/2}  \int_{\sqrt{t}\log t}^{t} e^{\beta (t-u)+u -  \frac{ v^2 t^2}{2\left(\sigma^2 (t-u)+u\right) }   }  e^{\frac{ v t \log t  }{\theta \left(\sigma^2 (t-u)+ u\right)}} \dif u.
  \end{aligned}
  \end{equation*}
 Set $\varphi: \lambda \mapsto \beta (1-\lambda) +\lambda-\frac{\beta \sigma^2}{ \sigma^2 + (1-\sigma^2) \lambda }$. Making a change of variable $u=t\lambda$, we get
  \begin{equation*}
  \mathbb{E}\left(Y^A_t\right)
  \lesssim_{A}   t^{ \frac{1}{2}+ \frac{v}{\theta \sigma^2} }  \int_{\frac{\log t}{\sqrt{t}}}^1   e^{t \varphi(\lambda)}   \dif \lambda.
  \end{equation*}
As  $\varphi^{\prime}(\lambda)=1-\beta + \beta \sigma^2 \frac{1- \sigma^2}{ \left(\sigma^2 +(1-\sigma^2) \lambda \right)^2} <0$ and $ \varphi^{\prime \prime}(u)=-  \beta \sigma^2  \frac{2\left(1-\sigma^2\right)^2}{\left(  \sigma^2 +(1-\sigma^2) \lambda  \right)^3} <0$,
   $\varphi$ is concave,
  and takes  maximum $\varphi(0)=0$ at point $\lambda=0$.
By Taylor's expansion,
    there is a constant $\delta>0$ (depending only on $\sigma^2$) such that $\varphi(\lambda) \leq-\delta \lambda^2$ for all $\lambda \in[0,1]$. Hence
  \begin{equation*}
 \mathbb{E}\left(Y^A_t\right)
  \lesssim      t^{ \frac{1}{2}+ \frac{v}{\theta \sigma^2} }   \int_{\frac{\log t}{\sqrt{t}}}^1   e^{-\delta \lambda^2 t }   \dif \lambda   \leq
  t^{\frac{v}{\theta \sigma^2} }
    \int_{ \log t }^{\infty}   e^{-\delta z^2  }   \dif z  \overset{t \to \infty }{\longrightarrow} 0,
  \end{equation*}
  which gives the desired result.
   \end{proof}

\begin{proof}
[Proof of Lemma \ref{lem-born-before-t-t1/2}]
\textbf{Step 1.}
We first prove \eqref{eq-born-before-t-t1/2}.
As shown in the proof of Lemma \ref{lem-born-after-t1/2}, it suffices to bound the mean of
\begin{equation*}
 Y_{t}(A,K,R) := \sum_{u \in N^{2}_{t} } 1_{ \{ t-T_{u} \in I^{R}_{t}\}}1_{\{ X_{u}(r) \leq vr + K, \forall r \leq T_{u} \} }  1_{\{X_{u}(t) \geq m^{1,3}_{t} -A \}},
\end{equation*}
where  $A, R, K \geq 0$ and $I^{R}_{t}: =   [0, \frac{1}{R}\sqrt{t}) \cup ( R\sqrt{t}, \sqrt{t} \log t]$.
Applying Lemma \ref{many-to-one-1}(i), we have
  \begin{align}
   &\E[  Y_{t}(A,K,R)] = \alpha \int_{t-I^{R}_{t}} e^{\beta s+ (t-s)} \mathbf{P} \bigg(
\begin{array}{l}
      \sigma B_{s}  + B_{t}-B_{s} \geq m^{1,3}_{t} -A,\\
     \sigma B_{r} \leq vr +K , \forall r \leq s
    \end{array}
  \bigg) \dif s \notag \\
  &\lesssim \int_{t-I^{R}_{t}} \int_{-A}^{\infty} e^{\beta s+ (t-s)} \mathbf{P}\bigg(
    \begin{array}{r}
 \sigma B_{r} \leq vr +K ,\\
  \forall r \leq s
    \end{array}
    \bigg |
\begin{array}{r}
    \sigma B_{s}  + B_{t}-B_{s} \\
    = m^{1,3}_{t} + x
\end{array}
 \bigg) e^{-\frac{(m^{1,3}_{t}+x)^2}{2(\sigma^2 s + t-s)}} \frac{\dif s \dif x}{\sqrt{t}} \notag \\
 &\lesssim \int_{-A}^{\infty}\dif x\int_{I^{R}_{t}}  \frac{ u+ \log t+|x|}{t} e^{\beta (t-u)+ u}    e^{-\frac{(m^{1,3}_{t}+x)^2}{2(\sigma^2 t +(1-\sigma^2)u)}} \frac{\dif u }{\sqrt{t}} , \label{eq-bound-E-Y-AKR}
  \end{align}
where in the third line we used Lemma \ref{lem-bridge-estimate} and we substitute  $t-s$ by $u$ in the integral.
Note that $
\frac{1}{2\sigma^2 t}(m^{1,3}_{t}+x)^2
 \geq   \frac{1 }{2 \sigma^2 t} [v^2 t^2-2 v t (\frac{1}{2 \theta}\log t -x)  ]= (\beta t - \frac{1}{2} \log t + \theta x )$. Then uniformly in $u \in [0,\sqrt{t}\log t]$,
we have    $ 1- \frac{ \sigma^2 t }{\sigma^2 t +(1-\sigma^2)u}=O(\frac{\log t}{\sqrt{t}})  $,  and hence
    \begin{align}
    &\beta (t-u)+ u -\frac{(m^{1,3}_{t}+x)^2}{2(\sigma^2 t +(1-\sigma^2)u)} =  \beta (t-u)+ u -\frac{(m^{1,3}_{t}+x)^2}{2\sigma^2 t} \frac{\sigma^2 t}{\sigma^2 t +(1-\sigma^2)u} \notag \\
    &\leq   - (\beta-1)u   + \frac{1}{2} \log t+ \beta t  \frac{ (1-\sigma^2)u }{\sigma^2 t +(1-\sigma^2)u} - [\theta-o(1)] x + o(1)  \label{eq-estimate-in-exp-1}.
    \end{align}
Substituting \eqref{eq-estimate-in-exp-1} into \eqref{eq-bound-E-Y-AKR}, we have
\begin{equation*}
  \begin{aligned}
 \E[  Y_{t}(A,K,R)]  &\lesssim  \int_{-A}^{\infty}\dif x \, e^{-\theta x / 2 }
  \int_{I^{R}_{t}} \frac{ u+ \log t+|x|}{t}    e^{   \frac{ (1-\sigma^2)t }{\sigma^2 t +(1-\sigma^2)u} \beta u - (\beta - 1)u }   \dif u  \\
  &=  \int_{-A}^{\infty}\dif x \, e^{-\theta x / 2 }
  \int_{I^{R}_{t}} \frac{ u+ \log t+|x|}{t}    e^{  - \frac{ (\beta-1)(1-\sigma^2 )u^2 }{\sigma^2 t +(1-\sigma^2)u} }   \dif u,
  \end{aligned}
\end{equation*}
 where in the equality we used the fact
  $\frac{(1-\sigma^2)t }{\sigma^2 t +(1-\sigma^2)u} \beta u - (\beta - 1)u =  \frac{ - (\beta-1)(1-\sigma^2)u^2  }{\sigma^2 t +(1-\sigma^2)u } $,
which  follows from  the assumption  $\frac{1}{\sigma^2}+ \frac{1}{\beta}=2$.
 Making a change of variable $u=\lambda \sqrt{t}$, we get
  \begin{align*}
 \E[  Y_{t}(A,K,R)]
  &\lesssim  \int_{-A}^{\infty}\dif x \, e^{-\theta x / 2 }
  \int_{[0,\frac{1}{R})\cup(R,\infty)} \frac{ \lambda \sqrt{t}+ \log t+|x|}{\sqrt{t}}    e^{  - \frac{ (\beta-1)(1-\sigma^2 ) \lambda^2}{\sigma^2 + (1-\sigma^2)/\sqrt{t} } } \dif \lambda.
  \end{align*}
  By the dominated convergence theorem,  we get $\lim\limits_{R \to \infty} \limsup\limits_{t \to \infty} \E[  Y_{t}(A,K,R)] = 0$ as desired.

   \textbf{Setp 2.} Now we prove \eqref {eq-born-position} by showing    that for fixed $A, K, \epsilon > 0$, the expectation of
    \begin{equation*}
Y_{t}(A,K,\epsilon, R):= \sum_{u \in N^{2}_{t} } 1_{ \{ t-T_{u} \in [\epsilon \sqrt{t}, \epsilon^{-1}\sqrt{t}] ,  X_{u}(r) \leq vr + K, \forall r \leq T_{u} \} } 1_{\{ X_{u}(T_{u}) \notin  \Gamma_{T_{u},t}^{R} \}}   1_{\{X_{u}(t) \geq m^{1,3}_{t} -A \}}
  \end{equation*}
 vanishes as first $t \to \infty$ then $R \to \infty$.
Thanks to  Lemma \ref{many-to-one-1}(i),
\begin{equation}\label{eq-E-YtAKeR-1}
    \E[  Y_{t}(A,K,\epsilon, R)] = \alpha \int_{t-\frac{1}{\epsilon}\sqrt{t}}^{t-\epsilon\sqrt{t}} e^{\beta s+ (t-s)} \mathbf{P} \bigg(
    \begin{array}{l}
      \sigma B_{s}  + B_{t}-B_{s} \geq m^{1,3}_{t} -A, \\
      |\sigma B_{s}- vs - (\theta-v)(t-s)| > R\sqrt{t-s}, \\
     \sigma B_{r} \leq vr +K , \forall r \leq s
    \end{array}
  \bigg) \dif s.
\end{equation}
 Then conditioned on $\sigma B_{s}= vs -(\theta-v)(t-s)+ x$,
by the Markov property,
 and noting that   $m^{1,3}_{t}- [vs -(\theta-v)(t-s)+ x]= \theta(t-s)-x-\frac{1}{2 \theta}\log t$,
\begin{align}
 & \mathbf{P} \bigg(
    \begin{array}{l}
      \sigma B_{s}  + B_{t}-B_{s} \geq m^{1,3}_{t} -A, \\
      |\sigma B_{s}- vs - (\theta-v)(t-s)| > R\sqrt{t-s}, \\
     \sigma B_{r} \leq vr +K , \forall r \leq s
    \end{array}
  \bigg) \notag \\
  &\lesssim  \int\limits_{\substack{ |x|>R\sqrt{t-s} \\ x \leq (\theta-v) (t-s)+K } } \frac{\dif x}{\sqrt{ s}} e^{-\frac{(vs-(\theta-v)(t-s)+x)^2}{2 \sigma^2 s}} \mathbf{P}(B_{t-s}> \theta(t-s)-x-\frac{1}{2\theta}\log t - A) \notag \\
  &\qquad \qquad  \qquad  \qquad \quad
  \times \mathbf{P}( \sigma B_{u} \leq v u + K, \forall u \leq s  |\sigma B_{s}= vs- (\theta-v)(t-s) + x )   \notag \\
  & \lesssim   \frac{e^{  -\beta s + (\frac{\theta^2}{2}- \theta v) (t-s) } }{\sqrt{t-s}}
   \int_{|x|>R \sqrt{t-s}} \frac{t-s+|x|}{t}   e^{ - \frac{ ((\theta-v) (t-s) - x )^2 }{2 \sigma^2 s } }   e^{ -\frac{(x+ \frac{\log t}{2 \theta}+A)^2}{2 (t-s)}  }   \dif x  \label{eq-E-YtAKeR-2},
\end{align}
where the  last inequality follows from the following three items:
\begin{itemize}
  \item $\exp\left\{-\frac{(vs-(\theta-v)(t-s)+x)^2}{2 \sigma^2 s}\right\}= e^{  -\beta s +  (\theta^2- \theta v) (t-s) } e^{-\theta x}  \exp \left\{ - \frac{ ((\theta-v) (t-s) - x )^2 }{2 \sigma^2 s } \right\}.$
  \item Gaussian tail bounds yield that
    for $s \in [t-\epsilon^{-1}\sqrt{t}, t - \epsilon \sqrt{t}]$ and $x \leq  (\theta- v)(t-s)+ K$,
  \begin{equation*}
   \mathbf{P}(B_{t-s}> \theta(t-s)-x-\frac{\log t}{2\theta}  - A)  \lesssim_{A}    \frac{\sqrt{t}}{\sqrt{t-s}} e^{ - \frac{\theta^2}{2}(t-s)} e^{ \theta x } \exp \bigg\{   -\frac{(x+ \frac{\log t}{2 \theta}+A)^2}{2 (t-s)}  \bigg\}.
  \end{equation*}
  \item
  Lemma \ref{lem-bridge-estimate-0} implies that
  for $s \in [t-\epsilon^{-1}\sqrt{t}, t - \epsilon \sqrt{t}]$ and $x \leq  (\theta- v)(t-s)+ K$,
  \begin{equation*}
   \mathbf{P}( \sigma B_{u} \leq v u + K, \forall u \leq s  |\sigma B_{s}= vs- (\theta-v)(t-s) + x )  \lesssim (t-s + |x|)/ t .
  \end{equation*}
\end{itemize}
Substituting \eqref{eq-E-YtAKeR-2} into \eqref{eq-E-YtAKeR-1},
noting that
 $\frac{\theta^2}{2}-\theta v +1=  \frac{\beta}{\sigma^2}  - 2 \beta + 1 = 0$    and making change of variables $ t-s=\lambda \sqrt{t}$, $y=x/\sqrt{t-s}$,  we get
\begin{align*}
  &\E[  Y_{t}(A,K,\epsilon, R)]
  \lesssim  \int_{t-\epsilon^{-1} \sqrt{t}} ^{t-\epsilon \sqrt{t}} \dif s    \int_{|x|>R \sqrt{t-s}}  \frac{t-s+|x|}{t }   e^{ - \frac{ ((\theta-v) (t-s) - x )^2 }{2 \sigma^2 s } }    e^{  -\frac{(x+ \frac{\log t}{2 \theta}+A)^2}{2 (t-s)}  }   \frac{\dif x }{\sqrt{t-s}} \\
  &=  \int_{\epsilon  } ^{\epsilon^{-1} }   \dif \lambda   \int_{|y|>R  } \frac{\lambda \sqrt{t}+t^{1/4} \lambda^{1/2}|y| }{\sqrt{t}  }    e^{ - \frac{ [(\theta-v)  \lambda - y \lambda^{1/2} t^{-1/4} ]^2 }{2 \sigma^2 (1-\lambda t^{-1/2}) } }   e^{  -\frac{1}{2  }\left(y + \frac{\log t /(2 \theta) + A}{  \lambda^{1/2} t^{1/4}}\right)^2  }    \dif y.
\end{align*}
Letting  $t \to \infty$, we have
$ \limsup\limits_{ t \to \infty} \E[  Y_{t}(A,K,\epsilon,R)] \lesssim \int_{\epsilon  } ^{\epsilon^{-1} } \lambda e^{ - \frac{ (\theta-v)^2  \lambda^2   }{2 \sigma^2   } }\dif \lambda   \int_{|y|>R  }       e^{  -\frac{1}{2  } y^2  }    \dif y $
by the dominated convergence theorem.   Then  letting  $R \to \infty$, the desired result follows.
\end{proof}

\section{Boundary between $\mathscr{C}_{I}$ and $\mathscr{C}_{II}$}
In this section we always assume that $(\beta,\sigma^2) \in \mathscr{B}_{I,II}$, i.e., $\beta<1$ and $\beta \sigma^2=1$. Recall that  $m^{1,2}_{t}=m_{t}=\sqrt{2}t-\frac{3}{2\sqrt{2}}\log t$.
As we have remarked, the driven mechanism of the asymptotic behavior of the extremal particles in this setting is the same as in the case $(\beta,\sigma^2) \in \mathscr{C}_{II}$.
The outline of the proof is very similar to that of \cite[Theorem 1.2]{BM21}, but   more careful estimations are needed.

Firstly we show that each particle $u \in N^{2}_{t}  $ above level $m^{1,2}_{t}$ at time $t$ should satisfy   $T_{u} =   O(1)$.

\begin{lemma}\label{lem-Tu-less-than-R}
 For any $A>0$, we have
  \begin{equation*}
 \lim _{R \rightarrow \infty} \limsup _{t \rightarrow \infty} \mathbb{P}\left(\exists u \in N_{t}^{2}: T_{u} \geq R , X_{u}(t) \geq m^{1,2}_{t}  -A \right)=0 .
  \end{equation*}
\end{lemma}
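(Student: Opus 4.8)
The plan is to follow the scheme of the proofs of Lemmas~\ref{lem-born-after-t1/2} and \ref{lem-born-before-t-t1/2}: reduce to a first moment estimate, evaluate it via the many-to-one formula, and bound the resulting integral; the new feature here is that on $\mathscr{B}_{I,II}$ the crude envelope \eqref{eq-upper-envelope-0} is insufficient and the refined envelope \eqref{eq-upper-envelope} must be used. Throughout, $v=\sqrt2$, $\theta=\sqrt2\beta<\sqrt2$ and $\sigma^2=1/\beta>1$; write $a_r=\frac{3}{2\theta}\log(r+1)$ and $c:=\frac{3}{2\theta}-\frac{3}{2\sqrt2}>0$. First I would invoke \eqref{eq-upper-envelope}: off an event of probability at most $C(K+1)e^{-\theta K}$, every $u\in N_s^1$ satisfies $X_u(s)\le\sqrt2 s-a_t+a_{t-s}+K$ for all $s\le t$. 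Writing $M_t^u$ for the largest position at time $t$ among the descendants of $u$, and setting
\begin{equation*}
\bar Y_t(A,R,K):=\sum_{u\in\mathcal{B}}1_{\{T_u\ge R\}}\,1_{\{X_u(r)\le\sqrt2 r-a_t+a_{t-r}+K,\ \forall r\le T_u\}}\,1_{\{M_t^u\ge m_t^{1,2}-A\}},
\end{equation*}
Markov's inequality reduces the statement to showing that for every fixed $K$, $\lim_{R\to\infty}\limsup_{t\to\infty}\mathbb{E}[\bar Y_t(A,R,K)]=0$. By the branching property and Lemma~\ref{many-to-one-1},
\begin{equation*}
\mathbb{E}[\bar Y_t(A,R,K)]=\alpha\int_R^t e^{\beta s}\,\mathbf{E}\!\left[F_t(t-s,\sigma B_s)\,1_{\{\sigma B_r\le\sqrt2 r-a_t+a_{t-r}+K,\ \forall r\le s\}}\right]\dif s,
\end{equation*}
where $F_t(r,y):=\mathsf{P}(\mathsf{M}_r\ge m_t^{1,2}-A-y)$.

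The core of the argument is estimating the integrand. For $s\in[t-t_0,t]$ (with $t_0$ the constant of Lemma~\ref{lem-Max-BBM-tail}) the event forces the maximum of a BBM run for bounded time to exceed a level of order $c\log t\to\infty$, which is super-polynomially small; keeping the Gaussian density of $\sigma B_s$ to cancel $e^{\beta s}$ (here $\beta\sigma^2=1$ enters) makes this contribution negligible. For $s\in[R,t-t_0]$, I parametrize $\sigma B_s=\sqrt2 s-(a_t-a_{t-s})-z$ with $z\ge-K$ and put $w=(a_t-a_{t-s})+z$. Using $1/\sigma^2=\beta$ and $\sqrt2/\sigma^2=\theta$, the density of $\sigma B_s$ is $\lesssim s^{-1/2}e^{-\beta s}\bigl(\tfrac{t+1}{t-s+1}\bigr)^{3/2}e^{\theta z}e^{-w^2/(2\sigma^2 s)}$, so $e^{\beta s}$ disappears. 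Since the envelope lies below the straight line $\sqrt2 r+K$, Lemma~\ref{lem-bridge-estimate-0} together with \eqref{eq-brownian-estimate} bounds the Brownian bridge factor by $\lesssim_{K}\min\bigl(1,((a_t-a_{t-s})+z+1)/s\bigr)$, gaining a power of $s$. Finally, because $\theta<\sqrt2$ gives $m_t^{1,2}$ a smaller logarithmic correction than $m_{t-s}$ has, one has $F_t(t-s,\sigma B_s)=\mathsf{P}(\mathsf{M}_{t-s}\ge m_{t-s}+x)$ with $x\ge c\log\tfrac{t+1}{t-s+1}+z-A'$ for a suitable constant $A'$, so \eqref{eq-Max-BBM-tail-2} gives $F_t(t-s,\sigma B_s)\lesssim_A\bigl(1+\log\tfrac{t+1}{t-s+1}+|z|\bigr)\bigl(\tfrac{t+1}{t-s+1}\bigr)^{-\sqrt2 c}e^{-\sqrt2 z}$, with $\bigl(\tfrac{t+1}{t-s+1}\bigr)^{-\sqrt2 c}\le1$.

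Multiplying the three estimates, the $z$-dependence is $e^{(\theta-\sqrt2)z}$ times polynomial factors, which is integrable over $z\ge-K$ precisely because $\theta<\sqrt2$; the $z$-integral is $\lesssim_{K,A}(1+\log\tfrac{t+1}{t-s+1})/s$, and the leftover powers of $\tfrac{t+1}{t-s+1}$ combine to the exponent $\tfrac32-\sqrt2 c=3-\tfrac{3\sqrt2}{2\theta}$, which is $<\tfrac32$ (again exactly because $\theta<\sqrt2$). Hence
\begin{equation*}
\mathbb{E}[\bar Y_t(A,R,K)]\lesssim_{K,A,\alpha,\beta}\int_R^{t-t_0}\frac{1}{s^{3/2}}\Bigl(\frac{t+1}{t-s+1}\Bigr)^{3-\frac{3\sqrt2}{2\theta}}\Bigl(1+\log\tfrac{t+1}{t-s+1}\Bigr)\dif s+o(1).
\end{equation*}
On $[R,t/2]$ the factor $\tfrac{t+1}{t-s+1}$ is bounded, so this piece is $\lesssim\int_R^\infty s^{-3/2}\log s\,\dif s\to0$ as $R\to\infty$ uniformly in $t$; on $[t/2,t-t_0]$ one has $s^{-3/2}\lesssim t^{-3/2}$ and, since $3-\tfrac{3\sqrt2}{2\theta}<\tfrac32$, the substitution $u=t-s$ shows this piece is $\lesssim t^{-\delta}(\log t)^{O(1)}$ for some $\delta>0$, hence $\to0$. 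This proves the lemma.

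The main obstacle, and the place where genuine care is needed, is the regime $s=T_u$ close to $t$. With the crude envelope \eqref{eq-upper-envelope-0} the first moment there actually diverges, because on $\mathscr{B}_{I,II}$ one has $\beta+\sigma^2=\beta+1/\beta>2$ --- in contrast to the boundary $\mathscr{B}_{II,III}$, where $\beta+\sigma^2=2$ makes a crude Gaussian computation work (cf.\ Lemma~\ref{lem-born-after-t1/2logt}). One therefore has to work with the refined envelope \eqref{eq-upper-envelope} and track precisely how its logarithmic drop $\tfrac{3}{2\theta}\log\tfrac{t+1}{t-s+1}$, the exact cancellation $e^{\beta s}e^{-\beta s}=1$ (which uses $\beta\sigma^2=1$), and the strict inequality $\theta<\sqrt2$ together yield the decaying factor $\bigl(\tfrac{t+1}{t-s+1}\bigr)^{3-3\sqrt2/(2\theta)}$ with exponent below the $s^{-3/2}$ gain of the ballot estimate, while the same inequality $\theta<\sqrt2$ makes the $z$-integral converge. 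A secondary technical point is the Brownian bridge bound for the slightly curved envelope boundary, which reduces to Lemma~\ref{lem-bridge-estimate-0} since the envelope lies below the line $\sqrt2 r+K$ (in the spirit of Lemma~\ref{lem-bridge-estimate}).
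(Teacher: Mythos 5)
Your proof is correct, but it is organized differently from the paper's. The paper splits the time axis at $t-\varepsilon(t)$ with $\varepsilon(t)=(\log t)^3$: on $[R,t-\varepsilon(t)]$ it keeps only the \emph{crude} envelope $X_u(r)\le\sqrt2 r+\sigma K$ from \eqref{eq-upper-envelope-0} and combines the sharp tail \eqref{eq-Max-BBM-tail-2} with the ballot bound \eqref{eq-brownian-estimate}, accepting the full factor $(t/(t-s))^{3/2}$ and paying for it only through the cutoff $t-s\ge(\log t)^3$; on the final window $[t-\varepsilon(t),t]$ it switches to the refined envelope \eqref{eq-upper-envelope}, bounds $F(r,x)\le e^{\sqrt2 x}$ by a crude Markov/first-moment estimate, and controls the curved-barrier expectation via inequality (6.3) of \cite{BM21} (a Bessel-3 change of measure). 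You instead run a single computation with the refined envelope on all of $[R,t-t_0]$, keeping the sharp tail of Lemma \ref{lem-Max-BBM-tail} and only the straight-line ballot estimate of Lemma \ref{lem-bridge-estimate-0}; the key cancellation you exploit is that the envelope tilt $e^{\theta(a_t-a_{t-s})}=\bigl(\tfrac{t+1}{t-s+1}\bigr)^{3/2}$ is beaten down by the factor $\bigl(\tfrac{t+1}{t-s+1}\bigr)^{-\sqrt2 c}$ coming from the mismatch of logarithmic corrections, leaving the exponent $3-\tfrac{3}{2\beta}<\tfrac32$ (your arithmetic here is right, since $\tfrac{3\sqrt2}{2\theta}=\tfrac{3}{2\beta}$), which together with the $s^{-3/2}$ ballot gain makes the integral convergent up to $s=t-O(1)$. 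What your route buys is a unified estimate that avoids both the intermediate cutoff at $t-(\log t)^3$ and the imported curved-barrier inequality from \cite{BM21}; what it costs is having to carry the endpoint parameter $z$ and the envelope tilt through the whole computation, whereas the paper's two-regime split lets each regime use only elementary ingredients.

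Two small caveats, neither of which affects correctness. First, your motivating claim that with the crude envelope ``the first moment there actually diverges'' overstates the situation: what diverges near $s=t$ is the bound obtained by applying the tail estimate $x e^{-\sqrt2 x}$ outside its range of validity (excess $x\le1$), not the first moment itself, and indeed the paper uses the crude envelope successfully on all of $[R,t-(\log t)^3]$; some path constraint is certainly needed near $s=t$ (without any envelope the expected number is of order $t^{3\beta/2-1/2}$, which diverges for $\beta>1/3$), but the refined envelope is a choice, not a necessity, for your argument to close. Second, minor bookkeeping: when the excess over $m_{t-s}$ is at most $1$ you should bound $F$ by $1$ (absorbed into the same form of estimate up to constants, since $z\ge-K$), and the $z$-integral actually produces $(1+\log\tfrac{t+1}{t-s+1})^2$ rather than the first power — harmless, as your final bound tolerates any fixed power of the logarithm.
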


We postpone the proof of Lemma \ref{lem-Tu-less-than-R} in the end of this section.
Lemma \ref{lem-Tu-less-than-R} implies that
we can approximate $\widehat{\mathcal{E}}_{t}:=\sum_{u \in N_t^2}   \delta_{  X_u(t)-m^{1,2}_{t} }  $  by  $
\widehat{\mathcal{E}}_t^R:=\sum_{u \in N_t^2} 1_{\{T_{u} \leq R\}} \delta_{X_u(t)-m^{1,2}_{t}}$. Indeed  we have
  \begin{equation*}
  \lim _{R \rightarrow \infty} \limsup _{t \rightarrow \infty}
  \left| \mathbb{E} \left(e^{-\left\langle\widehat{\mathcal{E}}_t^R, \varphi\right\rangle}\right)-\mathbb{E}\left(e^{-\left\langle \widehat{\mathcal{E}_{t}}, \varphi\right\rangle}\right)\right|=0.
  \end{equation*}
By the results in \cite{ABBS13,ABK13},
a simple computation  gives the convergence of $\widehat{\mathcal{E}}_t^R$
 as $t$  to $\infty$.
 We only restate the results here,  see \cite[Lemma 5.2]{BM21} for a proof.

 \begin{lemma}
 For any $\varphi \in \mathcal{T}$,
   we have $\lim_{t \rightarrow \infty} \langle\widehat{\mathcal{E}}_t^R, \varphi \rangle= \langle\widehat{\mathcal{E}}_{\infty}^R, \varphi \rangle$ in law, where
  \begin{equation*}
    \widehat{\mathcal{E}}_{\infty}^R=\mathrm{DPPP}\left( C_{\star} \bar{Z}_R \sqrt{2} e^{-\sqrt{2} x} \dif x,\mathfrak{D}^{\sqrt{2}} \right).
  \end{equation*}
 \end{lemma}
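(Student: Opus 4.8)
The plan is to reproduce the argument of \cite[Lemma 5.2]{BM21}. Write $\mathcal{F}_R$ for the $\sigma$-field generated by the two-type process up to time $R$, and set $\mathcal{B}_R := \{u \in \mathcal{B} : T_u \le R\}$, the set of type $2$ particles born directly from a type $1$ particle before time $R$; since the type $1$ particles form a Yule process that emits type $2$ particles at rate $\alpha$, $\mathcal{B}_R$ is a.s.\ finite and $\mathcal{F}_R$-measurable. As type $2$ particles beget only type $2$ offspring, each $u' \in N_t^2$ with $T_{u'} \le R$ descends from a unique $u \in \mathcal{B}_R$, whence
\[
  \widehat{\mathcal{E}}_t^R = \sum_{u \in \mathcal{B}_R}\ \sum_{\substack{u' \in N_t^2\\ u' \succcurlyeq u}} \delta_{X_{u'}(t) - m^{1,2}_t}.
\]
First I would apply the branching property: conditionally on $\mathcal{F}_R$ the subtrees rooted at the finitely many particles of $\mathcal{B}_R$ are independent standard BBMs, the one rooted at $u$ being started at $X_u(T_u)$ and run for time $t - T_u$. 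Hence, for $\varphi \in \mathcal{T}$,
\[
  \mathbb{E}\big[ e^{-\langle \widehat{\mathcal{E}}_t^R, \varphi\rangle} \,\big|\, \mathcal{F}_R\big]
  = \prod_{u \in \mathcal{B}_R} \mathsf{E}\Big[ \exp\Big( -\!\!\sum_{w \in \mathsf{N}_{t - T_u}} \varphi\big( \mathsf{X}_w(t - T_u) + X_u(T_u) - m^{1,2}_t \big)\Big) \Big].
\]

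Next I would insert the centering. For fixed $s \le R$ one has $m_{t-s} - m^{1,2}_t = -\sqrt{2}\,s + \frac{3}{2\sqrt{2}}\log\frac{t}{t-s} \to -\sqrt{2}\,s$, so the argument of $\varphi$ above equals $\mathsf{X}_w(t-T_u) - m_{t-T_u} + \big(X_u(T_u) - \sqrt{2}\,T_u\big) + o(1)$ with the $o(1)$ deterministic. Since $t - T_u \to \infty$ and the shift $X_u(T_u) - \sqrt{2}\,T_u$ is bounded and $\mathcal{F}_R$-measurable, the convergence \eqref{eq-converge-to-DPPP} of Arguin, Bovier, Kistler \cite{ABK13} and A\"{i}d\'{e}kon, Berestycki, Brunet, Shi \cite{ABBS13} shows that each factor converges, as $t \to \infty$, to the Laplace functional at $\varphi$ of $\theta_{X_u(T_u) - \sqrt{2}\,T_u}\,\mathrm{DPPP}\big(\sqrt{2}\,C_\star \mathsf{Z}^{(u)}_\infty e^{-\sqrt{2}x}\dif x, \mathfrak{D}^{\sqrt{2}}\big)$, where $\theta_a$ denotes translation by $a$ and the $\mathsf{Z}^{(u)}_\infty$ are mutually independent copies of the derivative martingale limit, independent of $\mathcal{F}_R$. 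Multiplying these finitely many limits, the product is the Laplace functional of the superposition of these independent shifted DPPPs. Using that a shift by $a$ sends $\mathrm{DPPP}(c\,e^{-\sqrt{2}x}\dif x,\mathfrak{D}^{\sqrt{2}})$ to $\mathrm{DPPP}(c\,e^{\sqrt{2}a}e^{-\sqrt{2}x}\dif x,\mathfrak{D}^{\sqrt{2}})$, and that a superposition of independent DPPPs with common decoration $\mathfrak{D}^{\sqrt{2}}$ and intensities proportional to $e^{-\sqrt{2}x}\dif x$ is again such a DPPP with summed intensity, this superposition equals $\mathrm{DPPP}\big(\sqrt{2}\,C_\star \bar Z_R e^{-\sqrt{2}x}\dif x, \mathfrak{D}^{\sqrt{2}}\big)$ with $\bar Z_R := \sum_{u \in \mathcal{B}_R} \mathsf{Z}^{(u)}_\infty e^{\sqrt{2}X_u(T_u) - 2 T_u} < \infty$ a.s.

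Finally, to remove the conditioning I would note that the left-hand side is bounded by $1$, so dominated convergence gives $\mathbb{E}[e^{-\langle \widehat{\mathcal{E}}_t^R, \varphi\rangle}] \to \mathbb{E}[e^{-\langle \widehat{\mathcal{E}}_\infty^R, \varphi\rangle}]$; applying this with $\kappa\varphi$ in place of $\varphi$ for every $\kappa > 0$ (still a member of $\mathcal{T}$) upgrades it to the asserted convergence in law of the real random variables $\langle \widehat{\mathcal{E}}_t^R, \varphi\rangle$. The two points needing genuine care are the identification of the shifted superposition as a single DPPP (the shift-and-sum algebra for exponential intensities with a common decoration) and the interchange of the conditional limit with the outer expectation; neither is difficult, which is why the remaining details may be left to \cite[Lemma 5.2]{BM21}.
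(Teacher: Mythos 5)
The paper itself does not prove this lemma---it defers to \cite[Lemma 5.2]{BM21}---and your reconstruction follows exactly that intended route: factorize the Laplace functional over the finitely many particles of $\mathcal{B}_R$, apply the convergence of \cite{ABK13,ABBS13} to each subtree, and use the shift-and-superposition algebra for DPPPs with exponential intensity and common decoration. Your identification $\bar Z_R=\sum_{u\in\mathcal{B}_R}e^{\sqrt{2}X_u(T_u)-2T_u}\mathsf{Z}^{(u)}_\infty$ is consistent with the paper's definition $\bar Z_R=\sum_{u}1_{\{T_u\le R\}}Z^{(u)}_\infty$, because the critical additive martingale of each subtree vanishes, so $Z^{(u)}_\infty=e^{\sqrt{2}X_u(T_u)-2T_u}\tilde Z^{(u)}_\infty$ with $\tilde Z^{(u)}_\infty$ the recentered derivative martingale limit.

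The one step that is wrong as written is the conditioning. With $\mathcal{F}_R$ the $\sigma$-field generated by the two-type process up to time $R$, the identity $\mathbb{E}\big[e^{-\langle\widehat{\mathcal{E}}_t^R,\varphi\rangle}\mid\mathcal{F}_R\big]=\prod_{u\in\mathcal{B}_R}\mathsf{E}\big[\exp\big(-\sum_{w\in\mathsf{N}_{t-T_u}}\varphi(\mathsf{X}_w(t-T_u)+X_u(T_u)-m^{1,2}_t)\big)\big]$ does not hold: the subtree rooted at $u\in\mathcal{B}_R$ is already revealed on $[T_u,R]$ inside $\mathcal{F}_R$, so conditionally on $\mathcal{F}_R$ the correct factorization runs over the type $2$ particles $v\in N^2_R$ with $T_v\le R$, each contributing a fresh BBM of length $t-R$ started from $X_v(R)$. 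Either work with that version (it leads to the same limit, since $Z^{(u)}_\infty$ decomposes over the time-$R$ descendants of $u$), or---closer to your formula---condition instead on the $\sigma$-field generated by the type $1$ process together with the birth times and positions of the particles of $\mathcal{B}$, i.e.\ apply the branching property along the stopping line $\{(T_u,X_u(T_u)):u\in\mathcal{B}_R\}$; with that conditioning your displayed product, and everything after it, is correct. This is a local repair rather than a missing idea; the points you gloss (absorbing the deterministic $o(1)$ shift $m_{t-T_u}-m^{1,2}_t+\sqrt{2}T_u$ into the test function, and upgrading convergence of $\mathbb{E}[e^{-\kappa\langle\widehat{\mathcal{E}}_t^R,\varphi\rangle}]$ for all $\kappa>0$ to convergence in law) are indeed routine.
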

Here $ \bar{Z}_R$ is defined as follows.
 Firstly,  for each $u \in \mathcal{B}$,  the convergence of derivative martingale for the standard BBM and the branching property implies the a.s. convergence of
  \begin{equation*}
Z_t^{(u)}:=\sum_{\substack{u^{\prime} \in N_t^2 \\ u^{\prime} \succcurlyeq u}}\left(\sqrt{2} t-X_{u^{\prime}}(t)\right) e^{\sqrt{2}\left(X_{u^{\prime}}(t)-\sqrt{2} t\right)}.
\end{equation*}
Let $Z_{\infty}^{(u)}:=\liminf\limits_{t \rightarrow \infty} Z_t^{(u)}$.   Then $Z_{\infty}^{(u)} \overset{\mathrm{law}}{=} \mathsf{Z}_{\infty} e^{\sqrt{2} X_{u}(T_{u}) - 2 T_{u} } >0$ a.s., where $\mathsf{Z}_{\infty}$ is independent of $\{X_{u}(t):u\in N_{t}\}_{t \geq 0}$. We set $\bar{Z}_R:=\sum_{u \in \mathcal{B}} 1_{\{T_{u} \leq R\}} Z_{\infty}^{(u)}$.
The main ingredient is to show that
$\widehat{\mathcal{E}}_{\infty}^R$ converges in law as $R \rightarrow \infty$, which is done as follows.

\begin{lemma}\label{lem-bar-Z-infinity}
   For all $\varphi \in \mathcal{T}$, we have $\lim _{R \rightarrow \infty}\langle\widehat{\mathcal{E}}_{\infty}^R, \varphi\rangle=\langle\widehat{\mathcal{E}}_{\infty}, \varphi\rangle$ in law, where
   \begin{equation*}
    \widehat{\mathcal{E}}_{\infty} =\mathrm{DPPP}\left( C_{\star} \bar{Z}_{\infty} \sqrt{2} e^{-\sqrt{2} x} \dif x,\mathfrak{D}^{\sqrt{2}} \right),
   \end{equation*}
    and  $\bar{Z}_{\infty}:=\lim\limits_{R \rightarrow \infty} \bar{Z}_R=\sum_{u \in \mathcal{B}} Z_{\infty}^{(u)} <\infty$ a.s..
\end{lemma}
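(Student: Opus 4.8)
The plan is to reduce the statement to the single fact that $\bar Z_\infty:=\sum_{u\in\mathcal B}Z_\infty^{(u)}$ is almost surely finite, and then read off the convergence in law from the Laplace functionals. Since each $Z_\infty^{(u)}\ge 0$, the quantity $\bar Z_R$ is nondecreasing in $R$, so $\bar Z_R\uparrow\bar Z_\infty:=\sum_{u\in\mathcal B}Z_\infty^{(u)}\in[0,\infty]$. Granting $\bar Z_\infty<\infty$ a.s., the convergence in law is soft: for $\varphi\in\mathcal T$, conditioning on $\bar Z_R$ and using the Laplace functional of a DPPP gives $\mathbb{E}\big(e^{-\langle\widehat{\mathcal E}_\infty^R,\varphi\rangle}\big)=\mathbb{E}\big(\exp\{-\bar Z_R\,\gamma_{\sqrt2}(\varphi)\}\big)$ with $\gamma_{\sqrt2}(\varphi)\in(0,\infty)$ the constant of Lemma~\ref{thm-Laplace-BBM-order}(i); letting $R\to\infty$, dominated convergence yields $\mathbb{E}\big(\exp\{-\bar Z_\infty\gamma_{\sqrt2}(\varphi)\}\big)$, which is precisely the Laplace functional of $\mathrm{DPPP}(C_\star\bar Z_\infty\sqrt2 e^{-\sqrt2x}\dif x,\mathfrak D^{\sqrt2})=\widehat{\mathcal E}_\infty$, and \cite[Lemma~4.4]{BBCM22} upgrades this to convergence in law.

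To prove $\bar Z_\infty<\infty$ a.s., I first record the factorisation $Z_\infty^{(u)}=e^{\sqrt2(X_u(T_u)-\sqrt2 T_u)}\,\widehat Z^{(u)}$ for $u\in\mathcal B$: by the branching property the type-$2$ descendants of $u$ form a standard BBM started from $(T_u,X_u(T_u))$, and since the critical additive martingale of that BBM tends to $0$ while its derivative martingale converges, this identity holds with $(\widehat Z^{(u)})_{u\in\mathcal B}$ i.i.d., distributed as $\mathsf Z_\infty$, and independent of the type-$1$ process. Write $c_u:=e^{\sqrt2(X_u(T_u)-\sqrt2 T_u)}$, so $\bar Z_\infty=\sum_{u\in\mathcal B}c_u\widehat Z^{(u)}$. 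Using the decay of $\mathsf{P}(\mathsf Z_\infty>y)$ (of order $1/y$, equivalently $\mathsf{E}[\log_+\mathsf Z_\infty]<\infty$) one checks $\mathsf{E}[(c\,\mathsf Z_\infty)\wedge1]\lesssim c\,(1+\log_+(1/c))$ for $c\le1$ and $\le1$ for all $c$. Hence, conditioning on the type-$1$ process (which determines the $c_u$) and invoking that a nonnegative series $\sum_u\xi_u$ is a.s.\ finite as soon as $\sum_u\mathsf{E}(\xi_u\wedge1)<\infty$, it suffices to show that almost surely
\[
\#\{u\in\mathcal B:\;c_u\ge1\}<\infty
\qquad\text{and}\qquad
\sum_{u\in\mathcal B:\,c_u<1}c_u\,\big(1+\log_+(1/c_u)\big)<\infty .
\]

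For the first bound, $c_u\ge1$ means the type-$1$ ancestor of $u$ lies at or above the line $r\mapsto\sqrt2\,r$ at time $r=T_u$; given the type-$1$ process this count is Poisson with parameter $\alpha\int_0^\infty N_\ge(s)\dif s$, where $N_\ge(s)=\#\{w\in N_s^1:X_w(s)\ge\sqrt2\,s\}$. Now $N_\ge(s)=0$ unless $\max_{w\in N_s^1}X_w(s)\ge\sqrt2\,s$, and the refined upper envelope \eqref{eq-upper-envelope} (together with the scaling between the type-$1$ and the standard BBM and Lemma~\ref{lem-Max-BBM-tail}) gives $\mathbb{P}(\exists s\in[n,n+1],\,w\in N_s^1:X_w(s)\ge\sqrt2\,s)\lesssim(\log n)\,n^{-3/2}$; Borel--Cantelli then forces $N_\ge(s)=0$ for all large $s$ a.s., so the integral is finite (as $s\mapsto|N_s^1|$ is a.s.\ bounded on compacts). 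For the second bound, restrict to $G_K=\{X_w(s)\le\sqrt2\,s+K\ \text{for all }s\ge0,\ w\in N_s^1\}$, which by \eqref{eq-upper-envelope-0} has probability tending to $1$ as $K\to\infty$; it then suffices to bound, via Lemma~\ref{many-to-one-1}(ii), the integral over $s\in(0,\infty)$ of $\alpha\,\mathbb{E}\big[\sum_{w\in N_s^1}e^{\sqrt2(X_w(s)-\sqrt2 s)}\big(1+(\sqrt2\,s-X_w(s))\big)\mathbf 1_{\{X_w(r)\le\sqrt2\,r+K\ \forall r\le s,\ X_w(s)<\sqrt2 s\}}\big]$. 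By the many-to-one formula for the type-$1$ BBM together with the Girsanov tilt by $e^{\sqrt2\sigma B_s}$, the inner expectation equals $e^{(\beta+\sigma^2-2)s}$ times the probability that a Brownian motion stays below the barrier $r\mapsto\sqrt2(1-\sigma^2)r+K$ up to time $s$, times a polynomially bounded factor coming from the weight; and this barrier has \emph{negative slope} precisely because $\sigma^2=1/\beta>1$ on $\mathscr B_{I,II}$. A reflection-principle computation gives that the non-crossing probability is $\asymp_K s^{-3/2}e^{-(\beta+\sigma^2-2)s}$, which exactly cancels the prefactor and leaves an integrable bound $O_K\!\big(s^{-3/2}\,\mathrm{polylog}(s)\big)$; integrating in $s$ yields the claim, hence $\bar Z_\infty<\infty$ a.s.

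The main obstacle is this last step. The \emph{unrestricted} first moment $\mathbb{E}\big[\sum_{w\in N_s^1}e^{\sqrt2(X_w(s)-\sqrt2 s)}\big]=e^{(\beta+\sigma^2-2)s}$ is exponentially large (since $\beta+\sigma^2>2$ on the boundary), so the argument must exploit the upper envelope to restrict to lineages below $\sqrt2\,r+K$ and then pin down the exact polynomial order of the Bramson-type non-crossing probability for the resulting \emph{negatively} sloped barrier — it is here that the condition $\sigma^2>1$ (i.e.\ $\beta<1$), characteristic of $\mathscr B_{I,II}$, is decisive and where the ``more careful estimations'' are concentrated. The Borel--Cantelli input for the count $\#\{u\in\mathcal B:c_u\ge1\}$ is comparatively routine but also genuinely beyond a first-moment estimate, since that count has infinite mean.
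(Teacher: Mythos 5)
Your proposal is correct, and its analytic core coincides with the paper's own proof: after truncating by the upper envelope \eqref{eq-upper-envelope-0} (available because $v=\sqrt2$ on $\mathscr{B}_{I,II}$), both arguments bound a first moment over $\mathcal{B}$ via Lemma \ref{many-to-one-1}(ii), a Girsanov tilt, and a barrier estimate, and both hinge on the same algebraic cancellation $(\sigma^2-1)^2/\sigma^2=\beta+\sigma^2-2$ valid exactly when $\beta\sigma^2=1$; your ``negative-slope barrier'' computation is, after one further exponential tilt, precisely the paper's bound, which keeps the weight $e^{\sqrt2(\sigma-\sqrt\beta)B_s}$ below a constant barrier and decomposes over the endpoint using \eqref{eq-brownian-estimate} (this endpoint decomposition is also what makes rigorous your slightly loose phrase about a ``polynomially bounded factor coming from the weight'', since the endpoint weight is unbounded and must be integrated against the barrier event). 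Where you genuinely differ is in the reduction and the bookkeeping: the paper reduces the lemma to the a.s.\ finiteness of $Y=\sum_{u\in\mathcal{B}}\bigl(1+(\sqrt2\,T_u-X_u(T_u))_+\bigr)e^{\sqrt2 X_u(T_u)-2T_u}$ by citing the proof of Lemma 5.3 in \cite{BM21}, and then a single truncated first moment $\E(Y_K)<\infty$ handles all birth points at once, the envelope capping $e^{\sqrt2(X_u(T_u)-\sqrt2 T_u)}\le e^{\sqrt2\sigma K}$ so that births above the line $\sqrt2 s$ need no separate treatment. You instead reprove that reduction from scratch (the factorization $Z_\infty^{(u)}=c_u\widehat Z^{(u)}$ using that the critical additive martingale vanishes, the $O(1/y)$ tail of $\mathsf{Z}_\infty$ giving $\mathsf{E}[(c\,\mathsf{Z}_\infty)\wedge1]\lesssim c(1+\log_+(1/c))$, and a conditional three-series/Borel--Cantelli argument given the type-$1$ process), and you handle the infinite-mean count of births above $\sqrt2 s$ by a separate Borel--Cantelli argument; your claimed bound $\P(\exists\, s\in[n,n+1],\,w\in N^1_s:\,X_w(s)\ge\sqrt2 s)\lesssim (\log n)\,n^{-3/2}$ indeed follows from \eqref{eq-upper-envelope} with $K\asymp\frac{3}{2\theta}\log n$. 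Both organizations are sound: yours is more self-contained (it does not lean on \cite{BM21} for the reduction) at the price of an extra envelope/Borel--Cantelli step, while the paper's weighted first moment absorbs both regimes in one estimate. I see no gap.
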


\begin{proof}
  As shown in the proof of  \cite[Lemma 5.3]{BM21}, it is sufficient  to prove that
  \begin{equation*}
    Y:= \sum_{u \in \mathcal{B}} \left(1+\left( \sqrt{2}T_{u}-  X_u(T_{u})\right)_{+}\right) e^{\sqrt{2} X_u(T_{u})-2 T_{u}}<\infty \quad
    \P\text{-a.s. }
    \end{equation*}

We claim that
it is enough   to show that for each $K \in \mathbb{N}$,
\begin{equation*}
  Y_{K}= \sum_{ \substack{ u \in \mathcal{B} \\ X_{u}(s) \leq \sqrt{2}s+ \sigma K ,   s \leq T_{u}}}
    (1+ [ \sqrt{2}T_{u}-  X_u(T_{u}) ]_{+} ) e^{\sqrt{2} X_u(T_{u})-2 T_{u}}<\infty \quad
     \P\text{-a.s. }
  \end{equation*}
In fact, as $\beta\sigma^2=1$, by \eqref{eq-upper-envelope-0} we have
$  \P\left( \cup_{K \in \mathbb{N}} \{  \forall \, t \geq 0, u \in N^{1}_{t}, X_{u}(t) \leq \sqrt{2} t + \sigma K \}   \right) = 1$.
Then for  almost every realization $\omega$, we can find $K=K(\omega) \in \mathbb{N}$ large enough so that
\begin{equation*}
 X_{u}(t) \leq \sqrt{2} t + \sigma K  \, , \ \forall t \geq 0, u \in N^{1}_{t} .
\end{equation*}
As a consequence we have $Y(\omega)= Y_{K(\omega)}(\omega)< \infty$ for almost all $\omega$.

Next we compute $\E(Y_K)$ for fixed $K \in \mathbb{N}$. Applying Lemma \ref{many-to-one-1}(ii), we have
\begin{equation*}
\E(Y_{K}) = \alpha \int_{0}^{\infty} e^{\beta s} \mathbf{E}\left(   1_{\{\sigma B_{u} \leq \sqrt{2}u+ \sigma K, u \leq s \}} [1+(\sqrt{2}s-\sigma B_{s})_+] e^{\sqrt{2}(\sigma B_{s}-\sqrt{2}s)} \right) \dif s .
\end{equation*}
By
Girsanov's theorem, we replace  $(\sigma B_{u}-\sqrt{2}u)_{u \leq s}$ by $(\sigma B_{u})_{u \leq s}$
 (noting $\beta \sigma^2=1$)
 and get
\begin{align*}
  \E(Y_{K}) & \leq  \alpha \int_{0}^{\infty}  \mathbf{E}\left(   1_{\{  B_{u}  \leq  K, u \leq s \}} (1+\sigma |B_{s}|) e^{\sqrt{2}(\sigma-\sqrt{\beta}) B_{s} } \right) \dif s \\
  &\leq   \alpha \int_{0}^{\infty}  \mathbf{E}\left(  \sum_{n \geq 0} 1_{\{  B_{u}  \leq  K, u \leq s; B_{s}-K \in [-n-1,-n] \}} (1+ \sigma |B_{s}|) e^{\sqrt{2}(\sigma-\sqrt{\beta}) B_{s} } \right) \dif s  .
\end{align*}
Thanks to  the Brownian estimate \eqref{eq-brownian-estimate} and
noticing that $\sigma=\frac{1}{\sqrt{\beta}}>\sqrt{\beta}$, we have
 \begin{equation*}
   \E(Y_{K})
   \lesssim  \alpha \int_{0}^{\infty} \frac{K}{(1+s)^{3/2}}  \sum_{n \geq 0} (1+K+n)^{2} e^{\sqrt{2}(\sigma-\sqrt{\beta}) (K-n)}    \dif s  < \infty,
 \end{equation*}
which implies $Y_{K}<\infty$, $\P$-a.s.
We complete the proof.
\end{proof}

Now it suffices to show Lemma \ref{lem-Tu-less-than-R}.

\begin{proof}[Proof of Lemma \ref{lem-Tu-less-than-R}]
The proof consists of two step. For convenience we write $\varepsilon(t)=(\log t)^3$.
Firstly we show that
 \begin{equation}\label{eq-Tu-less-t-polylogt}
   \limsup _{t \rightarrow \infty} \mathbb{P}\left(\exists u \in N_{t}^{2}: T_{u} \geq t- \varepsilon(t) , X_{u}(t) \geq m^{1,2}_{t}  -A \right)=0 ;
     \end{equation}
and secondly,  we prove
  \begin{equation}\label{eq-Tu-greater-R}
 \lim _{R \rightarrow \infty} \limsup _{t \rightarrow \infty} \mathbb{P}\left(\exists u \in N_{t}^{2}: T_{u} \in [ R  ,t- \varepsilon(t)] , X_{u}(t) \geq  m^{1,2}_{t}   -A \right)=0 .
  \end{equation}

{\bf Step 1} (Proof of \eqref{eq-Tu-less-t-polylogt}).
Recall that  $a_t=\frac{3}{2 \theta} \log (t+1)$, $t\geq 0$.
As shown in the proof of Lemma \ref{lem-born-after-t1/2}, thanks to the  inequality \eqref{eq-upper-envelope}, it suffices to show that for each $K \geq 1$,  the mean of
\begin{equation*}
  Y_{t}(K) := \sum_{u \in N^2_{t}} 1_{\{ X_{u}(t)\geq
  m^{1,2}_{t} -A\}}
  1_{\{T_{u}>t-\varepsilon(t)\}}1_{\{X_{u}(r) \leq \sqrt{2}r-a_{t}+a_{t-r}+K,\, r \leq T_{u}\}}
\end{equation*}
vanishes as $t \to \infty$.
Applying  Lemma \ref{many-to-one-1}(i) and the Markov property for Brownian motion,
\begin{align*}
  &\E[ Y_{t}(K) ]
  = \alpha \int_{t- \varepsilon(t)}^{t} e^{\beta s + t-s}  \mathbf{P}\left(\begin{array}{l}
   \sigma B_{s}+B_{t}-B_{s} \geq
    m^{1,2}_{t}-A \\
 \sigma B_{r} \leq  \sqrt{2}r-a_{t}+a_{t-r}+K, r \leq s
  \end{array}\right)
  \dif s \\
  &=  \alpha \int_{t- \varepsilon(t)}^{t} e^{\beta s }  \mathbf{E} \left(   F(t-s, \sigma B_{s}-\sqrt{2}s+ \frac{3}{2\sqrt{2}}\log t + A)   1_{\{ \sigma B_{r} \leq  \sqrt{2}r-a_{t}+a_{t-r}+K, r \leq s  \}}  \right)
   \dif s,
\end{align*}
where $F(r,x)=e^{r} \mathbf{P}(B_{r} \geq \sqrt{2}r-x )$. By Markov's inequality,   $F(r,x)\leq e^{\sqrt{2}x}$.
By Girsanov's theorem, we  replace $\sigma B_{s}-\sqrt{2}s$ by $\sigma B_{s}$, and then the integral above equals to
\begin{equation*}
  \int_{t- \varepsilon(t)}^{t} \mathbf{E} \left( e^{-\sqrt{2\beta}B_{s}}  F(t-s, \sigma B_{s} + \frac{3}{2\sqrt{2}}\log t + A)   1_{\{ \sigma B_{r} \leq  a_{t-r}-a_{t}+K, r \leq s  \}}  \right) \dif s .
\end{equation*}
Thus
 \begin{equation*}
\E[ Y_{t}(K) ] \lesssim_{A,\alpha}
t^{3/2} \int_{t- \varepsilon(t)}^{t} \mathbf{E}
\left( e^{\sqrt{2} (\sigma-\sqrt{\beta})B_{s} }
1_{\{ \sigma B_{r} \leq  a_{t-r}-a_{t}+K, r \leq s  \}}  \right)
\dif s .
 \end{equation*}
By  inequality (6.3) in \cite{BM21}
with $\lambda=\sqrt{2}-\theta$
(or making a change of measure to Bessel-3 process),  we have
\begin{equation*}
  \mathbf{E}\left( e^{\sqrt{2} (\sigma-\sqrt{\beta})B_{s} }
1_{\{ \sigma B_{r} \leq  a_{t-r}-a_{t}+K,\, r \leq s  \}}  \right)
 \leq
C e^{(\sqrt{2}-\theta) K}\left(\frac{t-s+1}{t+1}\right)^{\frac{3(\sqrt{2}-\theta)}{2 \theta}}(s+1)^{-\frac{3}{2}} .
\end{equation*}
Since now $\theta < \sqrt{2}$, a simple computation yields
\begin{equation*}
  \E[ Y_{t}(K) ] \lesssim _{A, \alpha, K}  t^{3/2} \int_{t- \varepsilon(t)}^{t}  \left(\frac{t-s+1}{t+1}\right)^{\frac{3(\sqrt{2}-\theta)}{2 \theta}}(s+1)^{-\frac{3}{2}}
  \dif s  \overset{t \to \infty}{\longrightarrow} 0 .
\end{equation*}

{\bf Step 2} (Proof of \eqref{eq-Tu-greater-R}).
As in Step 1, it suffices to show that for each  $A, R, K>0$, the expectation of
\begin{equation*}
 Y_{t}(A,R,K):= \sum_{u \in \mathcal{B}} 1_{\{T_{u}\in [R, t- \varepsilon(t)]\}}
  1_{\{ M^{u}_{t} \geq m^{1,2}_{t}-A\}} 1_{\{X_{u}(r) \leq \sqrt{2}r +\sigma K, r \leq T_{u}\}}
\end{equation*}
converges to $0$ as first $t \to \infty$ then $R \to \infty$.
Let $F_{t}(r,x):=\mathsf{P}(x+\mathsf{M}_{r} > m^{1,2}_{t}-A)$, where  $\mathsf{M}_{r}$ is the maximum at time $r$ of a standard BBM.
By the branching  Markov property, Lemma \ref{many-to-one-1}(ii),
 and Girsanov's theorem
\begin{align*}
	\E[Y_{t}(A,R,K)]&= \E\left[ \sum_{u \in \mathcal{B}}  1_{\{T_{u}\in [R, t- \varepsilon(t)]\}}
	1_{\{X_{u}(r) \leq \sqrt{2}r +\sigma K, r \leq T_{u}\}} F_t(t-T_{u}, X_{u}(T_{u})) \right]\\
	&= \alpha \int_{R}^{t-\varepsilon(t)} e^{\beta s} \mathbf{E}\left[  F_t(t-s, \sigma B_{s}) 1_{\{B_{r} \leq \sqrt{2 \beta} r + K, r \leq s \} } \right] \dif s \\
	&=  \alpha \int_{R}^{t-\varepsilon(t)} \mathbf{E
	}\left[  e^{- \sqrt{2\beta} B_s} F_t(t-s, \sigma B_{s}+\sqrt{2}s) 1_{\{B_{r} \leq   K, r \leq s \} } \right] \dif s .
\end{align*}
Applying \eqref{eq-Max-BBM-tail-2}, we have  for all $s \leq t-\varepsilon(t)$ and $x \in \mathbb{R}$,
\begin{equation*}
\begin{aligned}
F_{t}(t-s, x) &=\mathsf{P}\left(\mathsf{M}_{t-s} \geq
m^{1,2}_{t-s}
+\sqrt{2} s+\frac{3}{2 \sqrt{2}} \log \frac{t-s}{t} -A-x\right) \\
& \lesssim_{A}   \left(\frac{t}{t-s}\right)^{\frac{3}{2}}\left(1+ |\sqrt{2} s -x| + \log\frac{t}{t-s} \right) e^{-\sqrt{2}(\sqrt{2} s-x)} ,
\end{aligned}
\end{equation*}
We now  get upper bound the expectation:
\begin{align*}
  \E[Y_{t}(A,R,K)]
     \lesssim_{A}
  \alpha\int_{R}^{t-\varepsilon(t)} \left(\frac{t}{t-s}\right)^{\frac{3}{2}} \mathbf{E}\left[  \left(1+ |B_{s}| + \log\frac{t}{t-s}\right)  e^{\sqrt{2} (\sigma-\sqrt{\beta})B_{s}} 1_{\{B_{r} \leq K, r \leq s\}} \right] \dif s \,.
\end{align*}
Applying \eqref{eq-brownian-estimate}, as in the proof of Lemma \ref{lem-bar-Z-infinity}, we have
\begin{align*}
  & \mathbf{E}\left[  \left(1+ |B_{s}| + \log\frac{t}{t-s}\right)  e^{\sqrt{2} (\sigma-\sqrt{\beta})B_{s}} 1_{\{B_{r} \leq K, r \leq s\}} \right] \\
  & \leq \sum_{n\geq 0} \left(2+K+n+ \log\frac{t}{t-s} \right)   e^{\sqrt{2} (\sigma-\sqrt{\beta})(K-n)}  \frac{CK n}{s^{3/2}} \lesssim_{\beta,K}  \frac{ (1+ \log\frac{t}{t-s}) }{s^{3/2}} .
\end{align*}
Thus
\begin{align*}
 &  \E[Y_{t}(A,R,K)]
  \lesssim_{\alpha,\beta, A,K} \int_{R}^{t-\varepsilon(t)} \left(\frac{t}{t-s}\right)^{\frac{3}{2}}   \frac{ (1+ \log\frac{t}{t-s}) }{s^{3/2}}  \dif s \\
  &= \left( \int_{R}^{t/2}   +  \int_{t/2}^{t-\varepsilon(t)} \right) \left(\frac{t}{t-s}\right)^{\frac{3}{2}}   \frac{ (1+ \log\frac{t}{t-s}) }{s^{3/2}}  \dif s =: (I)+ (II).
\end{align*}
For (I), since $\frac{t}{t-s} \leq 2$ when $s \in [R,t/2]$, we have
\begin{equation*}
  \int_{R}^{t/2} \left(\frac{t}{t-s}\right)^{\frac{3}{2}}   \frac{ (1+ \log\frac{t}{t-s}) }{s^{3/2}}  \dif s \leq 100  \int_{R}^{\infty}   \frac{  1 }{s^{3/2}}  \dif s \overset{R \to \infty}{\longrightarrow} 0 .
\end{equation*}
For (II), making a change of variable  $t-s = \lambda t$, we have
\begin{align*}
  & \int_{t/2}^{t-\varepsilon(t)} \left(\frac{t}{t-s}\right)^{\frac{3}{2}}   \frac{ (1+ \log\frac{t}{t-s}) }{s^{3/2}}  \dif s  = \frac{1}{\sqrt{t}} \int_{\varepsilon(t)/t}^{1/2}  \frac{1}{\lambda^{3/2}} \left(1+\log \frac{1}{\lambda} \right)  \frac{1}{(1-\lambda)^{3/2}}  \dif \lambda \\
  & \lesssim   \frac{1+\log t }{\sqrt{t}}  \int_{\varepsilon(t)/t}^{1/2}  \frac{1}{\lambda^{3/2}}  \dif \lambda    \lesssim \frac{1+\log t}{\sqrt{t}} \sqrt{ \frac{t}{\varepsilon(t)}  } \overset{t \to \infty}{\longrightarrow} 0  .
\end{align*}
 We now complete the proof.
\end{proof}

\section{Spine decomposition and Proofs of Propositions \ref{lem-functional-convergence-additive-martingale} and \ref{lem-functional-convergence-derivative-martingale}}\label{sec-Proof-spine-decomposition}

\subsection{Proof of Proposition \ref{lem-functional-convergence-additive-martingale}}\label{spine1}
\begin{proof}
  It is well-known that, for $\eta \in [0, \sqrt{2})$, the additive martingale
  \begin{equation*}
    \mathsf{W}_{t}(\eta) = \sum_{u \in \mathsf{N}_{t}}  e^{\eta \mathsf{X}_{u}(t)- (\eta^2/2+1) t }
  \end{equation*}
  is uniformly integrable and converges to a non-trivial limit $\mathsf{W}_{\infty}(\eta)$ as $t\to\infty$.
Moreover, the law of the BBM under the  probability measure tilted by $\mathsf{W}_{t}(\eta)$, can be described naturally by introducing a spine (see for example \cite{Kyprianou03}).
We identify the spine as a distinguished genealogical line of descent from the initial ancestor and denote the individual on the spine at time $t$ by $\xi_{t}$.
More precisely, 
there exists a branching diffusion $(\{\mathsf{X}_{u}(t):u \in \mathsf{N}_{t}, \xi_{t}\}_{t\geq 0}, \widehat{\mathsf{P}})$ with  distinguished and randomized spine,  defined on some enlarged probability space $(\widehat\Omega, \widehat{\mathcal{F}}, \widehat{\mathsf{P}})$, having the following properties:
\begin{enumerate}[(i)]
\item the diffusion along the spine begins from the origin of space at time $0$ and moves according to a Brownian motion with drift $\eta$,
\item  points of fission along the spine are independent of its motion and occur with  rate $2$,
\item  at each fission time of the spine, the spine gives birth to 2 offspring, and the spine is chosen randomly so that at each fission point, the next individual to represent the spine is chosen
with uniform probability from the two offsprings,
\item offspring of individuals on the spine which are not part of the spine initiate $\mathsf{P}$-branching Brownian motions
at their space-time point of creation.
\end{enumerate}
Let $\mathcal{F}_{t}$ be the $\sigma$-algebra generated by the BBM up to time $t$, i.e., $\mathcal{F}_{t}=\sigma( \{\mathsf{X}_{u}(s): s \leq t, u \in \mathsf{N}_{t}\})$. It was established in \cite{Kyprianou03} that
  \begin{equation*}
  \dif \widehat{\mathsf{P}}|_{\mathcal{F}_{t}} = \mathsf{W}_{t}(\eta) \dif \mathsf{P}|_{\mathcal{F}_{t}} \  \text{ and } \
    \widehat{\mathsf{P}}( \xi_{t}= u | \mathcal{F}_{t}  ) =   \frac{ e^{\eta \mathsf{X}_{u}(t)- (\eta^2/2+1) t } }{\mathsf{W}_{t}(\eta)} 1_{\{u \in \mathsf{N}_{t} \}},
  \end{equation*}
Denote $\Xi(t):= \mathsf{X}_{\xi_{t}}(t)$, i.e., $\Xi_{t}$ is the position of the individual on the spine at time $t$.
   We now have, for every $t\geq 0$ and measurable function $f$,
  \begin{equation*}
    \begin{aligned}
  \widehat{\mathsf{E}}\left[  f\left(\frac{\Xi(t)}{t}\right)\big| \mathcal{F}_{t} \right]
  & =   \widehat{\mathsf{E}}\left[ \sum_{u \in \mathsf{N}_{t}}  f\left(\frac{\mathsf{X}_{u}(t)}{t}\right) 1_{\{\xi_{t}=u\}} \big| \mathcal{F}_{t} \right]  \\
  &
  = \sum_{u \in \mathsf{N}_{t}} f\left(\frac{\mathsf{X}_{u}(t)}{t}\right)  \frac{ e^{\eta \mathsf{X}_{u}(t)- (\eta^2/2+1) t } }{\mathsf{W}_{t}(\eta)} = \frac{W^{f}_{t}(\eta)}{W_{t}(\eta)} .
    \end{aligned}
  \end{equation*}
  Since under $\widehat{\mathsf{P}}$,  $\Xi$ is a Brownian motion with drift $\eta$, and
  the function $x \mapsto f(x)$ is bounded continuous,
  $f(\frac{\Xi(t)}{t})$ converges to $f(\eta) $ in $L^{1}(\widehat{\mathsf{P}})$. Thanks to the  Jensen's inequality  we have
  \begin{equation*}
   \lim_{t \to \infty} \frac{W^{f}_{t}(\eta)}{W_{t}(\eta)}= \lim_{t \to \infty} \widehat{\mathsf{E}}\left[  f\left(\frac{\Xi(t)}{t}\right) \Big| \mathcal{F}_{t} \right] \to f(\eta) \text{ in } L^{1}(\widehat{\mathsf{P}}),
  \end{equation*}
  Thus
  \begin{equation*}
  \mathsf{E}\left[   |W^{f}_{t}(\eta)-f(\eta) W_{t}(\eta) | \right]= \widehat{\mathsf{E}} \bigg[\bigg|\frac{W^{f}_{t}(\eta)}{W_{t}(\eta)} -f(\eta)\bigg| \bigg] \to 0 ,
  \end{equation*}
  yielding the desired result.
  \end{proof}

\subsection{Proof of Proposition \ref{lem-functional-convergence-derivative-martingale}}

The proof is inspired by \cite{AS14}.
Fix an arbitrarily $K>0$.
Define the \textit{truncated derivative martingale }
 $\mathsf{D}^{(K)}_{t}$ as the following:
\begin{equation}\label{eq-truncated derivative martingale}
\mathsf{D}^{(K)}_{t}:= \sum_{u \in \mathsf{N}_{t}} (\sqrt{2} t- \mathsf{X}_{u}(t)+ K)  e^{-\sqrt{2}(\sqrt{2} t-\mathsf{X}_{u}(t))}   1_{\{ \sqrt{2} s- \mathsf{X}_{u}(s) \geq - K,\, \forall s \leq t\}} .
\end{equation}

It was proved in \cite{Kyprianou03}
that $(\mathsf{D}^{(K)}_{t})_{t>0}$ is a uniformly integrable martingale  and $ \lim\limits_{t \to \infty} \mathsf{D}^{(K)}_{t} = \mathsf{Z}_{\infty} $ a.s. on the event $\{  \mathsf{X}_{u}(t) \leq \sqrt{2} t+ K, \forall t  \geq 0, u \in \mathsf{N}_{t} \}$, whose probability tends to $1$ when $K\to \infty$.
Moreover, using this martingale to do a change of measure,
we can also  get a  ``spine decomposition" of the BBM:
Let $\mathcal{F}_{t}$ be the $\sigma$-algebra generated by the branching Brownian motion up to time $t$.
Let  $\mathsf{Q}^{(K)}$ be the probability measure such that
\begin{equation}\label{eq-DK-change-of-measure}
\dif \mathsf{Q}^{(K)}|_{\mathcal{F}_{t}} = \frac{\mathsf{D}^{(K)}_{t}}{K} \cdot \dif \mathsf{P}|_{\mathcal{F}_{t}}.
\end{equation}
Similar to the spine decomposition in Subsection \ref{spine1}, we can identify distinguished genealogical lines of descent from the initial ancestor each of which
shall be referred to as a spine.
Denoting by $\xi_{t}$ the individual belonging to the spine, and by $\Xi(t)= \mathsf{X}_{\xi_{t}}(t)$ the position of this individual.
The process $(\mathsf{X}_{u}(t):u \in \mathsf{N}_{t})_{t\geq 0}$ under  $\mathsf{Q}^{(K)}$ has similar properties as in Subsection \ref{spine1} with (i)  changed to (i') below and other items (ii) to (iv) unchanged:
\begin{enumerate}
  \item[(i')] The spatial motion $(\Xi(t))_{t\geq 0}$ of the  spine is such that $R_{K}(t):= \sqrt{2} t- \Xi(t)+ K$ is a $3$-dim Bessel process started at $K$.
\end{enumerate}
For any $t>0$ and any $u \in \mathsf{N}_{t}$,  we have
\begin{equation}\label{eq-conditional-probability-spine-is-u}
\mathsf{Q}^{(K)}(\xi_{t}= u | \mathcal{F}_{t}) = \frac{  (\sqrt{2} t- \mathsf{X}_{u}(t)+ K)  e^{-\sqrt{2}(\sqrt{2} t-\mathsf{X}_{u}(t))}  }{\mathsf{D}^{(K)}_{t}}    1_{\{ \sqrt{2}s- \mathsf{X}_{u}(s) \geq - K, \,\forall s \leq t\}}.
\end{equation}

Throughout  this section, we use $(R_{t}, \mathbf{P}^{\mathrm{Bes}}_{x})$ to denote a Bessel-$3$ process  starting from $x$. The expectation with respect to $\mathbf{P}^{\mathrm{Bes}}_{x}$ is denoted as $\mathbf{E}^{\mathrm{Bes}}_{x}$.
We truncate $\mathsf{W}_{t}^{F_{t}}(\sqrt{2})$ as in \eqref{eq-truncated derivative martingale},
 that is to say, we  define
\begin{equation}\label{def-W-K-F-t-sqrt2}
  \mathsf{W}^{(K),F_{t}}_{t}(\sqrt{2}):= \sum_{u \in \mathsf{N}_{t}} F_{t}\left( \frac{\sqrt{2} t- \mathsf{X}_{u}(t)}{\sqrt{t}}  \right)e^{-\sqrt{2}(\sqrt{2} t-\mathsf{X}_{u}(t))}   1_{\{ \sqrt{2} s- \mathsf{X}_{u}(s) \geq - K,\, \forall s \leq t\}} .
\end{equation}

\begin{proposition}\label{prop-convergence-in-L^2-Q}
For each fixed $K \in \mathbb{N}$ and $\lambda>0$,
\begin{equation}\label{eq-convergence-in-L^2-Q}
\mathsf E_{\mathsf{Q}^{(K)}}
  \bigg| \frac{\sqrt{t}}{\langle F_{t}, \mu \rangle}  \frac{\mathsf{W}^{(K),F_{t}}_{t}(\sqrt{2}) }{\mathsf{D}^{(K)}_{t}} - \sqrt{\frac{2}{\pi}}\bigg|  \to 0, \quad \text{ as } t \to \infty .
\end{equation}
\end{proposition}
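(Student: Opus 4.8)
The plan is to establish \eqref{eq-convergence-in-L^2-Q} by the spine method of Aïdékon--Shi \cite{AS14}: a first-moment computation identifies the limit $\sqrt{2/\pi}$, and a $p$-th moment estimate with $p\in(1,2)$ \emph{carefully chosen} provides the concentration.

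First I would use the spine conditional probability \eqref{eq-conditional-probability-spine-is-u} to write, with $R_{K}(s):=\sqrt 2\,s-\Xi(s)+K$,
\[
\frac{\mathsf{W}^{(K),F_{t}}_{t}(\sqrt 2)}{\mathsf{D}^{(K)}_{t}}
=\mathsf{E}_{\mathsf{Q}^{(K)}}\!\left[\,\frac{F_{t}\!\big(\tfrac{R_{K}(t)-K}{\sqrt t}\big)}{R_{K}(t)}\ \Big|\ \mathcal{F}_{t}\right],
\]
so that, taking $\mathsf{E}_{\mathsf{Q}^{(K)}}$ and recalling that $(R_{K}(s))_{s\ge0}$ has the law of a Bessel-$3$ process from $K$, with transition density $q_{s}(K,y)=\frac{y}{K\sqrt{2\pi s}}\big(e^{-(y-K)^{2}/2s}-e^{-(y+K)^{2}/2s}\big)$ on $y>0$, the substitution $z=(y-K)/\sqrt t$ gives
\[
\mathsf{E}_{\mathsf{Q}^{(K)}}\!\left[\frac{\mathsf{W}^{(K),F_{t}}_{t}(\sqrt 2)}{\mathsf{D}^{(K)}_{t}}\right]
=\frac{1}{K\sqrt{2\pi}}\int_{-K/\sqrt t}^{\infty}F_{t}(z)\Big(e^{-z^{2}/2}-e^{-(z+2K/\sqrt t)^{2}/2}\Big)\,\dif z .
\]
The hypotheses $t^{-1/2+\epsilon}\le r_{t}\le\bar r$ and $r_{t}+yh_{t}=\Theta(r_{t})$ force $\mathrm{supp}(F_{t})$ to lie at scale $r_{t}$, with $r_{t}\gg t^{-1/2}$, where $e^{-z^{2}/2}-e^{-(z+2K/\sqrt t)^{2}/2}=(1+o(1))\tfrac{2Kz}{\sqrt t}\,e^{-z^{2}/2}$ uniformly; hence the right-hand side equals $(1+o(1))\sqrt{2/(\pi t)}\,\langle F_{t},\mu\rangle$, i.e.\ the $\mathsf{Q}^{(K)}$-mean of $\frac{\sqrt t}{\langle F_{t},\mu\rangle}\frac{\mathsf{W}^{(K),F_{t}}_{t}(\sqrt 2)}{\mathsf{D}^{(K)}_{t}}$ converges to $\sqrt{2/\pi}$. (Equivalently one computes $\mathsf{E}_{\mathsf{P}}[\mathsf{W}^{(K),F_{t}}_{t}(\sqrt 2)]$ by the many-to-one lemma and a ballot-type estimate, using that \eqref{eq-DK-change-of-measure} gives $\mathsf{E}_{\mathsf{Q}^{(K)}}[\,\cdot/\mathsf{D}^{(K)}_{t}\,]=\tfrac1K\mathsf{E}_{\mathsf{P}}[\,\cdot\,]$.)

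Since the convergence of the mean alone does not yield $L^{1}$ convergence, the core of the proof is a concentration estimate via spine decomposition under $\mathsf{Q}^{(K)}$. Let $\mathcal{G}$ be the $\sigma$-algebra generated by the spine: the trajectory $(\Xi(s))_{s\le t}$, its fission times $s_{1}<s_{2}<\cdots$ in $[0,t]$, and the off-spine children born there. Conditionally on $\mathcal{G}$ the subtrees hanging off the spine are independent BBMs, and since $R_{K}\ge0$ the truncation indicator is automatically satisfied along the spine; hence $\mathsf{W}^{(K),F_{t}}_{t}(\sqrt 2)$ (and likewise $\mathsf{D}^{(K)}_{t}$) splits as a spine term plus a conditionally independent sum $\sum_{i:\,s_{i}\le t}\Psi_{i}$ over the off-spine subtrees rooted at $(s_{i},\Xi(s_{i}))$. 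A many-to-one computation inside each subtree, integrated against the Bessel-$3$ law of $R_{K}$ restricted to a typical window, identifies the $\mathcal{G}$-conditional mean of $\mathsf{W}^{(K),F_{t}}_{t}(\sqrt 2)/\mathsf{D}^{(K)}_{t}$ with $(1+o(1))\sqrt{2/(\pi t)}\,\langle F_{t},\mu\rangle$ on a good event. The fluctuation about this conditional mean is then controlled by a $p$-th conditional moment of $\sum_{i}\Psi_{i}$ through a von Bahr--Esseen inequality — the second moment is already divergent when $h_{t}=o(1)$, which forces $p\in(1,2)$ — together with a good event $G_{t}$ (constraining the spine to an envelope, with parameters tuned to $r_{t}$ and $h_{t}$) on which $\mathsf{D}^{(K)}_{t}$ stays bounded below by a constant multiple of $\mathsf{D}^{(K)}_{t_{0}}$ for a suitable intermediate time $t_{0}$. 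This produces a bound on the $L^{p}(\mathsf{Q}^{(K)})$-error $\Delta_{p}(t)$ in terms of $\Delta_{p}$ at earlier times, and a bootstrap closes the estimate, giving $\Delta_{p}(t)\to0$. Combining this with the first step yields convergence in $\mathsf{Q}^{(K)}$-probability, and the $O(1)$ bound on the $p$-th moment $I_{p}(t)$ gives uniform integrability, upgrading to the $L^{1}(\mathsf{Q}^{(K)})$ statement \eqref{eq-convergence-in-L^2-Q}.

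The routine part is the Bessel-$3$/ballot first-moment asymptotics. The main obstacle is the concentration step: choosing $p\in(1,2)$ and the parameters of $G_{t}$ so that $I_{p}(t)$ stays bounded and the bootstrap for $\Delta_{p}(t)$ actually closes, uniformly in the delicate regime $h_{t}\asymp t^{-1/4}$ (and more generally $r_{t}\to0$), where the naive second-moment argument that suffices for the classical CLT case $r_{t}=h_{t}=\Theta(1)$ breaks down.
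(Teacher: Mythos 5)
Your first step is fine and is exactly the paper's: the identity obtained from \eqref{eq-conditional-probability-spine-is-u} (equation \eqref{eq-conditional-expectation} in the paper) plus the Bessel-$3$ computation of the mean, giving $\sqrt{2/\pi}$, is Lemma \ref{lem-reduce-to-p-moment}(1). The gap is in the concentration step, which is the whole content of the proposition. You propose to condition on the spine $\sigma$-algebra $\mathcal{G}$ (the full spine up to time $t$, its fission times and the grafted roots), to ``identify the $\mathcal{G}$-conditional mean of $\mathsf{W}^{(K),F_{t}}_{t}(\sqrt 2)/\mathsf{D}^{(K)}_{t}$'' by many-to-one computations in the grafted subtrees, and then to control the fluctuation of the off-spine sum $\sum_i\Psi_i$ by a von Bahr--Esseen $p$-th moment bound. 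This mechanism fails: many-to-one gives the $\mathcal{G}$-conditional means of $\mathsf{W}$ and $\mathsf{D}$ \emph{separately}, not of their ratio; and neither $\mathsf{W}$ nor $\mathsf{D}$ concentrates around its $\mathcal{G}$-conditional mean, because the subtrees grafted at the first few fission times carry a non-vanishing fraction of both sums and have $\Theta(1)$ relative fluctuations conditionally on $\mathcal{G}$ (this is precisely why $\mathsf{D}^{(K)}_{\infty}$ is a nondegenerate random variable under $\mathsf{Q}^{(K)}$). The ratio concentrates only because these early fluctuations cancel between numerator and denominator, and establishing that cancellation is exactly the statement being proved, so a scheme of the form ``numerator concentrates about its conditional mean, denominator is bounded below'' is circular; in particular a lower bound $\mathsf{D}^{(K)}_{t}\geq c\,\mathsf{D}^{(K)}_{t_0}$ does not help, since the early-subtree contributions to $\mathsf{W}_t$ must still be compared with the random $\mathsf{D}^{(K)}_{t}$ and pose the same self-similar problem.

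What actually closes the argument (and is absent from your sketch) is the paper's treatment of the ratio as a whole: re-use \eqref{eq-conditional-expectation} once inside the $p$-th moment, so that $I_p(t)$ of \eqref{eq-p-moment} becomes $(\sqrt t/\langle F_t,\mu\rangle)^p\,\mathsf{E}_{\mathsf{Q}^{(K)}}\big[(\mathsf{W}^{(K),F}_t/\mathsf{D}^{(K)}_t)^{p-1}R_K(t)^{-1}F_t(\cdot)\big]$; cut at $k_t=t^{1-\epsilon}$ and, on the good event \eqref{def-G}, discard the subtrees grafted after $k_t$ (their total contribution to $\mathsf{D}$ is at most $e^{-\log^2 t}$ there); then decouple the $(p-1)$-power of the ratio built from subtrees grafted before $k_t$ from the functional of the spine after $k_t$ via the branching Markov property and the estimate \eqref{eq-Bessel-moment} with $p=1$, uniformly over starting points in $[k_t^a,k_t^b]$. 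This yields $\limsup_t I_p(t)\le(2/\pi)^{p/2}$, which together with the first moment and Jensen gives convergence in probability and uniform integrability (Lemma \ref{lem-reduce-to-p-moment}(2)(ii)). The bad-event term $\Delta_p(t)$ of \eqref{eq-def-Delta-t} is handled by H\"older combined with Lemma \ref{lem-estimate-good-events}(ii), crucially using that $F_t\neq 0$ forces $R_K(t)$ into a window of $\mathsf{Q}^{(K)}$-probability $O(r_t^2h_t\log^3 t(\cdots))$, which beats the normalization $\langle F_t,\mu\rangle^{-p}\asymp (r_th_t)^{-p}$; the bootstrap is then the same-$t$ inequality $[\Delta_p(t)]^p\lesssim t^{-\epsilon/4}+t^{-\epsilon/4}[\Delta_p(t)]^{p-1}$ iterated against the a priori bound $I_p(t)\lesssim (r_t^2h_t)^{1-p}$, not a recursion over earlier times as you describe. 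Without the $(p-1)$-power reduction, the specific choice $p\in(1,1+\epsilon/2]$ tied to $r_t\sqrt t\ge t^{\epsilon}$, the window estimate, and this bootstrap, your outline does not produce a proof.
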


Proposition \ref{lem-functional-convergence-derivative-martingale} is  an easy consequence of Proposition \ref{prop-convergence-in-L^2-Q}.

\begin{proof}[Proof of  Proposition \ref{lem-functional-convergence-derivative-martingale} admitting Proposition \ref{prop-convergence-in-L^2-Q}]
	
(i)
Applying
  \eqref{eq-DK-change-of-measure} and \eqref{eq-convergence-in-L^2-Q},
 we have, for any $K>0$,
\begin{equation*}
\mathsf E  \bigg|\frac{\sqrt{t}}{\langle F_{t}, \mu \rangle} \mathsf{W}^{(K),F_{t}}_{t}(\sqrt{2}) -  \sqrt{\frac{2}{\pi}} \mathsf{D}^{(K)}_{t} \bigg|    = K \, \mathsf E_{\mathsf{Q}^{(K)}}   \bigg|\frac{\sqrt{t}}{\langle F_{t}, \mu \rangle}  \frac{\mathsf{W}^{(K),F_{t}}_{t}(\sqrt{2}) }{\mathsf{D}^{(K)}_{t}} - \sqrt{\frac{2}{\pi}} \bigg|  \  \overset{t \to \infty}{\longrightarrow} 0.
\end{equation*}
Since $\mathsf{D}^{(K)}_{t}$ is a uniformly integrable martingale, writing $\mathsf{D}^{(K)}_{\infty}:=\lim_{t}\mathsf{D}^{(K)}_{t} $, we have $\mathsf{D}^{(K)}_{t}
\overset{t \to \infty}{\longrightarrow}
\mathsf{D}^{(K)}_{\infty} $ in $L^{1}(\mathsf{P})$. Combined with the equation above, we have
\begin{equation}\label{eq-exp-to-0}
\mathsf E\left[  \bigg|  \frac{\sqrt{t}}{\langle F_{t}, \mu \rangle} \mathsf{W}^{(K),F_{t}}_{t}(\sqrt{2}) -  \sqrt{\frac{2}{\pi}}  \mathsf{D}^{(K)}_{\infty}  \bigg| \right] \overset{t \to \infty}{\longrightarrow} 0.
\end{equation}
On the event
$\{ \mathsf{X}_{u}(t) \leq \sqrt{2}t + K , \forall \ t \geq 0 , u \in \mathsf{N}_t\}$, which has high probability when $K$ is large (see \eqref{eq-upper-envelope-0}), we have $\mathsf{W}^{(K), F_{t}}_{ t}(\sqrt{2})= \mathsf{W}^{F_{t}}_{ t}(\sqrt{2})$ and $\mathsf{D}^{(K)}_{\infty}=\mathsf{Z}_{\infty}$.
Therefore, $ \frac{\sqrt{t}}{\langle F_{t}, \mu \rangle} \mathsf{W}^{F_{t}}_{t}(\sqrt{2}) $ converges to $ \sqrt{\frac{2}{\pi}} \mathsf{Z}_{\infty}$ in probability  $\mathsf{P}$  as $t\to\infty$.

(ii) We now prove \eqref{conv-prop2}. Note that for $F_{t,\lambda}=G((z-\lambda) t^{1/4})$,
 $\langle F_{t,\lambda},\mu \rangle \sim C_{\lambda} t^{-1/4}$
uniformly in $\lambda \in I$,  where
$C_{\lambda}=\lambda e^{-\frac{\lambda^2}{2}} \int G(y) \dif y$, thus
\begin{equation*}
  \mathsf{W}^{(K),F_{t,\lambda}}_{t-\lambda \sqrt{t}}(\sqrt{2})=\mathsf{W}^{(K),\widetilde F_{\tilde t,\lambda}}_{\tilde t }(\sqrt{2})
\end{equation*}
with $\tilde t=t-\lambda \sqrt{t}$, $ \widetilde F_{\tilde t,\lambda}=G((z-\lambda)\tilde h_{\tilde t})$, and $\tilde h_{\tilde t}=t^{-1/4}$. Using \eqref{eq-exp-to-0} with $t$ replaced by $\tilde t$ and  $h_t$ replaced by
$\tilde h_{\tilde t}$, we get
\begin{align*}
  \mathsf E  \bigg|
  \int_{I}  t^{3/4}  \mathsf{W}^{(K),F_{t,\lambda}}_{t-\lambda \sqrt{t}}(\sqrt{2})  -  \sqrt{\frac{2}{\pi}}  C_{\lambda} \mathsf{D}^{(K)}_{\infty} \dif \lambda    \bigg|   \leq \int_{I} \mathsf E \bigg|  t^{3/4}  \mathsf{W}^{(K),F_{t,\lambda}}_{t-\lambda \sqrt{t}}(\sqrt{2}) - \sqrt{\frac{2}{\pi}} C_{\lambda} \mathsf{D}^{(K)}_{\infty}  \bigg|  \dif \lambda  \to 0
\end{align*}
$t \to \infty$, where we used the dominated convergence theorem.
In fact, \eqref{eq-1st-moment-convergence} below implies that $\mathsf{E}\left[  \frac{\sqrt{t}}{\langle F_{t,\lambda}, \mu \rangle} \mathsf{W}^{(K),F_{t,\lambda}}_{t-\lambda \sqrt{t}}(\sqrt{2}) \right] \leq 1$ for all $\lambda \in I$ and for large $t$,  which guarantees the  existence of a dominating function.
 Then the argument below \eqref{eq-exp-to-0} yields that we can  remove the truncation in $\mathsf{W}^{(K),F_{t,\lambda}}_{t-\lambda\sqrt{t}}(\sqrt{2})$ and $\mathsf{D}^{(K)}_{\infty} $ and get  the desired  convergence in probability result.
\end{proof}

  The remaining part in this section is
 devoted to proving
  Proposition \ref{prop-convergence-in-L^2-Q}. The parameter $K$ is always fixed. For simplicity, $\mathsf{W}^{(K),F_{t}}_{t}(\sqrt{2})$ is abbreviated as $\mathsf{W}^{(K),F}_{t}$.
  Recall that $R_K(t)=\sqrt{2}t-\Xi(t)+K$. We set
  \begin{equation}\label{def-U^K_t}
  U^{(K)}_{t} :=  \frac{\sqrt{t}}{\langle F_{t}, \mu \rangle}
  \frac{\mathsf{W}^{(K),F}_{t} }{\mathsf{D}^{(K)}_{t}},
  \end{equation}
  and
  \begin{equation*} \ V^{(K)}_{t} :=  \frac{\sqrt{t}}{\langle F_{t}, \mu \rangle}   \frac{1}{R_{K}(t)} F_{t}\left( \frac{ R_{K}(t)-K}{\sqrt{t}}\right).
 \end{equation*}
 To prove Proposition \ref{prop-convergence-in-L^2-Q}, we only need to prove that
 \begin{equation}\label{limit-U^k} \lim_{t\to\infty}\mathsf E_{\mathsf{Q}^{(K)}} \left| U^{(K)}_{t}- \sqrt{2/\pi}  \right|= 0.\end{equation}

  \begin{lemma}
  For each $t > 0$, we have
  \begin{equation}\label{eq-conditional-expectation}
      U^{(K)}_{t} = \mathsf E_{\mathsf{Q}^{(K)}}\left[  V^{(K)}_{t}  \big| \mathcal{F}_{t} \right].
    \end{equation}
    \end{lemma}

    \begin{proof}
    By \eqref{eq-conditional-probability-spine-is-u}, we have
    \begin{equation*}
      \begin{aligned}
        & \mathsf E_{\mathsf{Q}^{(K)}}\left[  \frac{1}{R_{K}(t)} F_{t}\left(  \frac{ R_{K}(t)-K}{\sqrt{t}}\right) \big| \mathcal{F}_{t} \right] = \sum_{u \in \mathsf{N}_{t}}  \mathsf{Q}^{(K)}[ \xi_{t}=u | \mathcal{F}_{t} ] \frac{F_{t} ( [\sqrt{2}t-\mathsf{X}_{u}(t)]/\sqrt{t} )}{\sqrt{2}t-\mathsf{X}_{u}(t)+K}  \\
  &= \frac{1}{\mathsf{D}^{(K)}_{t}}\sum_{u \in \mathsf{N}_{t}} F_{t}\left(\frac{ \sqrt{2}t-\mathsf{X}_{u}(t)}{\sqrt{t}}  \right)  e^{-\sqrt{2}(\sqrt{2}t-\mathsf{X}_{u}(t))}     1_{\{ \sqrt{2}s- \mathsf{X}_{u}(s) \geq - K, \forall s \leq t\}} =   \frac{\mathsf{W}^{(K),F}_{t} }{\mathsf{D}^{(K)}_{t}}
      \end{aligned}
    \end{equation*}
    as desired.
  \end{proof}

  \begin{lemma}\label{lem-reduce-to-p-moment}
    For $p \in [1,2]$, define
    \begin{equation}\label{eq-p-moment}
      I_{p}(t):=\mathsf E_{\mathsf{Q}^{(K)}}\left[   \left(  U^{(K)}_{t} \right)^{p}  \right].
    \end{equation}
    Then
    \begin{equation}\label{eq-1st-moment-convergence}
     \lim_{t \to \infty} I_{1}(t) =    \lim_{t \to \infty}  \mathsf{E}_{\mathsf{Q}^{(K)}}\left[ V^{(K)}_{t} \right] = \sqrt{\frac{2}{\pi}},
    \end{equation}
     and  $\limsup_{t \to \infty} \,   (r_{t}^2 h_{t})^{p-1} I_{p}(t) < \infty$.
    \end{lemma}

  \begin{proof}
  Noticing that
  $(R_{K}(t),\mathsf{Q}^{(K)})$ is a Bessel-$3$ process starting from $K$, by \eqref{eq-conditional-expectation}, we have
  \begin{equation}\label{eq-1st-moment-1}
    I_{1}(t) = \mathsf E_{\mathsf{Q}^{(K)}} \left(   V^{(K)}_{t} \right)
  =
  \mathbf{E}^{\mathrm{Bes}}_{K} \left(   \frac{\sqrt{t}}{\langle F_{t}, \mu \rangle}  \frac{1}{R_{t} } F_{t}\left( \frac{R_{t}-K}{\sqrt{t}} \right)  \right)\,.
  \end{equation}
 For $p\in(1,2]$, since $x \mapsto x^{p}$ is convex, by \eqref{eq-conditional-expectation} and Jensen's inequality,
  \begin{equation}\label{eq-2nd-moment-2}
   I_{p}(t) \leq  \mathsf E_{\mathsf{Q}^{(K)}} \left( \left[  V^{(K)}_{t} \right]^{p} \right) =
    \mathbf{E}^{\mathrm{Bes}}_{K} \left( \left[  \frac{\sqrt{t}}{\langle F_{t}, \mu \rangle}  \frac{1}{R_{t} }  F_{t}\left( \frac{R_{t}-K}{\sqrt{t}} \right) \right]^{p} \right).
  \end{equation}
 Recall that  $F_{t}(z)=G([z-r_{t}]h_{t}^{-1})$  and
 our assumption that
 $r_{t}+y h_{t}= \Theta(r_{t})$ uniformly for $y \in \mathrm{supp}(G)$, simple computations yield that
  \begin{equation}\label{eq-order-of-Ft-mu}
  \langle F_{t},\mu \rangle = \int_{0}^{\infty}  G\left(\frac{z-r_{t}}{h_{t}}\right) z e^{-\frac{z^2}{2}} \dif  z  =  h_{t}   \int_{\mathbb{R}} G(y) (r_{t}+yh_{t})e^{-\frac{(r_{t}+yh_{t})^2}{2} }\dif y = \Theta(r_{t}h_{t}).
   \end{equation}
  We claim that, for $p \in [1, 2]$ and any $\epsilon_0>0$,
  \begin{equation}\label{eq-Bessel-moment}
  \mathbf{E}^{\mathrm{Bes}}_{u} \left( \left[  \frac{\sqrt{t}}{R_{t} }  F_{t}\left( \frac{R_{t}-K}{\sqrt{t}} \right) \right]^{p} \right)
  \sim   \sqrt{\frac{2}{\pi}} \, h_{t}
   \int_{\mathbb{R}} G(y)^{p}  (r_{t}+yh_{t})^{2-p}e^{-\frac{(r_{t}+yh_{t})^2}{2} }   \dif y = \Theta(r_{t}^{2-p}h_{t})
  \end{equation}
  uniformly for  $u \in (0,t^{\frac{1}{2}-\epsilon_0}]$.
  Then taking  $p=1$ in \eqref{eq-Bessel-moment} and combining  \eqref{eq-1st-moment-1}, \eqref{eq-order-of-Ft-mu}, we get \eqref{eq-1st-moment-convergence}.  Taking $p \in (1,2]$ in \eqref{eq-Bessel-moment} and using  \eqref{eq-2nd-moment-2}, \eqref{eq-order-of-Ft-mu}, we get $I(t) \lesssim (r_{t}^2h_{t})^{1-p}$.

  Now we show the claim.  Suppose $G$ is supported on $[-A, A]$ for some constant $A>0$.
  First we recall the density formula for $3$-dimensional Bessel process (see \cite[Section VI.3]{RY99})
  \begin{equation}\label{density-Bessel}
    \mathbf{P}^{\mathrm{Bes}}_{x}(R_{s} \in \dif z)=\sqrt{\frac{2}{\pi}} z^2e^{-\frac{(z-x)^2}{2s}}  \frac{  1-e^{-\frac{2xz}{s}} }{2 x z\sqrt{s}} \dif z, \quad\mbox{for }x, z >0.
  \end{equation}
  Using the scaling property for Bessel process, we have
    \begin{align}
  & \mathbf{E}^{\mathrm{Bes}}_{u} \left( \left[  \frac{\sqrt{t}}{R_{t} }  F_{t}\left( \frac{R_{t}-K}{\sqrt{t}} \right) \right]^{p} \right)  =  \mathbf{E}^{\mathrm{Bes}}_{\frac{u}{\sqrt{t}}} \left[    \frac{1}{R_{1}^{p} } F_{t}\left( R_{1}-K/{\sqrt{t}}\right)^{p} \right] \nonumber \\
  &=   \sqrt{\frac{2}{\pi}}
  \int_{| z-r_{t}-K/\sqrt{t}| \leq A h_{t}} G \left( \frac{  z-r_{t}-K/\sqrt{t}}{h_{t}}  \right)^{p}z^{2-p}
  e^{-\frac{(z-u/\sqrt{t})^2}{2s}} \frac{1-e^{-2zu/\sqrt{t} }}{2z u/\sqrt{t}}   \dif z \nonumber \\
  & =  h_{t} \sqrt{\frac{2}{\pi}} \int_{-A}^{A} G(y)^{p}z_{y}^{2-p}  e^{-\frac{1}{2}(z_{y}-\frac{u}{\sqrt{t}})^2}  \frac{ 1-e^{-2z_{y}{u}/{\sqrt{t}} } }{2z_{y} u/\sqrt{t}} \dif y,  \label{eq-expectation-Bes-0}
  \end{align}
  where in the last equality, we substituted  $z$ by $z_{y}= r_t+K/\sqrt{t}+ y h_{t}$.
  By the dominated convergence theorem, we get that,
  for any $ \epsilon_0>0$,
  \begin{equation*}
  \int_{-A}^{A}  G(y)^{p} z_{y}^{2-p}  e^{-\frac{1}{2}(z_{y}-\frac{u}{\sqrt{t}})^2}  \frac{ 1-e^{-2z_{y}{u}/{\sqrt{t}} } }{2z_{y} u/\sqrt{t}} \dif y  \sim
  \int_{-A}^{A} G(y)^{p}  (r_{t}+yh_{t})^{2-p}e^{-\frac{(r_{t}+yh_{t})^2}{2} }   \dif y
  \end{equation*}
  uniformly for  $u\in (0, t^{\frac{1}{2}-\epsilon_0}]$ as $t \to \infty$,
  and  \eqref{eq-Bessel-moment} follows.
  \end{proof}

  The naive bound  $I_{p}(t) \lesssim (r_{t}^2 h_{t})^{1-p}$
  in  Lemma \ref{lem-reduce-to-p-moment}  is not good enough to apply the second moment method (which needs to show that $\limsup\limits_{t \to \infty} I_{2}(t) \leq (\frac{2}{\pi})$). To overcome this difficulties, we need to further truncate $W^{(K),F}_{t}$, by only counting particles with typical behavior in the summation in \eqref{def-W-K-F-t-sqrt2}. The arguments are borrowed from A\"{i}d\'{e}kon, Shi  \cite{AS14} and Pain \cite{Pain18}. Before giving the typical behavior of the particles  we want, we need to introduce some notation as follows.

  Recall that $\xi_{s}$ represents the individual on the spine at time $s$.
  We write $\mathsf{N}^{\xi_{s}}_{t}$ as the descendants of $\xi_{s}$ at time $t$.
  The points of fission along the spine form a Poisson process with rate $2$,
  which is denoted by $\Lambda(\dif s)$.
  We then define,
   for a interval $[c_{1},c_{2}] \subset [0,t]$,
  \begin{equation*}
    \mathsf{D}_t^{(K), [c_{1},c_{2}]}:= \int_{[c_{1},c_{2}]} \sum_{u \in \mathsf{N}^{\xi_{s}}_{t}} (\sqrt{2} t-\mathsf{X}_{u}(t)+K) e^{-\sqrt{2}(\sqrt{2} t-\mathsf{X}_{u}(t))}   1_{\{ \sqrt{2}s- \mathsf{X}_{u}(s) \geq - K, \,\forall s \leq t\}} \Lambda(\dif s) \,,
    \end{equation*}
    which is the contribution  for the martingale $\mathsf{D}^{(K)}_{t}$ of particles who are descendants of individual on spine at time interval $[c_{1},c_{2}]$.
  Then $\mathsf{D}^{(K)}_{t}=\mathsf{D}^{(K),[0,t]}_{t}= \mathsf{D}^{(K),[0,c_{1})}_{t}+ \mathsf{D}^{(K),[c_{1},t]}_{t}$.
  Similarly, we define
  \begin{equation*}
  W_t^{(K),F,[c_{1},c_{2}]}:= \int_{[c_{1},c_{2}]} \sum_{u \in \mathsf{N}^{\xi_{s}}_{t}}  e^{-\sqrt{2}(\sqrt{2} t-\mathsf{X}_{u}(t))}   F_{t}\left( \frac{\sqrt{2} t- \mathsf{X}_{u}(t)}{\sqrt{t}} \right)   1_{\{ \sqrt{2}s- \mathsf{X}_{u}(s) \geq - K, \forall s \leq t\}} \Lambda(\dif s).
  \end{equation*}

  Put $k_{t}=t^{\gamma}$ for some $\gamma \in (0,1)$, and
  for $0<a<\frac{1}{2}<b<1$ ($\gamma,a,b$ will be chosen later), let
  \begin{align*}
    G^{1}_{t} &:= \left\{ k_{t}^{a} \leq R_K(k_{t}) \leq k_{t}^{b}\right\} \cap \left\{    R_K(s) \geq \log^3 t, s \in [k_{t},t] \right\}  \\
    G^{2}_{t} &:= \left\{  \mathsf{D}^{(K),[k_{t},t]}_{t} \leq e^{-\log^{2}t} \right\}.
    \end{align*}
  We define
  \begin{equation}\label{def-G}
  G_{t}: =    G^{1}_{t} \cap G^{2}_{t}.
  \end{equation}

  Now we describe the road to  \eqref{limit-U^k}. As said before,
   by Lemma \ref{lem-reduce-to-p-moment}, the first moment of $U^{(K)}_{t}$ converges, but it's not trivial to show this is the case for the second moment. A good idea then is to prove that
    $  \mathsf E_{\mathsf{Q}^{(K)}} \left[ U^{(K)}_{t} 1_{G_{t}}  \right] \to \sqrt{2/\pi}$ and   $   \mathsf E_{\mathsf{Q}^{(K)}} \left[ (U^{(K)}_{t})^21_{G_{t}}  \right]\to  2/\pi$ as  $t \to \infty$.
     The first assertion is easy, but the second one is not (at least for the authors). However, using the idea in \cite{AS14} and  thanks to \eqref{eq-conditional-expectation}, we   rewrite $I_{2}(t)$ as
    \begin{equation*}
    I_{2}(t)   =   \mathsf E_{\mathsf{Q}^{(K)}} \left[ U^{(K)}_{t} V^{(K)}_{t}  \right] .
    \end{equation*}
   We expect that the integration of $U^{(K)}_{t} V^{(K)}_{t}  $ on the good event $G_{t}$ has the desired limit $2/\pi$.
   Specifically, we are going to show that
    \begin{equation}\label{eq-bnd-J}
    J(t)  :=   \mathsf E_{\mathsf{Q}^{(K)}} \left[U^{(K)}_{t} V^{(K)}_{t}   1_{G_{t}}  \right]    \overset{t \to \infty}{\longrightarrow}  2 /\pi.
    \end{equation}
     Although this equation does not implies  that  $ \mathsf E_{\mathsf{Q}^{(K)}} \left[ \left(  U^{(K)}_{t}  \right)^{2} 1_{G_{t}}  \right] \to  2 /\pi $, but this is the case if we replace $U^{(K)}_{t}=  \mathsf E_{\mathsf{Q}^{(K)}} \left[ V^{(K)}_{t}   \mid \mathcal{F}_{t} \right]$ by
    \begin{equation*}
      \overline{U}^{(K)}_{t} := \mathsf E_{\mathsf{Q}^{(K)}} \left[ V^{(K)}_{t} 1_{G_{t}}  \mid \mathcal{F}_{t} \right].
    \end{equation*}
    Indeed, observing that $  \overline{U}^{(K)}_{t} \leq U^{(K)}_{t}$,  we have
    \begin{equation}\label{eq-moment-bound-pi}
      \limsup_{t \to \infty} \mathsf E_{\mathsf{Q}^{(K)}} \left[ \left(  \overline{U}^{(K)}_{t}  \right)^{2}  \right]  \leq  \lim_{t \to \infty} \mathsf E_{\mathsf{Q}^{(K)}} \left[    \overline{U}^{(K)}_{t} U^{(K)}_{t} \right]=   \lim_{t \to \infty} \mathsf E_{\mathsf{Q}^{(K)}} \left[      U^{(K)}_{t} V^{(K)}_{t} 1_{G_{t}} \right]= 2 /\pi.
    \end{equation}
    Then we shall show that $      \overline{U}^{(K)}_{t} $ is a good approximation for $U^{(K)}_{t}  $, in the sense that
    \begin{equation}\label{eq-bnd-error}
      \mathsf E_{\mathsf{Q}^{(K)}} \left| U^{(K)}_{t}- \overline{U}^{(K)}_{t}     \right|=  \mathsf E_{\mathsf{Q}^{(K)}} \left( V^{(K)}_{t} 1_{G_{t}^{c}} \right) \overset{t \to \infty}{\longrightarrow}  0.
    \end{equation}
    Firstly this  yields that
     $  \mathsf E_{\mathsf{Q}^{(K)}}\left[    \overline{U}^{(K)}_{t}  \right]  \overset{t \to \infty}{\longrightarrow}  \sqrt{2 / \pi} $ since  $  \mathsf E_{\mathsf{Q}^{(K)}}  \left[  U^{(K)}_{t}  \right]  \overset{t \to \infty}{\longrightarrow}  \sqrt{2 / \pi} $ by Lemma  \ref{lem-reduce-to-p-moment}.
    Combining this with
    \eqref{eq-moment-bound-pi} it follows that $ \overline{U}^{(K)}_{t} $ converges in $L^2(\mathsf{Q}^{(K)})$, and hence in $L^1(\mathsf{Q}^{(K)})$ to $\sqrt{2/\pi}$. Again, using \eqref{eq-bnd-error} we get \eqref{limit-U^k}.
    \begin{equation*}
     \textbf{In conclusion, now it suffices to prove \eqref{eq-bnd-J}   and   \eqref{eq-bnd-error} !}
    \end{equation*}
  Before giving our proof, we state a lemma first:

  \begin{lemma}\label{lem-estimate-good-events}
  Let $k_{t}=t^{\gamma}$ for some $\gamma \in (0,1)$, and let $0<a<\frac{1}{2}<b<1$ such that $k_{t}^{b}\leq t^{\frac{1}{2}-\epsilon_0}$ for some  $\epsilon_0>0$.
  For any fixed $A>0$, the following assertions hold.
  \begin{enumerate}[(i)]
    \item $\lim\limits_{t \to \infty}  \mathsf{Q}^{(K)}(G_{t}) =1 $ and $\lim\limits_{t \to \infty} \inf\limits_{u \in [k_{t}^{a},k_{t}^{b}]}  \mathsf{Q}^{(K)}(G_{t}| R_{K}(k_{t})=u) =1  $.
      \item $\mathsf{Q}^{(K)}\left( \left\{ |R_{K}(t)- r_{t} \sqrt{t}| \leq A \sqrt{t} h_{t} \right\} \cap G_{t}^{c} \right)  \lesssim (r_{t}^2 h_{t}) \log^{3}t\left( (r_{t}\sqrt{t})^{-1} + k_{t}^{-1/2}+ k_{t}^{-3(\frac{1}{2}-a)} \right)$.
  \end{enumerate}
  \end{lemma}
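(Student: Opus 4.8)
Throughout I use that under $\mathsf{Q}^{(K)}$ the process $(R_K(s))_{s\ge 0}$ is a Bessel-$3$ process started at $K$, with transition density \eqref{density-Bessel}, together with two classical facts about Bessel-$3$: the hitting identity $\mathbf{P}^{\mathrm{Bes}}_x(\exists\,s:R_s=\ell)=\ell/x$ for $0<\ell<x$, and the first-passage bound $\mathbf{P}^{\mathrm{Bes}}_x(\tau_\ell\in\dif s)\le \tfrac{\ell}{\sqrt{2\pi}\,s^{3/2}}e^{-(x-\ell)^2/(2s)}\dif s$ for the first passage time $\tau_\ell$ to a lower level $\ell$ (obtained by viewing the Bessel-$3$ process as Brownian motion conditioned to stay positive). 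I will also repeatedly use the elementary consequences of \eqref{density-Bessel} and of the inequality $\tfrac{1-e^{-x}}{x}\le1$ that $p_u(z,w)\le\sqrt{2/\pi}\,w^2u^{-3/2}e^{-(w-z)^2/(2u)}$, that $\mathbf{P}^{\mathrm{Bes}}_x(R_u\le \delta)=\Theta(\delta^3u^{-3/2})$ for $\delta=o(\sqrt u)$, and that $\mathsf E_{\mathsf Q^{(K)}}[1/R_K(u)]=\Theta(u^{-1/2})$. Write $\mathcal I_t:=[r_t\sqrt t-A\sqrt t\,h_t,\;r_t\sqrt t+A\sqrt t\,h_t]$ and $T_t:=\{R_K(t)\in\mathcal I_t\}$; since $r_t+yh_t=\Theta(r_t)$, $r_t\le\bar r$ and $r_t\sqrt t\ge t^{\epsilon}$, the endpoints of $\mathcal I_t$ are $\Theta(r_t\sqrt t)$, $\log^3 t=o(r_t\sqrt t)$, and $\mathsf{Q}^{(K)}(T_t)=\Theta(r_t^2h_t)$.

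\textbf{Part (i).} I bound $\mathsf{Q}^{(K)}((G^1_t)^c)$ and $\mathsf{Q}^{(K)}((G^2_t)^c)$ separately. Since $R_K(k_t)/\sqrt{k_t}$ converges in law (the starting point is $K/\sqrt{k_t}\to0$) and $a<\tfrac12<b$, $\mathsf{Q}^{(K)}(k_t^a\le R_K(k_t)\le k_t^b)\to1$; conditioning at time $k_t$ and using the hitting identity, $\mathsf{Q}^{(K)}(\exists\,s\in[k_t,t]:R_K(s)<\log^3 t)\le \mathsf E_{\mathsf Q^{(K)}}[\log^3 t/R_K(k_t)]+\mathsf{Q}^{(K)}(R_K(k_t)<\log^3 t)=\Theta(\log^3 t\,k_t^{-1/2})\to0$. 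This gives $\mathsf{Q}^{(K)}(G^1_t)\to1$, and applied conditionally on $R_K(k_t)=u\ge k_t^a$ it also gives the uniform version for the first part of $G_t$. For $G^2_t$: conditionally on the spine, each fission off the spine at a time $s$ starts an independent BBM from $\Xi(s)$, and since (without the barrier) $\sum_{u\in\mathsf N_r}(\sqrt2 r-\mathsf X_u(r)+K)e^{-\sqrt2(\sqrt2 r-\mathsf X_u(r))}$ is a martingale, a many-to-one (martingale) computation bounds the expected contribution of that subtree to $\mathsf{D}^{(K)}_t$ by $R_K(s)e^{-\sqrt2(R_K(s)-K)}$; hence $\mathsf E_{\mathsf Q^{(K)}}[\mathsf{D}^{(K),[k_t,t]}_{t}\mid\text{spine}]\le 2\int_{k_t}^t R_K(s)e^{-\sqrt2(R_K(s)-K)}\dif s$, which on $G^1_t$ is at most $2t\log^3 t\,e^{-\sqrt2(\log^3 t-K)}$ because $r\mapsto re^{-\sqrt2 r}$ decreases on $[1/\sqrt2,\infty)$. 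Markov's inequality then gives $\mathsf{Q}^{(K)}((G^2_t)^c\mid\text{spine}\in G^1_t)\le e^{\log^2 t}\cdot2t\log^3 t\,e^{-\sqrt2(\log^3 t-K)}\to0$; being uniform in the spine realization this also handles the conditional statement, completing (i).

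\textbf{Part (ii).} I split $G_t^c\cap T_t$ according to which defining event of $G_t$ fails. On $\{R_K(k_t)<k_t^a\}$, conditioning at $k_t$ and bounding $\sup_z\mathbf{P}^{\mathrm{Bes}}_z(R_{t-k_t}\in\mathcal I_t)\le\sqrt{2/\pi}\,(\sup\mathcal I_t)^2|\mathcal I_t|(t-k_t)^{-3/2}\lesssim r_t^2h_t$ gives $\le \mathsf{Q}^{(K)}(R_K(k_t)<k_t^a)\cdot O(r_t^2h_t)\lesssim k_t^{-3(1/2-a)}r_t^2h_t$, the third term. On $\{R_K(k_t)>k_t^b\}$ a Gaussian tail bound is super-polynomially small (as $b>\tfrac12$), hence negligible. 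On $(G^2_t)^c\cap G^1_t$ we get $\mathsf{Q}^{(K)}(T_t)\lesssim r_t^2h_t$ times the super-polynomially small factor from (i). The remaining, main piece is $\{\exists\,s\in[k_t,t]:R_K(s)<\log^3 t\}\cap T_t$. After peeling off $\{R_K(k_t)<\log^3 t\}\subset\{R_K(k_t)<k_t^a\}$ (already counted), I apply the strong Markov property at $\tau:=\inf\{s\ge k_t:R_K(s)=\log^3 t\}$: conditionally on $R_K(k_t)=z>\log^3 t$, combining the first-passage bound on $[k_t,\tau]$ with the transition-density bound $\int_{\mathcal I_t}p_{t-\sigma}(\log^3 t,w)\dif w\lesssim r_t^2t^{3/2}h_t\,(t-\sigma)^{-3/2}e^{-c r_t^2t/(t-\sigma)}$ on $[\tau,t]$ turns the $\sigma$-integral into a convolution of two kernels of the form $u^{-3/2}e^{-a/u}$; evaluating it via the identity $g_a*g_b=g_{a+b}$ for Brownian hitting-time densities yields
\[
\mathbf{P}^{\mathrm{Bes}}_z(\tau\le t,\ R_K(t)\in\mathcal I_t)\lesssim \log^3 t\cdot r_t^2h_t\Big((r_t\sqrt t)^{-1}+|z-\log^3 t|^{-1}\Big).
\]
Integrating against $p_{k_t}(K,\cdot)$ — handling $z\in(\log^3 t,2\log^3 t]$ crudely via $\mathsf{Q}^{(K)}(R_K(k_t)\le 2\log^3 t)\cdot O(r_t^2h_t)\lesssim(\log^3 t)^3k_t^{-3/2}r_t^2h_t$, and $z>2\log^3 t$ via $|z-\log^3 t|^{-1}\le 2z^{-1}$ with $\mathsf E_{\mathsf Q^{(K)}}[1/R_K(k_t)]=\Theta(k_t^{-1/2})$ — gives $\lesssim r_t^2h_t\log^3 t\big((r_t\sqrt t)^{-1}+k_t^{-1/2}\big)$. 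Summing the four pieces yields the claimed bound.

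\textbf{Main obstacle.} The crux is the last piece of (ii): once the Bessel path has visited the low level $\log^3 t$ one must simultaneously pay the cost of \emph{reaching} it (via the first-passage density, where the naive ``hitting probability $\ell/z$'' estimate is too lossy for $z$ within a bounded factor of $\log^3 t$) and the cost of the \emph{subsequent} bridge climbing back to height $\Theta(r_t\sqrt t)$ in the time that remains. Bounding the post-$\tau$ transition kernel by its maximum over the leftover time decouples these two costs and loses a polynomial power of $r_t$; the resolution is to keep both time integrals coupled and exploit the convolution identity for stable-$\tfrac12$ densities, supplemented by a separate elementary estimate for $z$ near $\log^3 t$ where the resulting bound degenerates.
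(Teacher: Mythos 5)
Your proposal is correct and follows essentially the same route as the paper: the same split of $G_t^c$ according to which of the defining events fails, the same spine/many-to-one conditional-expectation plus Markov bound for $G^2_t$, and the same strong-Markov-at-$\tau$ argument combining the Bessel first-passage density with the transition-density bound for the main term of (ii) (you evaluate the resulting time integral via the stable-$\tfrac12$ convolution identity, the paper computes it directly by substitution — an immaterial difference). The only loose phrasing is bounding $\mathsf{Q}^{(K)}\bigl((G^2_t)^c\cap G^1_t\cap T_t\bigr)$ by "$\mathsf{Q}^{(K)}(T_t)$ times the small factor," which is not literally justified, but the cruder bound $\mathsf{Q}^{(K)}\bigl((G^2_t)^c\cap G^1_t\bigr)=t^{-g_2(t)}$ suffices and is exactly what the paper uses.
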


  It not hard to see that we can choose $\gamma, a ,b$ satisfying the condition in Lemma \ref{lem-estimate-good-events}. Now fix these three parameters.
  Based on Lemma \ref{lem-estimate-good-events},
  whose proof is postponed to the end of this section, we now prove   \eqref{eq-bnd-error} and  \eqref{eq-bnd-J}.

    \begin{proof}[Proof of \eqref{eq-bnd-error}]
    By the definition of $
        V^{(K)}_{t}$
        \begin{equation*}
          \mathsf E_{\mathsf{Q}^{(K)}} \left( V^{(K)}_{t} 1_{G_{t}^{c}} \right)
           =  \frac{\sqrt{t}}{\langle F_{t}, \mu \rangle}   \mathsf {E}_{\mathsf{Q}^{(K)}} \left[  \frac{1}{R_{K}(t) } F_{t}\left( \frac{ R_{K}(t)-K}{\sqrt{t}}\right)   1_{G_{t}^{c}}  \right]
        \end{equation*}
       When $F_{t}\left( \frac{ R_{K}(t)-K}{\sqrt{t}}\right) >0$, using the assumption that $r_{t}+yh_{t}=\Theta(r_{t})$  uniformly in $y \in \mathrm{supp}(G)$, we have $   R_{K}(t) = \Theta(r_{t} \sqrt{t})$.
         Since  $F_{t}$ is bounded and  $\sqrt{t} r_{t} \gg \log^{3}t $,
         applying \eqref{eq-order-of-Ft-mu} and  Lemma \ref{lem-estimate-good-events} (ii),
        \begin{align*}
          \mathsf E_{\mathsf{Q}^{(K)}} \left( V^{(K)}_{t} 1_{G_{t}^{c}} \right)  &\lesssim_{\beta,\sigma^2,\bar{r},G}
           \frac{\sqrt{t}}{ \langle F_{t},\mu\rangle}   \frac{1}{ r_{t} \sqrt{t} }    \  \mathsf{Q}^{(K)}\left( \left\{ |R_{K}(t)- r_{t} \sqrt{t}| \leq 2A \sqrt{t} h_{t} \right\} \cap G_{t}^{c} \right)   \\
          & \lesssim    \frac{1}{r_{t}^2 h_{t} }  \ (r_{t}^2 h_{t}) \log^{3}t\left(\frac{1}{r_{t}\sqrt{t}}  + \frac{1}{k_{t}^{1/2}} + \frac{1}{k_{t}^{3/4}}\right) .
        \end{align*}
         Since by our assumption $r_{t}\sqrt{t} \gg \log^{3}(t)$
         and $k_{t}=t^{\gamma}$,
         we get that $ \mathsf E_{\mathsf{Q}^{(K)}} \left( V^{(K)}_{t} 1_{G_{t}^{c}} \right) \overset{t \to \infty}{\longrightarrow}  0$ as desired.
      \end{proof}

       \begin{proof}[Proof of \eqref{eq-bnd-J}]
       Recall that
        \begin{equation*}
          J(t)=   \mathsf E_{\mathsf{Q}^{(K)}} \left[U^{(K)}_{t} V^{(K)}_{t}   1_{G_{t}}  \right] = \left(  \frac{\sqrt{t}}{\langle F_{t}, \mu \rangle}   \right)^{2}  \mathsf E_{\mathsf{Q}^{(K)}} \left[   \frac{\mathsf{W}^{(K),F}_{t} }{\mathsf{D}^{(K)}_{t}} \,  \frac{1}{R_{K}(t)} F_{t}\left( \frac{ R_{K}(t)-K}{\sqrt{t}}\right)  1_{G_{t}}  \right] .
        \end{equation*}
     It suffices to show that $\limsup\limits_{t \to \infty} J(t) \leq 2/\pi$, because similar to   \eqref{eq-moment-bound-pi}, $\liminf\limits_{t \to \infty} J(t) \geq       \liminf\limits_{t \to \infty} \mathsf E_{\mathsf{Q}^{(K)}} \left[ \left(  \overline{U}^{(K)}_{t}  \right)^{2}  \right] $ and
      by   \eqref{eq-bnd-error}  and   Lemma  \ref{lem-reduce-to-p-moment}, $     \liminf\limits_{t \to \infty} \mathsf E_{\mathsf{Q}^{(K)}} \left[ \left(  \overline{U}^{(K)}_{t}  \right)^{2}  \right] \geq 2/\pi $. The rest of the proof is divided into four steps:
         \begin{itemize}
          \item We claim that $  J^{(1)}(t):= \left(  \frac{\sqrt{t}}{\langle F_{t}, \mu \rangle}   \right)^{2}     \mathsf E_{\mathsf{Q}^{(K)}} \left[   \frac{\mathsf{W}^{(K),F,[k_{t},t]}_{t} }{\mathsf{D}^{(K)}_{t}} \,  \frac{1}{R_{K}(t)} F_{t}\left( \frac{ R_{K}(t)-K}{\sqrt{t}}\right)  1_{G_{t}}  \right]  = o(1)$. To see this, noting that
          $F_{t}\left( \frac{\sqrt{2}t-X_{u}(t)}{\sqrt{t}}\right) >0$ only if   $\sqrt{2}t-X_{u}(t)=\Theta(r_{t}\sqrt{t})$, we have  $
            W_t^{(K),F,[c_{1},c_{2}]} \leq   \mathsf{D}_t^{(K), [c_{1},c_{2}]}$.
          So on $G_{t}$, we have $ \mathsf{W}^{(K),F,[k_{t},t]}_{t} \leq \mathsf{D}^{(K),[k_{t},t]}_{t} \leq  e^{-\log^2 t}$, and $\frac{1}{R_{K}(t)} F_{t}( \frac{ R_{K}(t)-K}{\sqrt{t}}) \leq 1$.
          Hence by \eqref{eq-order-of-Ft-mu} and \eqref{eq-DK-change-of-measure},
          $ J^{(1)}(t) \leq (\sqrt{t}r_{t}^{-1}h_{t}^{-1})^{2}  e^{- \log^2 t} =o(1) $ by \eqref{eq-order-of-Ft-mu}.

          \item Let $J^{(2)}(t) = \left(  \frac{\sqrt{t}}{\langle F_{t}, \mu \rangle}   \right)^{2}     \mathsf E_{\mathsf{Q}^{(K)}} \left[    \frac{\mathsf{W}^{(K),F,[0,k_{t})}_{t} }{\mathsf{D}^{(K),[0,k_{t})}_{t}}   \frac{1}{R_{K}(t)} F_{t}\left( \frac{ R_{K}(t)-K}{\sqrt{t}}\right)  1_{G_{t}}  \right] $.
         Using   that $\frac{1 }{\mathsf{D}^{(K)}_{t}} \leq \frac{1 }{\mathsf{D}^{(K),[0,k_{t})}_{t} } $, we have $J(t) \leq J^{(1)}(t)+ J^{(2)}(t)$. Then by   the branching Markov property, we get
           \begin{align*}
            J^{(2)}(t) \leq  \left(  \frac{\sqrt{t}}{\langle F_{t}, \mu \rangle}   \right)^{2}   \mathsf E_{\mathsf{Q}^{(K)}} \left[  \frac{\mathsf{W}^{(K),F,[0,k_{t})}_{t} }{\mathsf{D}^{(K),[0,k_{t})}_{t}}   1_{\{R_{K}(k_{t}) \in [k_{t}^{a},k_{t}^{b}] \} }\right]
            \sup_{u \in [k_{t}^{a},k_{t}^{b}]}
            \mathbf{E}^{\mathrm{Bes}}_{u}
            \left[  \frac{F_{t} (\frac{ R_{t-k_{t}}-K}{\sqrt{t}})}{R_{t-k_{t}}} \right].
           \end{align*}
        By doing the same  computation
        as in
        \eqref{eq-expectation-Bes-0}, we have uniformly for $ u \in [k_{t}^{a},k_{t}^{b}]$, $
            \mathbf{E}^{\mathrm{Bes}}_{u} \left[  \frac{F_{t}( \frac{ R_{t-k_{t}}-K}{\sqrt{t}})}{R_{t-k_{t}}}  \right]
             \sim   \sqrt{\frac{2}{\pi}} \, \frac{ \langle F_{t}, \mu \rangle}{\sqrt{t}}$.
      And this yields that
           \begin{equation}\label{eq-limsupJ(t)-1}
           \limsup_{t \to \infty} J(t)
            \leq  \sqrt{\frac{2}{\pi}} \limsup_{t \to \infty }  \mathsf E_{\mathsf{Q}^{(K)}} \left[    \frac{\sqrt{t}}{\langle F_{t}, \mu \rangle}   \frac{\mathsf{W}^{(K),F,[0,k_{t})}_{t} }{\mathsf{D}^{(K),[0,k_{t})}_{t}}  1_{\{R_{K}(k_{t}) \in [k_{t}^{a},k_{t}^{b}] \} }\right].
           \end{equation}
           \item We claim that
           \begin{equation}\label{eq-limsupJ(t)-2}
            \limsup_{t \to \infty} J(t) \leq  \sqrt{\frac{2}{\pi}} \limsup_{t \to \infty }  \mathsf E_{\mathsf{Q}^{(K)}} \left[    \frac{\sqrt{t}}{\langle F_{t}, \mu \rangle}   \frac{\mathsf{W}^{(K),F,[0,k_{t})}_{t} }{\mathsf{D}^{(K),[0,k_{t})}_{t}}  1_{G_{t}} \right].
           \end{equation}
        To see this, using the branching Markov property   and part (i) of  Lemma \ref{lem-estimate-good-events}, we have
           \begin{align*}
           &\mathsf E_{\mathsf{Q}^{(K)}} \left[    \frac{\mathsf{W}^{(K),F,[0,k_{t})}_{t} }{\mathsf{D}^{(K),[0,k_{t})}_{t}} 1_{G_{t}}\right]   \\
            & \geq \mathsf E_{\mathsf{Q}^{(K)}} \left[    \frac{\mathsf{W}^{(K),F,[0,k_{t})}_{t} }{\mathsf{D}^{(K),[0,k_{t})}_{t}} 1_{\{R_{K}(k_{t}) \in [k_{t}^{a},k_{t}^{b}] \} }\right] \, \inf_{u \in [k_{t}^{a},k_{t}^{b}]}  \mathsf{Q}^{(K)}(G_{t}| R_{K}(k_{t})=u) \\
            & =  (1-o(1))\mathsf E_{\mathsf{Q}^{(K)}} \left[   \frac{\mathsf{W}^{(K),F,[0,k_{t})}_{t} }{\mathsf{D}^{(K),[0,k_{t})}_{t}}  1_{\{R_{K}(k_{t}) \in [k_{t}^{a},k_{t}^{b}] \} }\right] .
           \end{align*}
           Substituting into \eqref{eq-limsupJ(t)-1},  the inequality \eqref{eq-limsupJ(t)-2} follows.

            \item  By  \eqref{eq-limsupJ(t)-2}, we have  $\limsup\limits_{t \to \infty} J(t) \leq \sqrt{\frac{2}{\pi}} \limsup\limits_{t \to \infty}  J^{(3)}(t) $, where $
               J^{(3)}(t):=  \frac{\sqrt{t}} { \langle F_{t}, \mu \rangle} \mathsf E_{\mathsf{Q}^{(K)}} \left[ \frac{\mathsf{W}^{(K),F,[0,k_{t})}_{t} }{\mathsf{D}^{(K),[0,k_{t})}_{t} } 1_{G_{t}}  \right] $.
           So it suffices to show that $J^{(3)}(t)= (1+o(1)) \sqrt{\frac{2}{\pi}}$. By assumption, there is a constant $q>0$ such that $t^{-q} \lesssim h_{t}$ for large $t$. Set $\eta := q+2$. Then note that
        \begin{equation}\label{eq-J-3-1}
       J^{(3)}(t)
        \leq   \frac{\sqrt{t}} { \langle F_{t}, \mu \rangle} \mathsf E_{\mathsf{Q}^{(K)}} \left[ \frac{\mathsf{W}^{(K),F,[0,k_{t})}_{t} }{\mathsf{D}^{(K),[0,k_{t})}_{t} } 1_{G_{t} \cap \{ \mathsf{D}^{(K)}_{t} \geq   t^{-\eta}  \}}  \right] +  \frac{\sqrt{t}} { \langle F_{t}, \mu \rangle} \mathsf{Q}^{(K)}( \mathsf{D}^{(K)}_{t} <  t^{-\eta}   ).
        \end{equation}
        On the one hand,  the Markov  inequality and \eqref{eq-order-of-Ft-mu} yields
        \begin{equation}\label{eq-J-3-2}
         \frac{\sqrt{t}} { \langle F_{t}, \mu \rangle}  \mathsf{Q}^{(K)}( \mathsf{D}^{(K)}_{t} < t^{-\eta}  ) =  \frac{\sqrt{t}} { \langle F_{t}, \mu \rangle} \mathsf{Q}^{(K)}\left(  \frac{1}{\mathsf{D}^{(K)}_{t}}  >  t^{\eta}   \right) \lesssim \frac{\sqrt{t}}{t^{\eta}r_{t}h_{t} } \mathsf{E}_{\mathsf{Q}^{(K)}}\left( \frac{1}{\mathsf{D}^{(K)}_{t}}   \right) = o(1) .
        \end{equation}
        On the other hand,  on $G_{t} \cap \{  \mathsf{D}^{(K)}_{t} \geq  t^{-\eta} \} $, we have $\mathsf{D}^{(K),[k_{t},t]}_{t} \leq e^{-\log^2 t}\leq  \frac{ 1}{t}\mathsf{D}^{(K)}_{t}$. Then $ \mathsf{D}^{(K),[0,k_{t})}_{t} \geq (1-t^{-1}) \mathsf{D}^{(K)}_{t}$ and hence
        \begin{equation} \label{eq-J-3-3}
        \begin{aligned}
        & \frac{\sqrt{t}} { \langle F_{t}, \mu \rangle}  \mathsf E_{\mathsf{Q}^{(K)}}\left[ \frac{\mathsf{W}^{(K),F,[0,k_{t})}_{t} }{\mathsf{D}^{(K),[0,k_{t})}_{t} } 1_{G_{t} \cap \{ \mathsf{D}^{(K)}_{t} \geq  t^{-\eta} \}}  \right]
        \leq \frac{1}{1-\frac{1}{t} }   \mathsf E_{\mathsf{Q}^{(K)}} \left[ \frac{\sqrt{t}} { \langle F_{t}, \mu \rangle}  \frac{\mathsf{W}^{(K),F{\color{red},}[0,k_{t})}_{t} }{\mathsf{D}^{(K)}_{t} }   \right]\\
        & \leq  \frac{1}{1-\frac{1}{t}}   \mathsf E_{\mathsf{Q}^{(K)}} \left[ \frac{\sqrt{t}} { \langle F_{t}, \mu \rangle}  \frac{\mathsf{W}^{(K),F}_{t} }{\mathsf{D}^{(K)}_{t} }   \right]= (1+o(1)) \sqrt{\frac{2}{\pi}},
        \end{aligned}
        \end{equation}
        where the last  equality follows from
        \eqref{eq-1st-moment-convergence}.
        Combining  \eqref{eq-J-3-1},  \eqref{eq-J-3-2} and \eqref{eq-J-3-3},  we finally get $J^{(3)}(t)= (1+o(1)) \sqrt{\frac{2}{\pi}}$.
         \end{itemize}
        We now complete the proof.
        \end{proof}

  In the end, we give the proof of Lemma \ref{lem-estimate-good-events}.

\begin{proof}[Proof of Lemma \ref{lem-estimate-good-events}]
 Firstly, we show that $\lim\limits_{t\to\infty} \mathsf{Q}^{(K)}(G^{1}_{t})=1$. In fact, by the Markov property,
  \begin{equation*}
    \mathsf{Q}^{(K)}(G^{1}_{t}) \geq  \mathbf{P}^{\mathrm{Bes}}_{K}\left( R_{K}(k_{t}) \in [k_{t}^{a}, k_{t}^{b}] \right) \inf_{u \in [k_{t}^{a}, k_{t}^{b}] }\mathbf{P}^{\mathrm{Bes}}_{u}\left(  R(s) \geq \log^3 t , \,\forall s \in [0,t-k_{t}] \right).
  \end{equation*}
  The following  estimates \eqref{eq-Bessel-estimate-1} and \eqref{eq-Bessel-estimate-2}  tell us  that $\lim\limits_{t\to\infty}\mathbf{P}^{\mathrm{Bes}}_{K}\left( R_{K}(k_{t}) \in [k_{t}^{a}, k_{t}^{b}] \right)=1.$ Then  combining  with \eqref{eq-Bessel-estimate-3} below, we get  $\lim\limits_{t\to\infty} \mathbb{Q}^{(K)}(G^{1}_{t})=1$.

  \begin{itemize}
    \item Given some small constant $\epsilon_0$, for fixed $a<\frac{1}{2}<b$, using \eqref{density-Bessel} and the fact  $1-e^{-y} \leq y$ for all $y>0$, we have
    uniformly for  $x\leq s^{\frac{1}{2}-\epsilon_0}$,
  \begin{equation}\label{eq-Bessel-estimate-1}
      \mathbf{P}^{\mathrm{Bes}}_{x}(R_{s} \leq s^a )=\mathbf{P}^{\mathrm{Bes}}_{x/\sqrt{s}}(R_{1} \leq s^{a-1/2}  )  \lesssim  \int_{0}^{s^{a-\frac{1}{2}}} e^{-\frac{(z-x/\sqrt{s})^2}{2}}  z^2   \dif z  \lesssim \Theta( s^{-3(\frac{1}{2}-a)} );
    \end{equation}
    and
    \begin{equation}\label{eq-Bessel-estimate-2}
        \mathbf{P}^{\mathrm{Bes}}_{x}(R_{s} \geq s^b )=\mathbf{P}^{\mathrm{Bes}}_{x/\sqrt{s}}(R_{1} \geq s^{b-1/2}  ) \lesssim \int_{s^{b-\frac{1}{2}}}^{\infty} e^{-\frac{(z-x/\sqrt{s})^2}{2}}  z^2    \dif z = s^{-g_1(s)},
      \end{equation}
      where $g_1$ satisfies that $\lim_{s\to\infty}g_1(s)=\infty.$
  \item For $  u \in[k_{t}^{a}, k_{t}^{b}]$, let $\tau=\inf\{s>0: R_{s} < \log^3 t \}$.
  By  the hitting probability of  $3$-dim Brownian motion
  (see e.g. \cite[(9.1.5)]{Durrett19}), we have
  \begin{equation}\label{eq-Bessel-estimate-3}
    \begin{aligned}
  & \mathbf{P}^{\mathrm{Bes}}_{u}\left(  R(s) \geq \log^3 t , \forall s \in [0,t] \right)    \geq  \mathbf{P}^{\mathrm{Bes}}_{u}(\tau = \infty   ) = 1- \frac{\log^3 t}{u}.
    \end{aligned}
  \end{equation}
  \end{itemize}

  Secondly, we show that
   $\mathsf{Q}^{(K)}( G^{1}_{t} \backslash G^{2}_{t}) \to 0$.
  Let $\mathcal{G}_{\infty}$ be  the sigma-algebra generated by
    the spacial motion of the spine $(\Xi(t))_{t\geq 0}$ and  the Poisson point process $\Lambda$  which represents the birth times along the
  spine.
  We know that, under $\mathsf{Q}^{(K)}$, given $\mathcal{G}_{\infty}$,   the processes $\left( \{ \mathsf{X}_{u}(t): u \in \mathsf{N}^{\xi_{s}}_{t}, t \geq s \}, s \in  \mathrm{supp}(\Lambda)\right)$ are independent BBMs starting from $\Xi(s)$. Therefore,
  \begin{equation*}
    \begin{aligned}
    \mathsf{E}_{\mathsf{Q}^{(K)}}[ \mathsf{D}^{(K), [k_t,t]}_{t} | \mathcal{G}_{\infty} ] & \leq   \int_{[k_{t},t]} \mathsf{E}_{\mathsf{Q}^{(K)}} \bigg[ \sum_{u \in \mathsf{N}^{\xi_{s}}_{t}} (\sqrt{2}t-\mathsf{X}_{u}(t)+K) e^{-\sqrt{2}(\sqrt{2}t-\mathsf{X}_{u}(t))}    | \mathcal{G}_{\infty} \bigg] \Lambda(\dif s) \\
    &=\int_{[k_{t},t]}(\sqrt{2} s-\Xi(s)+K) e^{-\sqrt{2}(\sqrt{2}s-\Xi(s))} \Lambda(\dif s).
    \end{aligned}
    \end{equation*}
  By the definition of $G^{1}_{t}$, we have
  $\mathsf{E}_{\mathsf{Q}^{(K)}}[ \mathsf{D}^{(K), [k_t,t]}_{t} | \mathcal{G}_{\infty} ] 1_{G^{1}_{t}} \leq     e^{- \sqrt{2} (\log^3 t-K)} \int_{k_{t}}^{t} R_{K}(s) \Lambda(\dif s) $.
  By the Markov inequality, we get
  \begin{equation}\label{eq-G1-G2}
  \begin{aligned}
  &  \mathsf{Q}^{(K)}\left(G^{1}_{t} \cap \{ \mathsf{D}^{(K), F,[k_{t},t]}_{t} \geq e^{-\log^2 t}\} \right)
  \leq \mathsf{E}_{\mathsf{Q}^{(K)}}\left[ e^{\log^2 t}  \mathsf{D}^{(K), F,[k_{t},t]}_{t} 1_{G^{1}_{t}}    \right]\\
  & = e^{\log^2 t} \mathsf{E}_{\mathsf{Q}^{(K)}}\left(   \mathsf{E}_{\mathsf{Q}^{(K)}}[ \mathsf{D}^{(K), F,[k_{t},t]}_{t} | \mathcal{G}_{\infty} ]  1_{G^{1}_{t}}   \right) \lesssim   e^{-\sqrt{2}\log^3 t+\log^2 t}  \mathsf{E}_{\mathsf{Q}^{(K)}}\left[ \int_{k_{t}}^{t} R_{K}(s)   \dif s   \right]=
   t^{-g_2(t)},
  \end{aligned}
  \end{equation}
  where $g_2$ satisfies that $\lim_{t\to\infty}g_2(t)=\infty$, and
   we used the fact that  $\Lambda$ and $R_{K}$ are independent under $\mathsf{Q}^{(K)}$,
  and
  $\mathsf{E}_{\mathsf{Q}^{(K)}}[ \int_{k_{t}}^{t} R_{K}(s)  \Lambda (\dif s)   ] =  2 \mathsf{E}_{\mathsf{Q}^{(K)}}[ \int_{k_{t}}^{t} R_{K}(s)   \dif s   ]$.
  Therefore, $\mathsf{Q}^{(K)}( G^{1}_{t} \backslash G^{2}_{t}) \to 0$, and then  $\lim_{t\to\infty} \mathsf{Q}^{(K)}(G_{t})=1$.

  Thirdly, the same computations as in \eqref{eq-G1-G2}  yields
  \begin{equation*}
  \begin{aligned}
    &\mathsf{Q}^{(K)}( G^{1}_{t} \backslash G^{2}_{t} | R_{K}(k_{t})=u) =   \mathsf{Q}^{(K)}\left(G^{1}_{t}  \cap \{ \mathsf{D}^{(K), F,[k_{t},t]}_{t} \geq e^{-\log^2 t}\}   | R_{K}(k_{t})=u\right) \\
   & \leq  e^{-\sqrt{2}  \log^3 t+\log^2 t}   \mathsf{E}_{\mathsf{Q}^{(K)}}\left[ \int_{k_{t}}^{t} R_{K}(s)   \dif s | R(k_{t})= u   \right]
    \to 0 \,,  \text{ uniformly in } u \leq k_{t}^{b}.
  \end{aligned}
  \end{equation*}
  Also  \eqref{eq-Bessel-estimate-3} implies that  $  \inf\limits_{u \in [k_{t}^{a},k_{t}^{b}]} \mathsf{Q}^{(K)}( G^{1}_{t}| R_{K}(k_{t}) = u)  \to 1$. Thus Lemma \ref{lem-estimate-good-events}(i) follows.

  Finally, we give the upper bound of $\mathsf{Q}^{(K)}\left( \left\{ |R_{K}(t)- r_{t} \sqrt{t}| \leq A \sqrt{t} h_{t} \right\} \backslash G_{t} \right)$.
  Recall that in \eqref{eq-G1-G2} we have shown that $\mathsf{Q}^{(K)}( G^{1}_{t} \backslash G^{2}_{t} ) = t^{-g_2(t)}$ with  $g_2(t) \to \infty$.
  So it suffices to consider
   $\mathsf{Q}^{(K)}\left( \left\{ |R_{K}(t)- r_{t} \sqrt{t}| \leq A \sqrt{t} h_{t} \right\} \backslash G^{1}_{t}  \right) $.

  \begin{itemize}
  \item  The estimates \eqref{eq-Bessel-estimate-1} and  \eqref{eq-Bessel-estimate-2}
  imply that
  \begin{equation}\label{B5}
    \begin{aligned}
  & \mathsf{Q}^{(K)}\left( R_{K}(k_{t})  \notin [k_{t}^{a}, k_{t}^{b}] ,     |R_{K}(t)- r_{t} \sqrt{t}| \leq A \sqrt{t} h_{t}   \right) \\
  & \leq  \mathbf{P}^{\mathrm{Bes}}_{K}\left( R(k_{t})  \leq k_{t}^{a}\right) \sup_{u \leq k_{t}^{a}}\mathbf{P}^{\mathrm{Bes}}_{u}\left(  |R(t)- r_{t} \sqrt{t}| \leq A \sqrt{t} h_{t}  \right) +   \mathbf{P}^{\mathrm{Bes}}_{K}\left( R(k_{t})  \geq k_{t}^{b} \right) \\
  & \lesssim   \mathbf{P}^{\mathrm{Bes}}_{K}\left( R(k_{t})  \leq k_{t}^{a}\right)
  \sup_{x \leq k_{t}^{a}/\sqrt{t}}\mathbf{P}^{\mathrm{Bes}}_{x} \left(  \left|R_{1}- r_{t}\right| \leq A h_{t} \right) +
  k_{t}^{-g_1(k_t)} \lesssim  k_{t}^{-3 (\frac{1}{2}-a)}  (r_{t}^2 h_{t}) .
   \end{aligned}
  \end{equation}

\item
 Note that by the Markov property,
   \begin{equation*}
  \begin{aligned}
      & \mathsf{Q}^{(K)}\left(  \{ R_{K}(k_{t}) \in  [k_{t}^{a},k_{t}^{b}],  |R_{K}(t)- r_{t} \sqrt{t}| \leq A \sqrt{t} h_{t}   \} \backslash G^{1}_{t} \right) \\
     &= \int_{k_{t}^{a}}^{k_{t}^{b}} \mathbf{P}^{\mathrm{Bes}}_{K}(R(k_{t}) \in \dif u ) \mathbf{P}^{\mathrm{Bes}}_{u}\left( \tau <t-k_{t}; |R_{t-k_{t}} -r_{t} \sqrt{t}|\leq A \sqrt{t}h_{t}\right),
  \end{aligned}
  \end{equation*}
 where $\tau=\inf\{s>0: R_{s} < \log^3 t \}$.
It is known that $(\tau,   \mathbf{P}^{\mathrm{Bes}}_{u})$ has a probability density function  (see e.g., \cite[Page 339, 2.0.2]{BS96})
\begin{equation*}
p_{\tau}(s;u)=\frac{\log^3t}{u} \frac{(u-\log^{3} t)}{\sqrt{2 \pi}s^{3/2}} \exp\left\{ - \frac{(u-\log^{3}t)^2}{2s} \right\} \dif s  \lesssim \frac{\log^3t}{s^{3/2}}
  \exp\left\{ - \frac{u^2}{4s}\right\} \dif s.
\end{equation*}
Let $t'=t-k_{t}$. Using the strong Markov property, $\mathbf{P}^{\mathrm{Bes}}_{u}\left(\tau < t'; |R_{t'}- r_{t} \sqrt{t}| \leq A \sqrt{t} h_{t} \right)$ equals
    \begin{equation*}
    \begin{aligned}
    &    \int_{0}^{t'} \mathbf{P}^{\mathrm{Bes}}_{\log^{3}t} \left(  \left|R_{s}- r_{t}\sqrt{t}\right| \leq A h_{t} \sqrt{t}\right)   p_{\tau}(t'-s;u)\dif s \\
    & \lesssim   \int_{0}^{t'} r_{t}^2 \frac{t}{s}  e^{-c r_{t}^2 \frac{t'}{s} } h_{t} \sqrt{\frac{t}{s}}  \, \frac{\log^3t}{(t'-s)^{3/2}}   e^{ - \frac{u^2}{4(t'-s)} } \dif s  \\
    &\lesssim (r_{t}^2 h_{t}) \frac{\log^{3}t}{\sqrt{t}} \int_{0}^{1}
   \frac{1}{[\lambda(1-\lambda)]^{3/2}}
      e^{-c r_{t}^2 \frac{1}{\lambda} }  e^{- \frac{u^2}{4t'} \frac{1}{1-\lambda} }
    \dif \lambda
     \lesssim  r_{t}^2 h_{t} \log^{3}t   \left( \frac{1}{r_{t}\sqrt{t}} + \frac{1}{u} \right),
      \end{aligned}
    \end{equation*}
 where  in the first inequality we used the domination
   $e^{-(r_{t}\sqrt{t}-Ah_{t}\sqrt{t}-\log^{3}t)^2/2s} \leq e^{-cr_{t}^{2}t'/s}$
 with $c>0$ being some  constant
   and in the third inequality we used the domination
    $\int_{0}^{1} \lambda^{-3/2} e^{-cr_{t}^2/\lambda} \dif \lambda  \leq   \int_{0}^{\infty}  \frac{1}{r_{t}\sqrt{\eta}} e^{-c \eta} \dif \eta \lesssim \frac{1}{r_{t}}$. Thus
  \begin{equation}\label{B6}
  \begin{aligned}
    & \mathsf{Q}^{(K)}\left(  \left\{ R_{K}(k_{t}) \in  [k_{t}^{a},k_{t}^{b}],   |R_{K}(t)- r_{t} \sqrt{t}| \leq A\sqrt{t} h_{t}  \right\} \backslash G^{1}_{t} \right) \\
    & \lesssim    r_{t}^2 h_{t} \log^{3}t  \int_{k_{t}^{a}}^{k_{t}^{b}}  \left( \frac{1}{r_{t}\sqrt{t}} + \frac{1}{u} \right)   \mathbf{P}^{\mathrm{Bes}}_{K}(R_{k_{t}} \in \dif u )  \leq  r_{t}^2 h_{t} \log^{3}t \left(\frac{1}{r_{t}\sqrt{t}} +  \frac{1}{k_{t}^{1/2}}\right).
  \end{aligned}
  \end{equation}
  \end{itemize}
  Combining \eqref{B5} and \eqref{B6}, we get Lemma \ref{lem-estimate-good-events}(ii).
  \end{proof}

 \begin{appendix}
 \section*{Appendix: Proof of Lemma \ref{lem-bridge-estimate}}
\begin{proof}
Denote by $\mathrm{Pr}$ the probability in the left hand side of \eqref{eq-bridge-estimate}.
Noting that the Gaussian process $( \sigma B_{r} - \frac{\sigma^2 r}{\sigma^2 s + t-s}[\sigma B_{s}+B_{t}-B_{s}] )_{r \in [0,s]}$ is independent of $\sigma B_{s}+B_{t}-B_{s} $ (checking their covariance), we have
      \begin{align*}
       \mathrm{Pr}
       &= \mathbf{P}\left( \sigma B_{r} - \frac{\sigma^2 r}{\sigma^2 s + t-s}(\sigma B_{s}+B_{t}-B_{s}) \leq vr - \frac{\sigma^2 r}{\sigma^2 s + (t-s)}(\widetilde{m}_{t} + x ) + K\,,\forall r \leq s   \right)  \\
       &= \mathbf{P}\left( \sigma B_{r} - \frac{r}{s}\sigma B_{s}   \leq  \bar{Z} +   vr - \frac{\sigma^2 r}{\sigma^2 s + (t-s)}(\widetilde{m}_{t} + x ) + K\,,\forall r \leq s   \right) \,,
      \end{align*}
    where $ \bar{Z} := \frac{\sigma^2 r}{\sigma^2 s + t-s}(\sigma B_{s}+B_{t}-B_{s})  - \frac{r}{s}\sigma B_{s}$.
      Simple computation yields
      \begin{equation*}
        vr - \frac{\sigma^2 r}{\sigma^2 s + (t-s)}(\widetilde{m}_{t} + x ) =   \frac{  (1-\sigma^2)(t-s)v + \sigma^2(w_{t}-x)}{\sigma^2 s + (t-s)}  r .
      \end{equation*}
        Let $Z :=   \frac{\sigma^2 s+(t-s)}{r} \bar{Z} = \sigma^2(B_{t}-B_{s}) -\frac{t-s}{s} \sigma B_{s} $. We now have
      \begin{align*}
      \mathrm{Pr}=\mathbf{P}\left( \sigma B_{r} - \frac{r}{s}\sigma B_{s}\leq \frac{Z+ (1-\sigma^2)(t-s)v + \sigma^2(w_{t}-x)}{\sigma^2 s + (t-s)}  r + K\,,\forall r \leq s   \right).
      \end{align*}

 We bound the probability that $Z$ is large.
 Observe that $Z$ is a  Gaussian and  $\mathrm{Var}(Z)=\sigma^2(t-s) ( \sigma^2+\frac{t-s}{s}) \leq 2 (t-s) $ as $\sigma^2\leq 1$.
Applying the Gaussian tail bound,  we have for  large $t$,   $ \mathbf{P}( |Z|> 10  (t-s + \log t) )   \leq \frac{1}{t^2}$  for all $s \in [t-\sqrt{t}\log t, t]$.
      Moreover, $Z$ is  independent of the Brownian bridge $(B_{r}-\frac{r}{s}B_{s})_{r \leq s}$.
   By formula of total probability with partition  $\{ Z\leq  10  (t-s + \log t) \}$ and its complement, using again $\sigma \leq 1$, we have
    \begin{align*}
      \mathrm{Pr}
         \leq &\mathbf{P}\left( \sigma B_{r} - \frac{r}{s}\sigma B_{s}\leq \frac{  (10+v) (t-s    +   w_{t}+|x|)}{\sigma^2 s + (t-s)}  r + K\,,\forall r \leq s   \right) + \frac{1}{t^2}\\
            \leq & 3 \left[\frac{  (10+v) (t-s    +   w_{t}+|x|)s}{\sigma(\sigma^2 s + (t-s))}  +\frac{K}{\sigma}\right]\frac{K}{\sigma}  + \frac{1}{t^2} \lesssim_{K,\beta,\sigma}  \frac{t-s + w_{t}  +|x|}{t},
  \end{align*}
  where in   the second inequality we used Lemma \ref{lem-bridge-estimate-0}.
   \end{proof}
 \end{appendix}
%
%

\begin{acks}[Acknowledgments]
 We thank Bastien Mallein for helpful discussions, Renming Song for valuable suggestions, Haojie Hou and Fan Yang for their careful reading of the manuscript.
 We also thank
 the referees for their valuable comments and corrections, as well as for suggesting a simpler proof of Proposition \ref{lem-functional-convergence-derivative-martingale}. 
\end{acks}
\begin{funding}
%
The research of this project was supported  by the National Key R\&D Program of China (No. 2020YFA0712900).
The second author was supported by NSFC (Grant Nos. 12071011 and 12231002) and  the Fundamental Research Funds for Central Universities, Peking University LMEQF.
\end{funding}



\begin{thebibliography}{10}

    \bibitem{Acosta14}
    J.~Acosta.
    \newblock Tightness of the recentered maximum of log-correlated {Gaussian}
      fields.
    \newblock {\em Electron. J. Probab.}, 19, no. 90, 2014.


    \bibitem{Aidekon13}
    E. A{\"{\i}}d{\'e}kon.
    \newblock Convergence in law of the minimum of a branching random walk.
    \newblock {\em Ann. Probab.}, 41(3A):1362--1426, 2013.

    \bibitem{ABBS13}
    E. A{\"\i}d{\'e}kon, J.~Berestycki, {\'E}.~Brunet, and Z.~Shi.
    \newblock Branching {B}rownian motion seen from its tip.
    \newblock {\em Probab. Theory Relat. Fields}, 157(1):405--451, 2013.

    \bibitem{AS14}
    E. Aidekon and Z.~Shi.
    \newblock The {Seneta}-{Heyde} scaling for the branching random walk.
    \newblock {\em Ann. Probab.}, 42(3):959--993, 2014.

    \bibitem{Arguin16}
    L.-P. Arguin.
    \newblock {\em Extrema of log-correlated random variables principles and examples.}
    \newblock {\em Advances in disordered systems, random processes and some applications},  pages 166--204, Cambridge Univ. Press, Cambridge, 2017.


    \bibitem{ABBRS19}
    L.-P. Arguin, D.~Belius, P.~Bourgade, M.~Radziwi{\l}{\l}, and K.~Soundararajan.
    \newblock Maximum of the Riemann zeta function on a short interval of the
      critical line.
    \newblock {\em Comm.  Pure  Appl. Math.}, 72(3):500--535,
      2019.

    \bibitem{ABH17}
    L.-P. Arguin, D.~Belius, and A.~J. Harper.
    \newblock Maxima of a randomized {R}iemann zeta function, and branching random walks.
    \newblock {\em Ann. Appl. Probab.}, 27(1):178--215, 2017.

    \bibitem{ABK12}
    L.-P. Arguin, A.~Bovier, and N.~Kistler.
    \newblock Poissonian statistics in the extremal process of branching {Brownian}
      motion.
    \newblock {\em Ann. Appl. Probab.}, 22(4):1693--1711, 2012.

    \bibitem{ABK13}
    L.-P. Arguin, A.~Bovier, and N.~Kistler.
    \newblock The extremal process of branching {B}rownian motion.
    \newblock {\em Probab. Theory Relat. Fields}, 157(3):535--574, 2013.

    \bibitem{ADH21}
    L.-P. Arguin, G.~Dubach, and L.~Hartung.
    \newblock Maxima of a random model of the {R}iemann zeta function over intervals
      of varying length.
    \newblock {\em  arXiv:2103.04817}, 2021.

    \bibitem{BK22}
    E. C. Bailey and J.~P. Keating.
    \newblock Maxima of log-correlated fields: some recent developments.
    \newblock {\em J. Phys. A.}, 55(5), no. 053001, 2022.



    \bibitem{Belloum22}
    M. A. Belloum.
    \newblock The extremal process of a cascading family of branching brownian
      motion.
    \newblock {\em arXiv:2202.01584}, 2022.

    \bibitem{BM21}
    M. A. Belloum and B.~Mallein.
    \newblock {Anomalous spreading in reducible multitype branching Brownian
      motion}.
    \newblock {\em Electron. J. Probab.}, 26, no. 39, 2021.



    \bibitem{BBCM22}
    J. Berestycki, {\'E}.~Brunet, A.~Cortines, and B.~Mallein.
    \newblock {A simple backward construction of branching Brownian motion with
      large displacement and applications}.
    \newblock {\em  Ann. Inst. Henri Poincar{\'e}, Probab. Stat.}, 58(4):2094--2113, 2022.

    \bibitem{Biggins92}
    J. D. Biggins.
    \newblock Uniform convergence of martingales in the branching random walk.
    \newblock {\em Ann. Probab.}, 20(1):137--151, 1992.

    \bibitem{Biggins10}
    J. D. Biggins.
    \newblock Branching out.
    \newblock In {\em Probability and mathematical genetics. Papers in honour of
      Sir John Kingman}, pages 113--134. Cambridge: Cambridge University Press,
      2010.

    \bibitem{Biggins12}
    J. D. Biggins.
    \newblock {Spreading speeds in reducible multitype branching random walk}.
    \newblock {\em Ann. Appl. Probab.}, 22(5):1778--1821, 2012

    \bibitem{BL16}
    M. Biskup and O.~Louidor.
    \newblock Extreme local extrema of two-dimensional discrete {Gaussian} free
      field.
    \newblock {\em Commun. Math. Phys.}, 345(1):271--304, 2016.

    \bibitem{BL18}
    M. Biskup and O. Louidor.
    \newblock Full extremal process, cluster law and freezing for the
      two-dimensional discrete {G}aussian free field.
    \newblock {\em Adv. Math.}, 330:589--687, 2018.

    \bibitem{BS96}
    A. Borodin and P. Salminen.
    \newblock {\em Handbook of Brownian motion--facts and formulae}.
    \newblock Probab. Appl.  Birkh{\"a}user, Basel, 1996.

    \bibitem{BH14}
    A. Bovier and L. Hartung.
    \newblock The extremal process of two-speed branching {Brownian} motion.
    \newblock {\em Electron. J. Probab.}, 19, no. 18, 2014.


    \bibitem{BH15}
    A. Bovier and L. Hartung.
    \newblock Variable speed branching {Brownian} motion. {I}: {Extremal} processes
      in the weak correlation regime.
    \newblock {\em ALEA, Lat. Am. J. Probab. Math. Stat.}, 12(1):261--291, 2015.


\bibitem{BH17}
    A. Bovier and L. Hartung.
    \newblock Extended convergence of the extremal process of branching Brownian motion.
    \newblock {\em Ann. Appl. Probab.}, 27(3):1756--1777, 2017.




    \bibitem{BH20}
    A. Bovier and L. Hartung.
    \newblock From 1 to 6 : a finer analysis of perturbed branching {Brownian}
      motion.
    \newblock {\em Commun. Pure Appl. Math.}, 73(7):1490--1525, 2020.

    \bibitem{BI04a}
    A. Bovier and I. Kurkova.
    \newblock Derrida's generalised random energy models 1 : models with finitely
      many hierarchies.
    \newblock {\em Ann. Inst. Henri Poincar{\'e}, Probab. Stat.},
    40(4):439--480, 2004.


    \bibitem{BI04b}
    A. Bovier and I. Kurkova.
    \newblock Derrida's generalized random energy models 2 : models with continuous
      hierarchies.
    \newblock {\em Ann. Inst. Henri Poincar{\'e}, Probab. Stat.},
      40(4):481--495, 2004.

    \bibitem{Bramson78}
    M. Bramson.
    \newblock Maximal displacement of branching {B}rownian motion.
    \newblock {\em Comm.  Pure  Appl. Math.}, 31(5):531--581, 1978.

    \bibitem{Bramson83}
    M. Bramson.
    \newblock {\em Convergence of solutions of the {Kolmogorov} equation to
      travelling waves}, volume 285 of {\em Mem. Am. Math. Soc.}
    \newblock Providence, RI: American Mathematical Society (AMS), 1983.

    \bibitem{BDZ16}
    M. Bramson, J. Ding, and O. Zeitouni.
    \newblock Convergence in law of the maximum of the two-dimensional discrete
      {G}aussian free field.
    \newblock {\em Comm. Pure Appl. Math.}, 69(1):62--123, 2016.


    \bibitem{BZ12}
    M. Bramson and O. Zeitouni.
    \newblock Tightness of the recentered maximum of the two-dimensional discrete
      {Gaussian} free field.
    \newblock {\em Comm. Pure Appl. Math.}, 65(1):1--20, 2012.

     \bibitem{BD11}
    \'{E}. Brunet and B. Derrida.
    \newblock  A branching random walk seen from the tip.
    \newblock{\em J Stat Phys} 143, 420–446, 2011 

    
    \bibitem{CHL19}
    A. Cortines, L. Hartung and O.Louidor.
    \newblock The structure of extreme level sets in branching {Brownian} motion
    \newblock {\em Ann. Probab.}, 47(4):2257 -- 2302, 2019. 

    \bibitem{DRZ17}
    J. Ding, R. Roy, and O. Zeitouni.
    \newblock Convergence of the centered maximum of log-correlated {Gaussian}
      fields.
    \newblock {\em Ann. Probab.}, 45(6A):3886--3928, 2017.


    \bibitem{Durrett19}
    R. Durrett.
    \newblock {\em Probability--theory and examples.}, 5th edition edition, {\em Camb.
      Ser. Stat. Probab. Math.}, 49,
    \newblock Cambridge University Press, Cambridge, 2019.

    \bibitem{FZ12b}
    M. Fang and O. Zeitouni.
    \newblock Branching random walks in time inhomogeneous environments.
    \newblock {\em Electron. J. Probab.}, 17, no. 67, 2012.


    \bibitem{FZ12}
    M. Fang and O. Zeitouni.
    \newblock Slowdown for time inhomogeneous branching {Brownian} motion.
    \newblock {\em J. Stat. Phys.}, 149(1):1--9, 2012.

    \bibitem{Holzer14}
    M. Holzer.
    \newblock Anomalous spreading in a system of coupled {Fisher}-{KPP} equations.
    \newblock {\em Phys. D}, 270:1--10, 2014.

    \bibitem{Holzer16}
    M. Holzer.
    \newblock A proof of anomalous invasion speeds in a system of coupled
      {Fisher-KPP} equations.
    \newblock {\em Discrete Conin.  Dyn. Syst.},
     36(4):2069--2084,
      2016.

    \bibitem{HRS23}
    H. Hou, Y.-X. Ren, and R. Song.
    \newblock Extremal process for irreducible multitype branching {B}rownian motion, \newblock {\em  arXiv:2303.12256},  2023.

    \bibitem{KS15}
    N. Kistler and M. Schmidt.
    \newblock {From Derrida's random energy model to branching random walks: from 1
      to 3}.
    \newblock {\em Electron. Comm. Probab.}, 20:1--12, 2015.



    \bibitem{Kyprianou03}
    A. E. Kyprianou.
    \newblock Travelling wave solutions to the {K}-{P}-{P} equation: alternatives
      to {Simon} {Harris}' probabilistic analysis.
    \newblock {\em Ann. Inst. Henri Poincar{\'e}, Probab. Stat.}, 40(1):53--72,
      2004.


      \bibitem{MR23}
      H. Ma. and Y.-X. Ren.
      \newblock Double jump in the maximum of two-type reducible branching Brownian motion.
      \newblock {\em arXiv:2305.09988v2}, 2023.

  
      \bibitem{MR24}
      H. Ma. and Y.-X. Ren.
      \newblock From 0 to 3: Intermediate phases between normal and anomalous spreading of two-type branching Brownian motion.
      \newblock {\em arXiv:2312.13595v2}, 2024.
 


    \bibitem{LS87}
    S. P. Lalley and T. Sellke.
    \newblock {A conditional limit theorem for the frontier of a branching Brownian
      motion}.
    \newblock {\em  Ann.  Probab.}, 15(3):1052 -- 1061, 1987.

    \bibitem{Madaule15}
    T. Madaule.
    \newblock Maximum of a log-correlated {Gaussian} field.
    \newblock {\em Ann. Inst. Henri Poincar{\'e}, Probab. Stat.}, 51(4):1369--1431,
      2015.

    \bibitem{Madaule16}
    T. Madaule.
    \newblock First order transition for the branching random walk at the critical
      parameter.
    \newblock {\em Stochastic Process. Appl.}, 126(2):470--502, 2016.

    \bibitem{Madaule17}
    T. Madaule.
    \newblock Convergence in law for the branching random walk seen from its tip.
    \newblock {\em J. Theor. Probab.}, 30(1):27--63, 2017.

    \bibitem{MZ16}
    P. Maillard and O.~Zeitouni.
    \newblock Slowdown in branching {Brownian} motion with inhomogeneous variance.
    \newblock {\em Ann. Inst. Henri Poincar{\'e}, Probab. Stat.}, 52(3):1144--1160,
      2016.

    \bibitem{Mallein15b}
      B. Mallein.
      \newblock Maximal displacement in a branching random walk through interfaces.
      \newblock {\em Electron. J. Probab.}, 20, no. 68, 2015.


    \bibitem{Mallein15}
    B. Mallein.
    \newblock {Maximal displacement in the $d$-dimensional branching Brownian
      motion}.
    \newblock {\em Electron. Commun. Probab.}, 20, no. 76, 2015.


    \bibitem{Pain18}
    M. Pain.
    \newblock {The near-critical Gibbs measure of the branching random walk}.
    \newblock {\em Ann. Inst. Henri Poincar{\'e}, Probab. Stat.}, 54(3):1622--1666, 2018.

    \bibitem{RYS14}
    Y.-X. Ren and T. Yang.
    \newblock Multitype branching {Brownian} motion and traveling waves.
    \newblock {\em Adv. Appl. Probab.}, 46(1):217--240, 2014.


    \bibitem{RY99}
    D. Revuz and M. Yor.
    \newblock Continuous martingales and Brownian motion.
    \newblock {\em
   Grundlehren Math. Wiss., 293[Fundamental Principles of Mathematical Sciences]},
    Springer-Verlag, Berlin, 1999.
    \end{thebibliography}


\end{document}